\documentclass[12pt]{amsart}

\usepackage{geometry}
\usepackage{booktabs}
\usepackage{amsmath}
\usepackage{amssymb}
\usepackage{amsthm}
\usepackage{amsfonts, dsfont}
\usepackage{mathrsfs} 
\usepackage{algorithm}
\usepackage{algorithmic}
\usepackage{paralist}
\usepackage{graphics} 
\usepackage{epsfig} 
\usepackage{graphicx}  
\usepackage{epstopdf}
\usepackage{epstopdf}
\usepackage{verbatim}
\usepackage{mathrsfs}
\usepackage{mathtools}
\usepackage{pstricks}
\usepackage{relsize}
\usepackage{tikz}
\usetikzlibrary{matrix}
\usepackage{subcaption}
\usepackage{pgfplots}
\usepackage{fixltx2e}
\usepackage{enumitem}
\usepackage{bm}
\usepackage{upgreek}
\usepackage{url}
\usepackage[colorlinks=true]{hyperref}
\hypersetup{urlcolor=blue, linkcolor=blue, citecolor=red}
\usepackage{cleveref}
\usepackage{bm}
\usepackage{appendix}

\usepackage{tcolorbox} 

\allowdisplaybreaks

\newcommand{\D}[1]{\mbox{\rm #1}} 
\newcommand{\dd}{\D{d}}

\definecolor{ao(english)}{rgb}{0.0, 0.5, 0.0}

\DeclareMathOperator*{\argmin}{argmin}

\usepackage[abbrev]{amsrefs}

\usepackage{amssymb}

\numberwithin{equation}{section}

\newtheorem{theorem}{Theorem}[section]
\newtheorem{lemma}[theorem]{Lemma}
\newtheorem{assumption}[theorem]{Assumption}

\newtheorem{proposition}[theorem]{Proposition}
\newtheorem{remark}[theorem]{Remark}
\newtheorem{definition}[theorem]{Definition}

\newtheorem{ex}{Example}[section]

\definecolor{ForestGreen}{RGB}{34,139,34}
\definecolor{ao(english)}{rgb}{0.0, 0.5, 0.0}

\begin{document}

\title[Particle method for optimization in Hilbert spaces]
{A derivative-free particle method for   optimization in Hilbert spaces}
\thanks{
}

\author{Hui Huang}
\address{Hui Huang \newline
Hunan University, 
School of Mathematics, \newline
Changsha, China
}
\email{\texttt{huihuang1@hnu.edu.cn}}

\author{Hicham Kouhkouh}
\address{Hicham Kouhkouh \newline 
University of Graz, 
Department of Mathematics and Scientific Computing -- NAWI, \newline
Graz, Austria
}
\email{\texttt{hicham.kouhkouh@uni-graz.at}}

\date{\today}

\begin{abstract}
We introduce a stochastic interacting particle system in separable Hilbert spaces together with its associated mean-field formulation. The model is shown to retain the characteristic consensus-driven structure of classical Consensus-Based Optimization, while accounting for the analytical challenges of infinite-dimensional dynamics. We establish well-posedness of the proposed dynamics and analyze the associated consensus mechanism. Furthermore, we derive convergence guarantees under suitable assumptions on the objective functional, showing concentration of the dynamics toward the minimizer in the long-time regime. This extends the applicability of the method to a broad class of infinite-dimensional optimization problems. 
In addition, we study the corresponding finite-particle system relevant for numerical implementation and propose a practical algorithm.
\end{abstract}

\subjclass[MSC]{65C35, 90C26, 90C56}

\keywords{Consensus-Based Optimization, Infinite-dimensional optimization, Hilbert space,  Convergence analysis,  Nonconvex optimization}

\maketitle

\section{Introduction}

In recent years, derivative-free optimization methods have attracted increasing attention in the context of large-scale and non-convex optimization problems arising in machine learning, physics, and applied mathematics. Among these approaches, Consensus-Based Optimization (CBO) \cite{pinnau2017consensus,carrillo2018analytical} has emerged as a particularly successful particle-based method. Motivated by collective interaction mechanisms observed in social and biological systems, CBO employs an ensemble of interacting agents to explore the search landscape and progressively concentrate around the global minimizer. Its robustness in the absence of gradient information has led to a rapidly growing literature addressing both its theoretical foundations and a wide range of extensions and applications \cite{ha2022stochastic,borghi2022consensus,carrillo2022consensus,carrillo2021consensus,fornasier2022anisotropic,fornasier2020consensus,fornasier2024truncated,byeon2025consensus,wei2025consensus,bonandin2025consensus,cipriani2022zero,huang2024fast,huang2023global,roith2025consensus}.

The CBO algorithm is distinguished by its derivative-free nature and its strong amenability to rigorous mathematical analysis \cite{fornasier2024consensus,huang2025faithful}. Consequently, the method has been extensively developed and adapted to a wide array of complex settings. Notable extensions include multiple-minimizer problems \cite{bungert2025polarized}, saddle point problems \cite{huang2024consensus,borghi2024particle}, multiplayer games \cite{chenchene2025consensus}, 
stochastic optimization \cite{bellavia2025discrete}, multi-level \cite{herty2025multiscale,garcia2025defending} and multi-objective optimization \cite{borghi2023adaptive}, clustered federated learning \cite{carrillo2023fedcbo}, as well as constrained optimization \cite{bungert2025mirrorcbo,beddrich2026constrained,borghi2023constrained}. Further advancements encompass jump diffusions \cite{kalise2023consensus,aceves2026consensus}, momentum acceleration \cite{chen2022consensus}, discrete-time formulations \cite{ha2024time,ha2020convergence,ko2022convergence}, and memory-based dynamics \cite{riedl2024leveraging,huang2024self,totzeck2020consensus}. Rather than providing an exhaustive account of this rapidly expanding field, we refer the interested reader to the survey by Totzeck \cite{totzeck2021trends} and more recent comprehensive reviews in \cite{fornasier2026consensus} for a broader perspective.

While the finite-dimensional Euclidean setting has been extensively studied, many problems arising in modern applications are inherently infinite-dimensional. Examples include variational problems, optimal control, and optimization constrained by partial differential equations. Extending particle-based optimization methods such as CBO to such settings is therefore both natural and necessary, but introduces substantial analytical challenges due to the lack of compactness and the infinite-dimensional structure of the underlying state space. Recent work has nevertheless generalized CBO beyond classical Euclidean spaces to genuinely infinite-dimensional settings. For instance, \cite{khatab2026consensus} employs Gaussian process representations within a CBO framework for optimization in Sobolev spaces, whereas \cite{borghi2026variational} formulates CBO on the manifold of Gaussian probability measures endowed with a Bures-Wasserstein geometry for variational inference. These approaches extend the optimization framework itself to non-classical and potentially infinite-dimensional spaces. However, none of these works directly formulate the CBO dynamics as a continuous-time Stochastic Differential Equation (SDE) in an infinite-dimensional setting. While these approaches provide a highly flexible algorithmic framework, a functional-analytic foundation for infinite-dimensional CBO remains an open challenge, specifically regarding the well-posedness of the underlying dynamics and the global convergence guarantees. To bridge this gap, our paper introduces and establishes a general SDE framework for CBO in separable Hilbert spaces.

Besides that, CBO has also been applied to problems whose underlying formulation is infinite-dimensional in nature, while the numerical optimization is still performed in a finite-dimensional parameter space. Examples include stochastic control problems addressed in \cite{lyu2025consensus} through gradient-free optimization of parameterized policies, as well as model predictive control formulations in \cite{borghi2025model} where CBO serves as an optimization engine. Other developments have further broadened the applicability of CBO as in \cite{beddrich2026constrained}.

In this manuscript, we consider the following optimization problem
\begin{equation*}
    \text{Find } x^* \in \argmin_{x \in H} \mathscr{E}(x),
\end{equation*}
where $H$ is a separable Hilbert space and $\mathscr{E}:H\to\mathbb{R}$ is a possibly non-convex and non-smooth functional.

To this end, we develop a derivative-free optimization framework in a separable Hilbert space setting, extending the classical CBO methodology from finite-dimensional Euclidean spaces to infinite-dimensional function spaces. We introduce an appropriate particle system and corresponding mean-field formulation in this context, and analyze its well-posedness and asymptotic behavior. In particular, we show that the proposed dynamics retain the characteristic concentration mechanism of CBO and establish convergence properties under suitable assumptions on the objective functional.

\subsection{Motivation}

The Hilbert space formulation introduced above naturally encompasses a broad class of problems arising in applications. We now present several classical examples that illustrate this infinite-dimensional setting and motivate the development of the proposed method. We refer to Appendix \ref{app:examples of H} for several examples of separable Hilbert spaces. 

\paragraph{\textbf{Example 1} (Stochastic Optimization)}

Let $U$ be a separable Hilbert space and $(\Omega,\mathcal{F},\mathbb{P})$ a probability space. We consider a stochastic optimization problem of the form
\[
    \min_{u \in U} \; \mathbb{E}\big[ J(u, \xi) \big],
\]
where $\xi$ is a random input and $U$ is a set of admissible controls.

A natural Hilbert space setting is obtained by lifting the control variable $u \in U$ to a random variable $\mathbf{u} : \Omega \longrightarrow U$, and identifying admissible controls with the Bochner space
\[
    \mathscr{U} := L^2(\Omega; U) = \left\{ \mathbf{u} : \Omega \longrightarrow U  \text{measurable} \;:\; \mathbb{E}[\,\|\mathbf{u}\|_U^2\,] < \infty \right\}.
\]
In this setting, the stochastic optimization problem can be written as
\[
    \min_{\mathbf{u} \in \mathscr{U}} \; \mathbb{E}\big[ J(\mathbf{u}(\omega), \xi(\omega)) \big] = \min_{\mathbf{u} \in L^2(\Omega;U)} \int_{\Omega} J(\mathbf{u}(\omega), \xi(\omega)) \, \mathrm{d}\mathbb{P}(\omega).
\]
Equivalently, $L^2(\Omega;U)$ is a Hilbert space equipped with inner product
\[
    \langle \mathbf{u}, \mathbf{v} \rangle_{L^2(\Omega;U)} = \mathbb{E}\big[\langle \mathbf{u}(\omega), \mathbf{v}(\omega)\rangle_U\big].
\]
In this formulation, randomness is absorbed into the state space, and the original stochastic problem becomes a variational problem on the infinite-dimensional Hilbert space $L^2(\Omega;U)$.
This allows to rewrite the stochastic optimization problem in the form
\[
    \min_{\mathbf{u} \in H} \; \mathscr{E}(\mathbf{u}), \quad \text{ where } H := L^2(\Omega; U), 
\]
and the energy functional $\mathscr{E}: H \to \mathbb{R}$ is defined by
\[
    \mathscr{E}(\mathbf{u}) := \mathbb{E}\big[ J(\mathbf{u}(\omega), \xi(\omega)) \big].
\]
In this sense, stochastic optimization can be interpreted as a deterministic variational problem posed on an infinite-dimensional Hilbert space of random variables.

\paragraph{\textbf{Example 2} (Optimal Control)}

We consider the following optimal control problem for an ODE
\begin{equation*}
\begin{aligned}
    & \min\limits_{u \in L^2(0,T)} J(y,u) := \frac{1}{2} \int_0^T \big(|y(t)|^2 + \alpha |u(t)|^2 \big)\,\dd t, \quad  \text{ for some } \alpha > 0,\\
    & \text{subject to: }\; y'(t) = y(t) + u(t), \quad t \in (0,T), \text{ and } y(0) = y_0.
\end{aligned}
\end{equation*}
We choose the Hilbert space $H := L^2(0,T)$, with inner product
\[
    \langle u,v \rangle_H := \int_0^T u(t)v(t)\,\dd t.
\]
For each control $u \in L^2(0,T)$, the ODE admits a unique solution $y = S(u)$, where
\[
    S: L^2(0,T) \longrightarrow H^1(0,T) \subset L^2(0,T)
\]
is the control-to-state operator. 
In this example, the solution is explicitly given by
\[
    y(t) = e^t y_0 + \int_0^t e^{t-s} u(s)\,\dd s.
\]
Substituting $y = S(u)$ into the cost functional yields the reduced functional
\[
    \mathscr{E}(u) := \frac{1}{2} \int_0^T \Big(|S(u)(t)|^2 + \alpha |u(t)|^2 \Big)\,\dd t.
\]
Finally, the optimal control problem is equivalent to the minimization problem
\[
    \min_{u \in H} \mathscr{E}(u), \quad \text{where } H = L^2(0,T).
\]

An analogous formulation is available in the setting of stochastic optimal control, provided that the admissible control set is chosen appropriately to encode not only integrability requirements, but also the relevant measurability and filtration-adaptedness constraints.

\paragraph{\textbf{Example 3} (Optimal Control of PDEs)}

Let $\Omega \subset \mathbb{R}^d$ be a bounded domain. We consider the optimal control problem
\begin{equation*}
\begin{aligned}
    & \min\limits_{u \in L^2(\Omega)} J(y,u) := \frac{1}{2}\|y - y_{\text{d}}\|_{L^2(\Omega)}^2 + \dfrac{\alpha}{2}\|u\|_{L^2(\Omega)}^2, \qquad  \text{ for some } \alpha > 0,\\
    & \text{subject to: }\; -\Delta y = u \quad \text{in } \Omega,\quad \text{ and } \; y = 0 \quad \text{on } \partial\Omega.
\end{aligned}
\end{equation*}
We define the Hilbert space $H := L^2(\Omega)$, with inner product
\[
    \langle u,v \rangle_H := \int_\Omega u(x)v(x)\,\dd x.
\]

For each $u \in L^2(\Omega)$, there exists a unique weak solution $y \in H_0^1(\Omega)$ of the Poisson equation. We define the solution operator
\[
    S : L^2(\Omega) \longrightarrow H_0^1(\Omega), \quad u \longmapsto S(u):=y,
\]
where $y$ is such as
\[
    -\Delta y = u \quad \text{in } \Omega, \qquad \text{ and } \; y|_{\partial\Omega} = 0.
\]
Substituting $y = S(u)$ into the cost functional $J(\cdot,\cdot)$ yields
\[
    \mathscr{E}(u) := \frac{1}{2}\|S(u) - y_{\text{d}}\|_{L^2(\Omega)}^2 + \frac{\alpha}{2}\|u\|_{L^2(\Omega)}^2.
\]
The optimal control problem is then equivalent to the minimization problem
\[
    \min_{x \in H} \mathscr{E}(x), \quad \text{ where } H = L^2(\Omega).
\]

Analogous optimization formulations also arise in the control of more general PDEs and SPDEs. In these settings, one must define suitable admissible control spaces that incorporate the appropriate spatial and temporal regularity requirements, as well as measurability and filtration-adaptedness conditions in the stochastic case.

\paragraph{\textbf{Example 4} (Inverse Problem)}

Let $H$ be a Hilbert space (e.g.,  $H = L^2(\Omega)$), and let $A : H \to H$ be a bounded linear operator. We consider the inverse problem $Ax = b$, where only noisy data $b^\delta \in H$ is available such as $b^\delta \approx b$. We aim to recover $x \in H$ from $Ax \approx b^\delta$, which is typically ill-posed. 
To stabilize the problem, a standard approach is to introduce the Tikhonov functional
\[
    \mathscr{E}(x) := \frac{1}{2}\|Ax - b^\delta\|_H^2 + \frac{\alpha}{2}\|x\|_H^2, \qquad \text{ for some } \alpha > 0.
\]
The inverse problem is then reformulated as the unconstrained minimization problem
\[
    \min_{x \in H}\, \mathscr{E}(x).
\]
where $x \in H$ is the unknown to be reconstructed.

\paragraph{\textbf{Example 5} (Calculus of Variations)} 

Many evolution equations arising in the calculus of variations admit a natural formulation in terms of energy minimization on infinite-dimensional Hilbert spaces. Classical examples are provided by the Allen--Cahn and Cahn--Hilliard equations, both of which are associated with the same Ginzburg--Landau energy functional, but endowed with different gradient-flow structures.

Let $\Omega \subset \mathbb{R}^d$ be a bounded domain and let $\varepsilon > 0$. We define the energy functional
\[
    \mathscr{E}(u) := \int_\Omega \left( \frac{\varepsilon^2}{2} |\nabla u(x)|^2 + F(u(x)) \right)\,\dd x,
\]
where $F$ is a double-well potential, for example $F(u)=\frac{1}{4}(u^2-1)^2$.

The Allen--Cahn equation is the $L^2(\Omega)$-gradient flow associated with $\mathscr{E}$. Formally, 
\[
    \partial_t u = - \frac{\delta \mathscr{E}}{\delta u},
\]
which yields \; $\partial_t u = \varepsilon^2 \Delta u - F'(u)$. A natural Hilbert space is $H := H_0^1(\Omega)$ equipped with inner product
\[
    \langle u,v\rangle_H := \int_\Omega \nabla u \cdot \nabla v\,\dd x.
\]

On the other hand, the Cahn--Hilliard equation arises as the $H^{-1}(\Omega)$-gradient flow associated with the same energy functional $\mathscr{E}$. Formally, one has
\[
    \partial_t u = \Delta \left( \frac{\delta \mathscr{E}}{\delta u}\right),
\]
which yields \; $\partial_t u = \Delta\bigl(-\varepsilon^2 \Delta u + F'(u)\bigr)$. The natural variational setting for this equation is the Hilbert space $H := H^{-1}(\Omega)$, equipped with the inner product
\[
    \langle f,g\rangle_{H^{-1}} = \int_\Omega \nabla (-\Delta)^{-1}f \cdot \nabla (-\Delta)^{-1}g\,\dd x,
\]
where $(-\Delta)^{-1}$ denotes the inverse Laplacian with suitable boundary conditions. In this framework, the Cahn--Hilliard equation is naturally interpreted as the gradient flow of $\mathscr{E}$ with respect to the $H^{-1}$-metric.

The associated stationary problem can therefore be written in the abstract form
\[
    \min_{u \in H} \mathscr{E}(u),
\]
with $H = H_0^1(\Omega)$ in the Allen--Cahn case and $H = H^{-1}(\Omega)$ in the Cahn--Hilliard case.

This shows that both models admit a unified variational formulation in infinite-dimensional Hilbert spaces. The energy functional is the same in both cases, while the choice of the underlying Hilbert-space metric determines the corresponding gradient-flow dynamics.

\subsection{Model Formulation}\label{sec:model}

\subsubsection{The dynamics}

Let us be given $(H, \langle \cdot, \cdot \rangle_H, \|\cdot\|_H)$ a separable Hilbert space. We aim at establishing a CBO model in this infinite-dimensional setting in order to find the global minimizer $x^*\in H$ of a continuous, non-convex objective function $\mathscr{E}: H\to \mathbb{R}$. This is a derivative-free method which relies on a swarm of interacting particles that explore the space $H$ and ultimately coalesce around the global minimizer $x^*$. 
In the mean field limit, the trajectory of a single representative particle $X_t$ is governed by the following McKean-Vlasov Stochastic Differential Equation (SDE) in $H$
\begin{equation}
\label{eq:mckean_vlasov_cbo}
\begin{aligned}
    & \dd X_t = -\lambda (X_t - \mathfrak{m}_{\alpha}(\mu_t))\,\dd t + \sigma\, \mathsf{D}(X_t- \mathfrak{m}_{\alpha}(\mu_t))\,\dd W_t^{Q}, \quad X_0 = \xi\\
    \text{ where }\; & \mathfrak{m}_{\alpha}(\mu_t) = \frac{\int_{H} x\, \exp(-\alpha\, \mathscr{E}(x))\,\mu_{t}(\dd x)}{\int_{H} \exp(-\alpha\,\mathscr{E}(x))\,\mu_{t}(\dd x)} \qquad \text{ and } \qquad  \mu_t = \text{Law}(X_t).
\end{aligned}
\end{equation} 
The parameter $\lambda>0$ is the intensity of the deterministic drift toward the consensus point, and $\sigma>0$ scales the stochastic exploration. The parameter $\alpha>0$ is the inverse-temperature: when $\alpha\to \infty$, the exponential weight heavily penalizes suboptimal positions leading the measure to concentrate around the global minimizers of $\mathscr{E}$. Finally, the operator $\mathsf{D}:H \to \mathscr{L}(U,H)$ modulates the noise intensity based on the distance of $X_t$ to the consensus point $\mathfrak{m}_{\alpha}(\mu_t)$. In particular, it decays as the distance decays, causing less exploration as the system converges. The driving noise $W_t^Q$ is a $Q$-Wiener process on a Hilbert space $U$ which we define in detail next. 

Since $H$ is a separable Hilbert space, it is naturally equipped with its Borel $\sigma$-algebra. Consequently, for a Borel-measurable function $f:H\to\mathbb{R}$ and a probability measure $\mu$ on $H$, the quantity
\[
    \int_H f(x)\,\mu(\dd x)
\]
denotes the usual Lebesgue integral of $f$ with respect to $\mu$, or equivalently, the expectation of $f$ under the probability law $\mu$. 

More generally, if $f:H\to H$ is an $H$-valued measurable function satisfying
\[
    \int_H \|f(x)\|_H\,\mu(\dd x)<\infty,
\]
then the integral
\[
    \int_H f(x)\,\mu(\dd x)
\]
is understood in the sense of the Bochner integral (see, e.g., \cite[Appendix A]{liu2015stochastic} or \cite[Appendix A]{prevot2007concise}), yielding an element of $H$. This provides the natural extension of Lebesgue integration to Hilbert-space-valued functions.

\subsubsection{The noise}
\label{subsec:q_wiener}

A fundamental challenge in transitioning from $\mathbb{R}^d$ to an infinite-dimensional space $H$ is that standard Brownian motion becomes ill-defined. If one were to inject an independent noise into every orthogonal direction of $H$, the total variance would diverge to infinity, driving the dynamics out of $H$.

To preserve finite energy, the stochastic exploration must be driven by a $Q$-Wiener process, which dampens the noise. 

Let $U$ be a separable Hilbert space (for example, $U=H$). We define a covariance operator $Q\in \mathscr{L}(U,U)$ that is linear, symmetric, and positive semi-definite. We assume moreover that $Q$ is a trace-class operator, that is its trace is finite
\begin{equation*}
    \mathrm{Tr}(Q) = \sum\limits_{k=1}^{\infty} \lambda_k \, < \infty
\end{equation*}
where $(\lambda_k)_{k\in \mathbb{N}}$ is the sequence of non-negative eigenvalues of $Q$. 

The Spectral theorem for compact, self-adjoint operators guarantee existence of a complete orthonormal basis $(\mathbf{e}_k)_{k\in\mathbb{N}}$ of $U$ consistence of the eigenvectors of $Q$, such that $Q\,\mathbf{e}_k = \lambda_k\,\mathbf{e}_k$. Using this eigenbasis, we can construct the $Q$-Wiener process $W_t^Q$ via its Karhunen-Lo\`{e}ve expansion (see \cite{alexanderian2015brief})
\begin{equation*}
    W_t^Q = \sum\limits_{k=1}^{\infty}\sqrt{\lambda_k}\,\beta_{k}(t)\,\mathbf{e}_k
\end{equation*}
where $(\beta_{k}(t))_{k\in\mathbb{N}}$ is a sequence of independent, standard one-dimensional Brownian motions. The following remark provides an intuitive explanation of this construction. 

\begin{remark}
Each eigenvalue $\lambda_k$ represents the variance (or stochastic energy) injected into the spatial direction $\mathbf{e}_k$. The trace-class condition $\sum\lambda_k<\infty$ forces this variance to decay rapidly for higher dimensions, which ensures the noise remains bounded in the space $U$. Indeed, using It\^{o}'s isometry, one obtains
\begin{equation*}
    \mathbb{E}\left[\|W_t^Q\|_U^2\right] = \mathbb{E}\left[\left\| \sum_{k=1}^\infty \sqrt{\lambda_k}\, \beta_k(t)\, \mathbf{e}_k \right\|_U^2\right] = \sum_{k=1}^\infty \lambda_k \, \mathbb{E}[|\beta_k(t)|^2] = t \sum_{k=1}^\infty \lambda_k = t\, \mathrm{Tr}(Q) < \infty.
\end{equation*}
\end{remark}

We refer to Appendix \ref{app:noise} for more details.

\subsection{Analytical Challenges and Main Contributions}

Transitioning from finite-dimensional Euclidean space ($\mathbb{R}^d$) to a separable Hilbert space ($H$) introduces theoretical obstacles for particle-based optimization. In $\mathbb{R}^d$, standard CBO algorithms rely on isotropic white noise to systematically explore the objective landscape. However, in an infinite-dimensional Hilbert space, standard cylindrical Brownian motion possesses infinite trace, meaning isotropic exploratory noise would immediately drive the particle swarm out of the space $H$. 

Consequently, infinite-dimensional stochastic exploration must be driven by a $Q$-Wiener process with a trace-class covariance operator ($\mathrm{Tr}(Q) < \infty$). While this guarantees that the trajectories remain in $H$, it creates an algorithmic flaw: the variance injected into higher orthogonal frequencies must decay rapidly to zero. If the global minimizer resides in these higher frequencies, the trace-class noise lacks the requisite energy to push the swarm out of local minima, leading to premature convergence and algorithmic failure.

To resolve this tension between finite energy and exploratory viability, this manuscript introduces three primary contributions:

\begin{enumerate}
    \item \textbf{The Active Subspace and Projected Diffusion:} We introduce a geometric decomposition of the Hilbert space into a finite-dimensional active subspace $V$ and an infinite-dimensional orthogonal complement $V^\perp$. By proposing a novel Projected Isotropic Diffusion operator ($\mathsf{D}(x) = \|x\|_H \mathsf{P}_V$) to be later made precise, we explicitly filter the stochastic noise, trapping the exploratory variance within $V$ and reducing the dynamics in $V^\perp$ to a purely deterministic drift toward the consensus point. This completely insulates the optimization process from the trace-class decay of the ambient space.
    
    \item \textbf{Global Well-Posedness via Wasserstein Contraction:} Due to the non-linear, non-local nature of the consensus point in $H$, standard Picard iterations fail. We establish global well-posedness by proving that the decoupled measure-mapping operator is a strict contraction on the space of continuous probability measure flows $\mathscr{C}_T = C([0,T]; \mathscr{P}_2(V))$.
    
    \item \textbf{Strict Positivity of Mass and Global Convergence:} The core of our convergence analysis relies on a novel infinite-dimensional It\^o calculus argument. By utilizing the explicit structural properties of our projected diffusion operator, we prove that the swarm maintains a strictly positive probability mass in the neighborhood of the minimizer. This circumvents the infinite-dimensional small ball problem, allowing us to invoke a quantitative Laplace principle and guarantee global exponential convergence to the minimizer.
\end{enumerate}

\subsection{Organization}

The remainder of the paper is organized as follows. 
\textbf{Section \ref{sec:wellposedness}} is devoted to the well-posedness analysis of the dynamics, where we study in particular the properties of the consensus point and establish existence and uniqueness via a fixed-point argument. In \textbf{Section \ref{sec:convergence}}, we derive convergence guarantees for the proposed method under suitable assumptions on the objective functional. \textbf{Section \ref{sec:particle}} addresses the interacting particle system with finitely many agents, which is directly relevant for numerical implementation. In \textbf{Section \ref{sec:numerics}}, we present the resulting algorithm and illustrate its performance through numerical experiments.

Finally, \textbf{Appendix \ref{app:sde in H}} collects the necessary background on stochastic differential equations in Hilbert spaces, while \textbf{Appendix \ref{app:nonsmooth}} provides elements of nonsmooth analysis required for the infinite-dimensional calculus used throughout the paper. In \textbf{Appendix \ref{app:examples of H}}, we recall several examples of seperable Hilbert spaces that arise in analysis.

\section{Well-Posedness Analysis} 
\label{sec:wellposedness}

Let $\mathscr{P}_{2}(H)$ denote the space of probability measures on $H$ with finite second moments, equipped with the $2$-Wasserstein distance $\mathcal{W}_2$. Let $U=H$ and $Q\in \mathscr{L}(H,H)$ be the trace-class covariance operator (which is positive semi-definite and self-adjoint) of the Wiener process $W_t^Q$.

We define $\mathscr{L}_Q(H, H)$ 
as the space of linear operators $B: H_0 \to H$ where $H_0 = Q^{1/2}H$, such that $B Q^{1/2}$ is a Hilbert-Schmidt operator from $H$ to $H$. This space is equipped with the norm
\begin{equation}
    \|B\|_{\mathscr{L}_Q}^2:= \|B Q^{1/2}\|_{\text{HS}}^2=\sum_{i=1}^\infty\|B Q^{1/2}\,\mathbf{e}_i\|_H^2 = \mathrm{Tr}(B Q B^*)
\end{equation}
where we recall for $T$ a Hilbert-Schmidt operator, we have $\|T\|_{\text{HS}}^{2} := \mathrm{Tr}(TT^*)$.

To establish well-posedness of the infinite-dimensional McKean-Vlasov equation \eqref{eq:mckean_vlasov_cbo}, we require the objective function $\mathscr{E}:H\to \mathbb{R}$ to satisfy the following assumptions.

\begin{assumption}[Regularity of the Objective Function]
\label{assum:objective}
The function $\mathscr{E}$ satisfies the following properties:
\begin{enumerate}
    \item \textbf{Bounded from below:} Without loss of generality, we assume 
    \begin{equation*}
        \inf_{x \in H} \mathscr{E}(x) \geq 0 .
    \end{equation*}
    If bounded below by a negative constant, then a simple shift would leave the consensus point $\mathfrak{m}_\alpha$ invariant. 
    \item \textbf{Polynomial growth:} There exist constants $C_{\mathscr{E}} > 0$ and $p \geq 1$ such that 
    \begin{equation*}
        \mathscr{E}(x) \leq C_{\mathscr{E}} \big(1 + \|x\|_H^p\big) \qquad \forall\, x\in H.
    \end{equation*}
    \item  \textbf{Coercive with sufficient growth:} There exist constants $c, R_0 > 0$ and $p \geq 1$ such that 
       \begin{equation*}
       \mathscr{E}(x) \geq c\|x\|_H^p \quad \mbox{ for all }\quad \|x\|_H > R_0.
    \end{equation*}
    \item \textbf{Local Lipschitz continuity:} For any radius $R > 0$, there exists a constant $L > 0$ such that for all $x, y \in H$ and $p\geq 1$,
    \begin{equation*}
    \begin{aligned}
        |\mathscr{E}(x) - \mathscr{E}(y)|
        & \leq L(1+\|x\|_H^{p-1}+\|y\|_H^{p-1})\|x - y\|_H.
    \end{aligned}
    \end{equation*}
\end{enumerate}
\end{assumption}

The following result establishes global Lipschitz continuity of the main terms in the dynamics \eqref{eq:mckean_vlasov_cbo}.

\begin{proposition}
\label{prop:global_lipschitz}
Under Assumption \ref{assum:objective}, the mappings 
\begin{equation*}
    w(x) = \exp(-\alpha\, \mathscr{E}(x)) \in \mathbb{R}, \qquad \text{ and } \qquad f(x) = x\,\exp(-\alpha\, \mathscr{E}(x)) \,\in H
\end{equation*}
are globally Lipschitz continuous on $H$.
\end{proposition}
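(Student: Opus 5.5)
The plan is to combine the local Lipschitz bound (item 4 of Assumption \ref{assum:objective}) with the coercivity bound (item 3). On large balls the weight $\exp(-\alpha\mathscr{E})$ decays faster than the polynomial Lipschitz constant grows, and this compensation is what yields global bounds. For $w$ I will use the elementary inequality $|e^{-a}-e^{-b}|\le e^{-\min(a,b)}|a-b|$ for $a,b\ge 0$, which follows from the mean value theorem. Combined with item 4, it gives
\begin{equation*}
|w(x)-w(y)| \le \alpha\, e^{-\alpha\min(\mathscr{E}(x),\mathscr{E}(y))}\, L\bigl(1+\|x\|_H^{p-1}+\|y\|_H^{p-1}\bigr)\|x-y\|_H .
\end{equation*}
It then suffices to bound the prefactor $e^{-\alpha\min(\mathscr{E}(x),\mathscr{E}(y))}(1+\|x\|_H^{p-1}+\|y\|_H^{p-1})$ uniformly in $x,y$.

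The main obstacle is exactly this uniform bound. The minimum picks the smaller energy, while the polynomial factor involves both norms. So when, say, $\|x\|_H$ is small and $\|y\|_H$ is huge, the factor $\|y\|_H^{p-1}$ is not damped by $e^{-\alpha\mathscr{E}(x)}$. To avoid this I will first reduce to the case where $x$ and $y$ are close. Since $0<w\le 1$ by item 1, the function is bounded, and it is enough to prove Lipschitz continuity for pairs with $\|x-y\|_H\le 1$. For $\|x-y\|_H>1$ we simply have $|w(x)-w(y)|\le 1<\|x-y\|_H$. When $\|x-y\|_H\le 1$, both norms are comparable: $\|y\|_H\le \|x\|_H+1$. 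I then split into two cases. If $\min(\|x\|_H,\|y\|_H)\le R_0+1$, both norms are at most $R_0+2$ and the prefactor is bounded by a constant. Otherwise both points lie outside the ball of radius $R_0$, so item 3 gives $\min(\mathscr{E}(x),\mathscr{E}(y))\ge c\,\min(\|x\|_H,\|y\|_H)^p$. The prefactor is then at most $\sup_{r\ge0} e^{-\alpha c r^p}(1+2(r+1)^{p-1})<\infty$. (Alternatively, for the full mean-value argument one can bound $e^{-\alpha\mathscr{E}}$ along the segment, but the case split above suffices.)

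For $f(x)=x\,w(x)$, I will first observe that $f$ is bounded. For $\|x\|_H\le R_0$ we have $\|f(x)\|_H\le R_0$, and otherwise $\|f(x)\|_H\le \|x\|_H e^{-\alpha c\|x\|_H^p}$, which is bounded. As before, this reduces the problem to pairs with $\|x-y\|_H\le 1$. For those, I write
\begin{equation*}
f(x)-f(y)=(x-y)\,w(x)+y\,\bigl(w(x)-w(y)\bigr).
\end{equation*}
The first term is bounded by $\|x-y\|_H$ since $w\le1$. For the second term I repeat the estimate for $w$, now with the extra factor $\|y\|_H\le \min(\|x\|_H,\|y\|_H)+1$. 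The same two-case argument then bounds the prefactor by $\sup_{r\ge 0}(r+1)e^{-\alpha c r^p}(1+2(r+1)^{p-1})$ plus a constant depending on $R_0$. This is finite because of the super-polynomial decay of the exponential. Collecting constants gives global Lipschitz constants depending on $\alpha,L,c,p,R_0$.
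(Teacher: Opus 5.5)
Your proof is correct and complete. It takes a genuinely different, and more elementary, route than the paper.

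\textbf{The paper's route.} The paper works in Clarke's nonsmooth calculus. It computes $\partial f(x)\subset\{e^{-\alpha\mathscr{E}(x)}I-\alpha e^{-\alpha\mathscr{E}(x)}x\otimes\xi:\ \xi\in\partial\mathscr{E}(x)\}$ by the chain and product rules. It then applies Lebourg's mean-value theorem, extended coordinate-wise to the $H$-valued map $f$, and bounds elements of $\partial\mathscr{E}(u)$ by $L(1+\|x\|_H^{p-1}+\|y\|_H^{p-1})$. It concludes by asserting that $\bigl(1+\alpha L\|x\|_H(1+\|x\|_H^{p-1}+\|y\|_H^{p-1})\bigr)e^{-\alpha\mathscr{E}(tx+(1-t)y)}$ is bounded on $H\times H$.

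\textbf{Your route.} You need only the scalar mean value theorem for $s\mapsto e^{-\alpha s}$ and item 4 used directly. You add the splitting $f(x)-f(y)=(x-y)w(x)+y\,(w(x)-w(y))$, and a reduction to $\|x-y\|_H\le 1$ based on the boundedness of $w$ and $f$.

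\textbf{What this buys you.} First, you avoid nonsmooth calculus and the delicate vector-valued mean-value step; coordinate-wise, each coordinate comes with its own intermediate point, so one does not get a single $u$. Second, your localization handles exactly the issue you flagged as the main obstacle, which the paper's last step glosses over. For any fixed $t\in[0,1)$, taking $y=-\tfrac{t}{1-t}x$ gives $tx+(1-t)y=0$, so the paper's prefactor is unbounded as $\|x\|_H\to\infty$. Moreover, decay of the weight must come from the coercivity in item 3, not from the polynomial growth in item 2 that the paper invokes. Restricting to pairs with comparable norms and then using item 3, as you do, is what makes the prefactor bound legitimate.

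The Clarke framework would be the natural tool if one only had generalized-gradient information on $\mathscr{E}$. Since item 4 already gives a global polynomial Lipschitz estimate, your argument is both shorter and more robust here.
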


\begin{remark}
 Due to the lack of a translation-invariant Lebesgue measure in infinite-dimensional Hilbert spaces, classical ``almost everywhere'' arguments are not available. This leads us to adopt tools from non-smooth analysis to handle non-differentiability in the study of Hilbert space-valued functions. These are recalled in Appendix \ref{app:nonsmooth}.    
\end{remark}
\begin{proof}

We provide the proof for $f(x)$, as the proof for $w(x)$ follows similarly.

We shall apply results in Appendix \ref{app:nonsmooth} where $X=H$ is a Hilbert space. In particular, the duality product $\langle\,\cdot,\cdot\,\rangle$ reduces to a scalar product after using Riesz identification of $H^*$ with $H$. 

We consider the function 
\begin{equation*}
\begin{aligned}
    f:H\longrightarrow H, \qquad x\longmapsto f(x) := x\,\exp(-\alpha\, \mathscr{E}(x))
\end{aligned}
\end{equation*}
which can be seen as the product of the identity map $H\to H$ with the map $g\circ h : H\longrightarrow \mathbb{R}$ where 
\begin{equation*}
\begin{aligned}
    h:H & \longrightarrow \mathbb{R},\;
    x  \longmapsto h(x):=\mathscr{E}(x)
\end{aligned}
\qquad \text{ and } \qquad
\begin{aligned}
    g:\mathbb{R} & \longrightarrow \mathbb{R},\;
    t  \longmapsto g(t):=e^{-\alpha \,t}.
\end{aligned}
\end{equation*}
All these functions are Lipschitz near any element of their domains. Moreover, $g$ is strictly differentiable (because it is continuously differentiable)\footnote{See Definition \ref{def: cont diff} for continuously differentiable, Definition \ref{def: stric deriv} for strictly differentiable, and Proposition \ref{prop:diff} for the connection between the two.}. Therefore one obtains the generalized gradient (Definition \ref{def: general grad}) of $f$ using the chain rule (Theorem \ref{thm:Chain rule}) and the product rule (Proposition \ref{prop:product}) as follows
\begin{equation}\label{subdiff f}
    \partial f(x) \subset \left\{ e^{-\alpha \mathscr{E}(x)}I - \alpha\,e^{-\alpha\mathscr{E}(x)}x\otimes \xi\,:\; \xi \in \partial \mathscr{E}(x)\right\},
\end{equation}
where $x\otimes \xi: v\mapsto \langle \xi, v \rangle x$ is a rank-$1$ operator, and $I$ is the identity operator in $H$. 
Note that in Appendix \ref{app:nonsmooth} we have recalled the generalized gradient for functions valued in $\mathbb{R}$, whereas in our case, we have $f(x) = x\,\exp({-\alpha \,\mathscr{E}(x)}) \in H$. Those results can be extended directly, by reasoning coordinate-wise: 
\begin{equation*}
\begin{aligned}
    f(x) = \sum\limits_{i=1}^{\infty} \,e^{-\alpha \,\mathscr{E}(x)} \, \langle x, \mathbf{e}_i \rangle\, \mathbf{e}_i
\end{aligned}
\end{equation*}
where $(\mathbf{e}_i)_{i=1}^{\infty}$ is an orthonormal basis of $H$, and   $x\mapsto e^{-\alpha \,\mathscr{E}(x)} \, \langle x, \mathbf{e}_i \rangle$ is now a function from $H\to \mathbb{R}$.

Using the Mean-Value Theorem (Theorem \ref{thm:MV}), one has
\begin{equation*}
    f(x) - f(y)  \in \langle \partial f(u), x-y \rangle \quad \text{ for some } u\in [x,y],
\end{equation*}
that is
\begin{equation*}
    f(x) - f(y)  = \langle \zeta, x-y \rangle \quad \text{ for some } \zeta \in \partial f(u).
\end{equation*}
Therefore, using \eqref{subdiff f} we can express $\partial f$ with $\partial \mathscr{E}$, and get 
\begin{align*}
    \|f(x) - f(y)\| & \leq \|x-y\|\,\sup\limits_{\substack{\zeta \in \partial f(u),\\ u\in [x,y]}} \|\zeta\|\\
    & \leq \|x-y\|\,\sup\limits_{\substack{\xi \in \partial \mathscr{E}(u),\\ u\in [x,y]}} \bigg(e^{-\alpha\,\mathscr{E}(u)} + \alpha\,\|x\|\,\|\xi\|\,e^{-\alpha\,\mathscr{E}(u)}\bigg)\\
    & \leq \|x-y\|\, \left(e^{-\alpha\,\mathscr{E}(u)} + \alpha\,\|x\|\,e^{-\alpha\,\mathscr{E}(u)}\,\sup\limits_{\substack{\xi \in \partial \mathscr{E}(u),\\ u\in [x,y]}}\,\|\xi\|\,\right).
\end{align*}
Since $\mathscr{E}$ is locally Lipschitz, we have
\begin{equation*}
    \|\xi\| \leq L(1+\|x\|_H^{p-1}+\|y\|_H^{p-1}),\quad \forall \, \xi \in \partial \mathscr{E}(u),\, u\in [x,y]
\end{equation*}
hence
\begin{equation*}
    \sup\limits_{\substack{\xi \in \partial \mathscr{E}(u),\\ u\in [x,y]}}\,\|\xi\| \leq L(1+\|x\|_H^{p-1}+\|y\|_H^{p-1}),
\end{equation*}
and
\begin{equation*}
\begin{aligned}
    \|f(x) - f(y)\| & \leq \|x-y\|\, \left(e^{-\alpha\,\mathscr{E}(u)} + \alpha\,\|x\|\,e^{-\alpha\,\mathscr{E}(u)}L\,(1+\|x\|_H^{p-1}+\|y\|_H^{p-1})\,\right)\\
    & \leq \|x-y\|\, \left(1 + \alpha\,\|x\|L\,(1+\|x\|_H^{p-1}+\|y\|_H^{p-1})\,\right)e^{-\alpha\,\mathscr{E}(u)}.
\end{aligned}
\end{equation*}
It suffices now to observe that $u=t\,x+(1-t)\,y$ for some $t\in [0,1]$, and that the function 
\begin{equation*}
    (x,y)\mapsto \left(1 + \alpha\,\|x\|L\,(1+\|x\|_H^{p-1}+\|y\|_H^{p-1})\,\right)e^{-\alpha\,\mathscr{E}(t\,x + (1-t)\,y)}
\end{equation*}
is bounded for all $(x,y)\in H^{2}$ thanks to the polynomial growth of $\mathscr{E}$ and its non-negativity. This provides a global Lipschitz bound for $f$, and concludes the proof. 
\end{proof}

Our next assumptions concern the covariance operator  of the Wiener process $Q$ and the space $H$. 
We recall $Q:H\to H$ is a symmetric, positive semi-definite, and trace-class operator over $H$ (i.e., $\mathrm{Tr}(Q) < \infty$), and $H$ is an infinite-dimensional separable Hilbert space with inner product $\langle \cdot, \cdot \rangle_H$ and induced norm $\|\cdot\|_H$.

\begin{assumption}[Finite-Dimensional Active Subspace] \label{assump:active_subspace}
We assume there exists a linear subspace $V \subset H$ satisfying the following conditions:

\begin{enumerate}[label=(\roman*)]
    \item 
    The subspace $V$ has a finite dimension $d < \infty$, and the ambient space admits the orthogonal decomposition $H = V \oplus V^\perp$. Furthermore, the global minimizer $x^*$ of $\mathscr{E}$ resides in $V$, and the initial probability measure $\mu_0 = \mathrm{Law}(\xi)$ is supported entirely on $V$. 
    
    \item 
    The restriction of $Q$ to the active subspace $V$ is strictly positive-definite, that is there exists a constant $\lambda_{\min} > 0$ such that
    \begin{equation*}
        \langle v, Q v \rangle_H \geq \lambda_{\min} \|v\|_H^2 \qquad \forall\,v\in V.
    \end{equation*}
\end{enumerate}
\end{assumption}

From now on, we define the diffusion operator as 
\begin{equation}\label{def:diffusion}
\begin{aligned}
    \mathsf{D}: H & \longrightarrow \mathscr{L}(H,H) \\
    x & \longmapsto \mathsf{D}(x) := \|x\|_H \mathsf{P}_V\,,
\end{aligned}
\end{equation}
where $\mathsf{P}_V$ is the orthogonal projection onto the active subspace $V$. We recall for each $x\in H$, $\mathsf{D}(x) \in \mathscr{L}(H,H)$ is a linear operator from $H\to H$. As defined in \eqref{def:diffusion}, it is simply the projection of each element $H$ onto $V\subset H$, scaled with $\|x\|_{H}$. 

Thus we can rewrite \eqref{eq:mckean_vlasov_cbo} as  a projected isotropic diffusion model
\begin{equation}
\label{eq:mckean_vlasov_cbo_P}
  \dd X_t = -\lambda \big(X_t - \mathfrak{m}_\alpha(\mu_t)\big)\,\dd t + \sigma \|X_t - \mathfrak{m}_\alpha(\mu_t)\|\mathsf{P}_V\,\dd W_t^Q, \qquad X_0 = \xi\,. 
\end{equation}

The diffusion operator $\mathsf{D}$ defined above, which scales the noise based on the particle's distance to the consensus point, enjoys the following regularity and structure: Lipschitz continuity, linear growth, and coercivity on the the active subspace $V$. These are summarized next.

\begin{proposition}
\label{prop:diffusion}
Let Assumption \ref{assump:active_subspace} hold, and $V\subset H$ be as defined therein.  
Then the mapping $\mathsf{D}: H \to \mathscr{L}_Q(H, H)$ defined in \eqref{def:diffusion} satisfies the following properties:

\begin{enumerate}
    \item 
    For all $x, y \in H$, it holds that
    \begin{equation*}
        \|\mathsf{D}(x) - \mathsf{D}(y)\|_{\mathscr{L}_Q} \leq   L_{\mathsf{D}} \|x - y\|_H,
    \end{equation*}
    where $L_{\mathsf{D}}:=\|\mathsf{P}_V\|_{\mathscr{L}_Q}=\mathrm{Tr}(\mathsf{P}_V Q \mathsf{P}_V)^{1/2}$.
    
    \item 
    For all $x\in H$
    \begin{equation*}
        \|\mathsf{D}(x)\|_{\mathscr{L}_Q} \leq  L_{\mathsf{D}}\|x\|_H \qquad \text{ and } \qquad \mathsf{D}(0) = 0.
    \end{equation*}

    \item  
    For all $x \in H$ and all $v \in V$
    \begin{equation*}
        \|\mathsf{D}(x)^* v\|_H^2 =  \|x\|_H^2 \|v\|_H^2.
    \end{equation*}
\end{enumerate}
\end{proposition}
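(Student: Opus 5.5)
The whole argument rests on the fact that $\mathsf{D}(x)=\|x\|_H\mathsf{P}_V$ is a scalar multiple of a single fixed operator. The Hilbert--Schmidt type norm $\|\cdot\|_{\mathscr{L}_Q}$ is a norm, so it is homogeneous. The only real input is therefore the finiteness of $\|\mathsf{P}_V\|_{\mathscr{L}_Q}$, and I will establish that first.

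By definition, $\|\mathsf{P}_V\|_{\mathscr{L}_Q}^2=\mathrm{Tr}(\mathsf{P}_V Q\mathsf{P}_V^*)=\mathrm{Tr}(\mathsf{P}_V Q\mathsf{P}_V)$, since $\mathsf{P}_V$ is self-adjoint. This is finite for two independent reasons. First, $\mathsf{P}_V Q\mathsf{P}_V$ has range in $V$, which has dimension $d<\infty$ by Assumption \ref{assump:active_subspace}. Second, writing $\mathrm{Tr}(\mathsf{P}_V Q \mathsf{P}_V)=\sum_i\|Q^{1/2}\mathsf{P}_V\mathbf{e}_i\|^2$ and choosing $(\mathbf{e}_i)$ adapted to $V\oplus V^\perp$ gives
\[
\mathrm{Tr}(\mathsf{P}_V Q \mathsf{P}_V)=\sum_{i\le d}\langle \mathbf{e}_i,Q\mathbf{e}_i\rangle\le d\,\|Q\|\le \mathrm{Tr}(Q)<\infty .
\]
Hence $L_{\mathsf{D}}<\infty$, and $\mathsf{D}(x)\in\mathscr{L}_Q(H,H)$ for every $x$.

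For (2), homogeneity of the norm gives $\|\mathsf{D}(x)\|_{\mathscr{L}_Q}=\|x\|_H\,\|\mathsf{P}_V\|_{\mathscr{L}_Q}=L_{\mathsf{D}}\|x\|_H$, which is in fact an equality. Also $\mathsf{D}(0)=0\cdot\mathsf{P}_V=0$.

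For (1), linearity in the scalar factor gives $\mathsf{D}(x)-\mathsf{D}(y)=(\|x\|_H-\|y\|_H)\mathsf{P}_V$. Therefore
\[
\|\mathsf{D}(x)-\mathsf{D}(y)\|_{\mathscr{L}_Q}=\big|\|x\|_H-\|y\|_H\big|\,L_{\mathsf{D}}\le L_{\mathsf{D}}\|x-y\|_H
\]
by the reverse triangle inequality.

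For (3), the adjoint of $\mathsf{D}(x)$ in $\mathscr{L}(H,H)$ is $\|x\|_H\mathsf{P}_V^*=\|x\|_H\mathsf{P}_V$, because orthogonal projections are self-adjoint. For $v\in V$ we have $\mathsf{P}_Vv=v$, so $\|\mathsf{D}(x)^*v\|_H^2=\|x\|_H^2\|v\|_H^2$.

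No step is a real obstacle. The one point needing care is the meaning of the adjoint in (3), given that $\mathsf{D}(x)$ is also viewed as an element of $\mathscr{L}_Q$, acting on $H_0=Q^{1/2}H$. I will take the adjoint in $\mathscr{L}(H,H)$, consistent with \eqref{def:diffusion}. I also note that Assumption \ref{assump:active_subspace}(ii) is not needed for the three properties themselves. It only ensures $L_{\mathsf{D}}>0$, since $\mathrm{Tr}(\mathsf{P}_VQ\mathsf{P}_V)\ge d\lambda_{\min}$, and it is presumably what makes (3) useful later as a coercivity statement.
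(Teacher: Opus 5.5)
Your proof is correct and follows the paper's argument: you use that $\mathsf{D}(x)$ is a scalar multiple of $\mathsf{P}_V$, the reverse triangle inequality for (1), and self-adjointness of $\mathsf{P}_V$ for (3). You also add a check that $L_{\mathsf{D}}<\infty$, which the paper omits.

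There is one small slip in that check. The inequality $d\,\|Q\|\le\mathrm{Tr}(Q)$ is false in general; it fails, for instance, when $Q$ has one dominant eigenvalue and $d\ge 2$. You do not need it. The bound $\sum_{i\le d}\langle\mathbf{e}_i,Q\mathbf{e}_i\rangle\le\mathrm{Tr}(Q)$ holds directly, because the trace of a positive operator is a basis-independent sum of nonnegative terms. Alternatively, the bound $\le d\,\|Q\|$ alone already gives finiteness.
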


\begin{proof} 
We prove each statement separately. 

\textbf{Step 1.} \textit{(Lipschitz continuity)} 
Let $x, y \in H$ be arbitrary. We evaluate the norm
\begin{align*}
    \|\mathsf{D}(x) - \mathsf{D}(y)\|_{\mathscr{L}_Q} = \big\| \|x\|_H \mathsf{P}_V - \|y\|_H \mathsf{P}_V \big\|_{\mathscr{L}_Q}\leq \|\mathsf{P}_V\|_{\mathscr{L}_Q}\, \|x - y\|_H
\end{align*}
The Lipschitz constant being $\|\mathsf{P}_V\|_{\mathscr{L}_Q}= \mathrm{Tr}(\mathsf{P}_V Q \mathsf{P}_V)^{1/2}$ completes the proof of the first property.

\textbf{Step 2.} \textit{(Linear growth)} 
The linear growth follows directly from the global Lipschitz continuity proven in Step 1 by setting $y = 0$. The operator vanishing at the origin is straightforward: $\mathsf{D}(0) = \|0\|_H \mathsf{P}_V =0.$

\textbf{Step 3.} \textit{(Coercivity in $V$)} 
We first determine the adjoint of the diffusion operator, $\mathsf{D}(x)^*$. Because $\mathsf{P}_V$ is an orthogonal projection onto a closed subspace, it is self-adjoint by definition ($\mathsf{P}_V^* = \mathsf{P}_V$). Since $\|x\|_H$ is a scalar, the entire operator is self-adjoint
\[
    \mathsf{D}(x)^* = \big( \|x\|_H \mathsf{P}_V \big)^* = \|x\|_H \mathsf{P}_V^* = \|x\|_H \mathsf{P}_V.
\]
Let $v \in V$ be an arbitrary direction vector in the active subspace. By the definition of an orthogonal projection, $\mathsf{P}_V$ acts as the identity on elements already inside $V$, so $\mathsf{P}_V v = v$. Applying the adjoint operator to $v$ gives
\[
    \mathsf{D}(x)^* v = \|x\|_H (\mathsf{P}_V v) = \|x\|_H v.
\]
Finally, we compute the squared norm
\[
    \|\mathsf{D}(x)^* v\|_H^2 = \big\| \|x\|_H v \big\|_H^2 = \|x\|_H^2 \|v\|_H^2\,.
\]
\end{proof}

Based on the previous setting, we can now state the main existence and uniqueness result for the infinite-dimensional CBO dynamics. 

By a strong solution, we mean an $H$-valued, predictable stochastic process $X_t$ with continuous paths almost surely, which satisfies \eqref{eq:mckean_vlasov_cbo_P} $\mathbb{P}$-almost surely for all $t \in [0,T]$. See Definition \ref{def:solution in H} and Remark \ref{rmk:uniq} in Appendix \ref{app:sde in H}. 

\begin{theorem}[Well-Posedness in $H$]
\label{thm:well_posedness}
Let $T > 0$ be a fixed time horizon. Suppose Assumptions \ref{assum:objective} and \ref{assump:active_subspace} hold. Let the initial condition $\xi$ be an $\mathcal{F}_0$-measurable $H$-valued random variable such that $\mathbb{E}[\|\xi\|_H^2] < \infty$. 
Then, the McKean-Vlasov SDE \eqref{eq:mckean_vlasov_cbo_P} admits a unique strong solution $X \in L^2\big(\Omega; C([0,T]; H)\big)$. 
\end{theorem}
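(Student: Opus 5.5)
We prove the theorem by a fixed-point argument in the space of measure flows, in the spirit of Sznitman's method for McKean--Vlasov equations. The paper's contributions list indicates that the contraction should be set on $\mathscr{C}_T = C([0,T];\mathscr{P}_2(H))$. We may also work with $\mathscr{P}_2(V)$ when $\xi$ is $V$-valued, since the drift $-\lambda(X-\mathfrak m)$ and the noise $\mathsf{P}_V\,\dd W^Q$ keep $V$ invariant as soon as $\mathfrak m\in V$.

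\textbf{Step 1: frozen equation.} Fix a flow $\nu\in\mathscr{C}_T$ and set $m_t:=\mathfrak{m}_\alpha(\nu_t)$. Consider the linear-coefficient SDE
\[
\dd X_t=-\lambda(X_t-m_t)\,\dd t+\sigma\|X_t-m_t\|\mathsf{P}_V\,\dd W^Q_t,\qquad X_0=\xi .
\]
First we show that $t\mapsto m_t$ is continuous and bounded on $[0,T]$. The denominator $\int w\,\dd\nu_t$ is bounded below: since $w\ge e^{-\alpha C_{\mathscr E}(1+R^p)}$ on balls, Markov's inequality with the uniform second moment bound gives positive mass on a fixed ball. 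Continuity of the numerator and denominator then follows from the global Lipschitz property of $f$ and $w$ (Proposition~\ref{prop:global_lipschitz}), together with $|\int g\,\dd\mu-\int g\,\dd\nu|\le \mathrm{Lip}(g)\,\mathcal W_1(\mu,\nu)\le \mathrm{Lip}(g)\,\mathcal W_2(\mu,\nu)$.

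With $m$ fixed, the drift is globally Lipschitz in $x$. By Proposition~\ref{prop:diffusion}, the diffusion $x\mapsto \sigma\mathsf D(x-m_t)$ is globally Lipschitz into $\mathscr L_Q$ with linear growth. The standard well-posedness theory for SDEs in Hilbert spaces (Appendix~\ref{app:sde in H}) therefore yields a unique strong solution $X^\nu\in L^2(\Omega;C([0,T];H))$. The bound $\mathbb E\sup_t\|X^\nu_t\|^2\le C(1+\mathbb E\|\xi\|^2+\sup_t\|m_t\|^2)$ follows from the BDG inequality and Gronwall's lemma. Path continuity together with this bound shows $\mathcal T\nu:=(\mathrm{Law}(X^\nu_t))_t\in\mathscr C_T$.

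\textbf{Step 2: stability estimate for $\mathfrak m_\alpha$ (main obstacle).} We need
\[
\|\mathfrak m_\alpha(\mu)-\mathfrak m_\alpha(\nu)\|_H\le C_M\,\mathcal W_2(\mu,\nu)
\]
for $\mu,\nu$ with second moments bounded by $M$. The argument writes the difference of the two quotients, uses Lipschitz continuity of $f$ and $w$ on the numerators, and controls the denominators from below as in Step 1. It also needs a uniform bound on $\|\int f\,\dd\mu\|$, which holds because $\|f(x)\|=\|x\|e^{-\alpha\mathscr E(x)}$ is bounded by the coercivity in Assumption~\ref{assum:objective}.

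This estimate is only local, through $M$, so we must restrict $\mathcal T$ to an invariant set. The approach is to show a priori that $\sup_t\|\mathfrak m_\alpha(\nu_t)\|$ is bounded uniformly in $\nu$. Coercivity should give a bound of the type $\|\mathfrak m_\alpha(\mu)\|^2\le b_1+b_2\int\|x\|^2\,\dd\mu$, as in the finite-dimensional CBO theory. Alternatively, one can use boundedness of $f$ and a denominator bound depending on the second moment. Either route gives a closed ball $\{\sup_t\int\|x\|^2\,\dd\nu_t\le K\}$ that $\mathcal T$ maps into itself, for $K$ large.

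\textbf{Step 3: contraction and conclusion.} For $\nu,\tilde\nu$ in the ball, couple $X^\nu$ and $X^{\tilde\nu}$ with the same $\xi$ and the same $W^Q$. Itô's formula for $\|X^\nu_t-X^{\tilde\nu}_t\|^2$ and the Lipschitz bound of Proposition~\ref{prop:diffusion}(1) give
\[
\mathbb E\|X^\nu_t-X^{\tilde\nu}_t\|^2\le C\int_0^t\Big(\mathbb E\|X^\nu_s-X^{\tilde\nu}_s\|^2+\|m_s-\tilde m_s\|^2\Big)\,\dd s .
\]
Gronwall's lemma and Step 2 then yield
\[
\mathcal W_2^2\big((\mathcal T\nu)_t,(\mathcal T\tilde\nu)_t\big)\le C\int_0^t\mathcal W_2^2(\nu_s,\tilde\nu_s)\,\dd s .
\]
Iterating, $\mathcal T^k$ is a contraction for large $k$; equivalently, $\mathcal T$ is a contraction under the weighted metric $\sup_t e^{-\beta t}\mathcal W_2(\nu_t,\tilde\nu_t)$. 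The ball is complete, so Banach's fixed point theorem gives a unique fixed point $\nu^*$. The process $X^{\nu^*}$ solves~\eqref{eq:mckean_vlasov_cbo_P}.

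For uniqueness, any strong solution $X$ has a law flow in $\mathscr C_T$ satisfying the a priori bound, hence lying in the ball. That flow is therefore a fixed point of $\mathcal T$, so it equals $\nu^*$, and pathwise uniqueness for the frozen equation gives $X=X^{\nu^*}$.
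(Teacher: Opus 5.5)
The gap is the last claim of your Step 2, namely that the uniform-in-time ball $\{\nu:\sup_{t\le T}\int\|x\|_H^2\,\dd\nu_t\le K\}$ is mapped into itself by $\mathcal T$ once $K$ is large.

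Your frozen estimate bounds the output by $C_T(\mathbb E\|\xi\|_H^2+\sup_t\|m_t\|_H^2)$. With a linear bound $\|m_t\|_H^2\le b_1+b_2K$, invariance therefore needs $C_Tb_2<1$. That is a smallness condition on $T$; taking $K$ large does not help. It is also not an artefact of the estimate. For a constant flow $\nu_t\equiv\mu$ one has $\mathbb E X^\nu_t=e^{-\lambda t}\mathbb E\xi+(1-e^{-\lambda t})\mathfrak m_\alpha(\mu)$, so $\mathbb E\|X^\nu_t\|_H^2\ge\|\mathbb E X^\nu_t\|_H^2\to\|\mathfrak m_\alpha(\mu)\|_H^2$, whatever $\sigma$ is. Assumption~\ref{assum:objective} allows $\mathscr E$ to be of size $C_{\mathscr E}\|x\|_H^p$ along one direction and $c\|x\|_H^p$ along another. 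Take $\mu=(1-\epsilon)\delta_y+\epsilon\delta_z$ with $\|z\|=\theta\|y\|$ and $1<\theta<(C_{\mathscr E}/c)^{1/p}$. The Gibbs weight then sits almost entirely on $z$, and $\|\mathfrak m_\alpha(\mu)\|_H^2$ exceeds the second moment $K$ of $\mu$ by a fixed factor, for every large $K$.

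Your alternative route is worse. The available lower bound on $Z(\mu)$ only gives $\|\mathfrak m_\alpha(\mu)\|_H\le 2\|f\|_\infty\exp(\alpha\sup_{\|x\|_H\le\sqrt{2K}}\mathscr E(x))$, which grows superlinearly in $K$, so that route closes only on short time intervals. Without an invariant set whose second moments are uniformly bounded, the constant $L_{\mathfrak m}$ from Step 2 is not uniform, and Step 3 cannot be run on $[0,T]$.

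The paper never asks for invariance on $[0,T]$. It fixes $R^2>\mathbb E\|\xi\|_H^2$ and takes $T_0$ small so that $\mathscr C_{T_0,R}$ is invariant (Lemma~\ref{lem:invariance}) and $\Phi$ contracts there (Lemma~\ref{lem:contraction}); it then restarts on successive intervals. To keep your global route you need two repairs.

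\emph{First, actually prove the linear bound.} It does hold under Assumption~\ref{assum:objective} with the same exponent $p$ in items (2)--(3). By Jensen, $\|\mathfrak m_\alpha(\mu)\|_H^2\le\int\|x\|_H^2\,\dd\tilde\mu$ with $\tilde\mu\propto e^{-\alpha\mathscr E}\mu$. For $\|x\|_H>R_0$ one has $\|x\|_H^2\le(\mathscr E(x)/c)^{2/p}$. Since $s\mapsto(s/c)^{2/p}$ is nondecreasing and $s\mapsto e^{-\alpha s}$ is nonincreasing, Chebyshev's association inequality for the real variable $\mathscr E(X)$, $X\sim\mu$, gives $\int(\mathscr E/c)^{2/p}\,\dd\tilde\mu\le\int(\mathscr E/c)^{2/p}\,\dd\mu\le b\,(1+\int\|x\|_H^2\,\dd\mu)$.

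\emph{Second, replace the ball by the time-weighted set} $\{\nu:\int\|x\|_H^2\,\dd\nu_t\le Ke^{\beta t},\ t\in[0,T]\}$. Suppose $\frac{\dd}{\dd t}\mathbb E\|X^\nu_t\|_H^2\le C\,\mathbb E\|X^\nu_t\|_H^2+C'\|m_t\|_H^2$, and write $M_0=\mathbb E\|\xi\|_H^2$. Gronwall gives $\mathbb E\|X^\nu_t\|_H^2\le e^{Ct}(M_0+C'b_1/C)+C'b_2K\,(e^{\beta t}-e^{Ct})/(\beta-C)$. This is at most $Ke^{\beta t}$ whenever $\beta\ge C+2C'b_2$ and $K\ge 2(M_0+C'b_1/C)$. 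The set is closed in $\mathscr C_T$, and all its measures have second moment at most $Ke^{\beta T}$. Hence $L_{\mathfrak m}$ is uniform on it, and your Step 3 goes through, via the weighted metric or iterates of $\mathcal T$.

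The same linear bound gives the a priori estimate $\mathbb E\|X_t\|_H^2\le(M_0+C'b_1T)e^{(C+C'b_2)T}$ for any genuine solution, because there $\mu_t=\mathrm{Law}(X_t)$ and Gronwall closes. This is exactly what a short-time restart argument needs to cover $[0,T]$ with steps of uniform length. Uniqueness is unaffected: two solutions in $L^2(\Omega;C([0,T];H))$ already have uniformly bounded second moments. You can therefore couple them directly with the local Lipschitz bound, without passing through the ball.
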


The proof of Theorem \ref{thm:well_posedness} relies on a measure-freezing (decoupling) argument on the Wasserstein space $C([0,T]; \mathscr{P}_2(V))$. However, owing to the non-linear, non-local nature of the consensus point $\mathfrak{m}_\alpha(\mu)$, standard Picard iterations fail. The critical step is establishing a localized Lipschitz bound for the map $\mu \mapsto \mathfrak{m}_\alpha(\mu)$ in the $\mathcal{W}_2$ metric, which we tackle in the next subsections.

\subsection{The Consensus Point}
\label{sec:consensus_lipschitz}

The central difficulty in establishing well-posedness for the infinite-dimensional McKean-Vlasov SDE \eqref{eq:mckean_vlasov_cbo} is the highly non-linear nature of the consensus point mapping $\mu \mapsto \mathfrak{m}_\alpha(\mu)$. Because the objective function $\mathscr{E}$ satisfies only local Lipschitz and polynomial growth conditions, standard global contraction arguments fail. 

To proceed, we establish a localized Lipschitz bound for $\mathfrak{m}_\alpha$ with respect to the $2$-Wasserstein metric. Let  $\mathscr{P}_{2,R}(H)$ denote the space of probability measures on $H$ with second moments bounded by $R^2$ for some $R>0$, i.e.,
\begin{equation*}
    \mathscr{P}_{2,R}(H) = \left\{ \mu \in \mathscr{P}_2(H) \; : \; \int_H \|x\|_H^2 \,\mu(\dd x) \leq R^2 \right\}.
\end{equation*}

\begin{lemma}[Local Lipschitz Bound in $\mathcal{W}_2$]
\label{lem:local_lipschitz_m}
Under Assumption \ref{assum:objective}, for any $R > 0$, there exists a constant $L_{\mathfrak{m}}(R) > 0$ such that for all
\begin{equation*}
    \|\mathfrak{m}_\alpha(\mu) - \mathfrak{m}_\alpha(\nu)\|_H \leq L_{\mathfrak{m}}(R)\, \mathcal{W}_2(\mu, \nu) \qquad \mu, \nu \in \mathscr{P}_{2,R}(H).
\end{equation*}
\end{lemma}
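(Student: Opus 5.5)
We follow the standard CBO stability argument (as in Carrillo et al.). The global Lipschitz property of Proposition \ref{prop:global_lipschitz} replaces pointwise differentiability, and a lower bound on the normalizing constant on $\mathscr{P}_{2,R}(H)$ is the only place where $R$ enters.

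Fix $\mu,\nu\in\mathscr{P}_{2,R}(H)$ and write
\[
\mathfrak{m}_\alpha(\mu)=\frac{\int f\,\dd\mu}{\int w\,\dd\mu},
\]
with $w,f$ as in Proposition \ref{prop:global_lipschitz}, and $L_w,L_f$ their global Lipschitz constants. Write $Z_\mu=\int w\,\dd\mu$ and $N_\mu=\int f\,\dd\mu$. Both are well defined, since $0<w\le1$ and $\|f(x)\|_H\le\|x\|_H$; the latter makes $N_\mu$ a Bochner integral in $H$. The algebraic splitting is
\[
\mathfrak{m}_\alpha(\mu)-\mathfrak{m}_\alpha(\nu)=\frac{N_\mu-N_\nu}{Z_\mu}+N_\nu\,\frac{Z_\nu-Z_\mu}{Z_\mu Z_\nu}
=\frac{N_\mu-N_\nu}{Z_\mu}+\mathfrak{m}_\alpha(\nu)\,\frac{Z_\nu-Z_\mu}{Z_\mu}.
\]

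\textbf{Step 1 (coupling).} Let $\pi$ be an optimal coupling of $(\mu,\nu)$ for $\mathcal{W}_2$. It exists on a Polish space such as $H$. By the Bochner inequality and Cauchy--Schwarz,
\[
\|N_\mu-N_\nu\|_H\le\int\|f(x)-f(y)\|_H\,\dd\pi\le L_f\,\mathcal{W}_2(\mu,\nu),
\]
and likewise $|Z_\mu-Z_\nu|\le L_w\,\mathcal{W}_2(\mu,\nu)$.

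\textbf{Step 2 (lower bound on $Z$).} This is the key step. By Chebyshev, $\mu(\{\|x\|_H\le 2R\})\ge 1-\tfrac14=\tfrac34$. Assumption \ref{assum:objective}(2) gives $\mathscr{E}(x)\le C_{\mathscr{E}}(1+(2R)^p)$ on that ball. Hence
\[
Z_\mu\ge \tfrac34\exp\!\big(-\alpha C_{\mathscr{E}}(1+(2R)^p)\big)=:c_R>0,
\]
and the same bound holds for $Z_\nu$. This is the only place where the restriction to $\mathscr{P}_{2,R}(H)$ is genuinely needed: without it $Z_\mu$ can be arbitrarily small, for instance when mass escapes to regions where $\mathscr{E}$ is large.

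\textbf{Step 3 (bound on $\mathfrak{m}_\alpha(\nu)$).} We have
\[
\|\mathfrak{m}_\alpha(\nu)\|_H\le Z_\nu^{-1}\int\|x\|_H w\,\dd\nu\le c_R^{-1}R,
\]
using Jensen and $w\le1$.

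Combining the three steps gives
\[
\|\mathfrak{m}_\alpha(\mu)-\mathfrak{m}_\alpha(\nu)\|_H\le\Big(\frac{L_f}{c_R}+\frac{R\,L_w}{c_R^2}\Big)\mathcal{W}_2(\mu,\nu),
\]
which defines $L_{\mathfrak{m}}(R)$. I expect no real obstacle beyond Step 2, together with the measurability and Bochner-integrability checks. Those checks are immediate from continuity of $f$ and $w$ and the separability of $H$. One could alternatively obtain a sharper $c_R$ via Jensen, $Z_\mu\ge\exp(-\alpha\int\mathscr{E}\,\dd\mu)$, but that requires $p\le2$ to control $\int\|x\|^p\,\dd\mu$ by $R$. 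The Chebyshev route avoids any restriction on $p$, so I would use it.
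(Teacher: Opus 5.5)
Your proof is correct and follows the paper's argument essentially step for step. Both use the same numerator/denominator splitting, the same optimal-coupling bounds through the global Lipschitz constants $L_w, L_f$ of Proposition~\ref{prop:global_lipschitz}, a Chebyshev lower bound on $Z$, and they arrive at the same form of constant $L_f/c_R + R L_w/c_R^2$. The differences are cosmetic: you use the ball of radius $2R$ with mass at least $3/4$ and the polynomial growth bound directly, whereas the paper uses radius $\sqrt{2}R$ with mass at least $1/2$ and $M_R=\sup_{\|x\|_H\le\sqrt{2}R}\mathscr{E}(x)$. One small correction to your commentary: the moment bound defining $\mathscr{P}_{2,R}(H)$ is also used in your Step~3, not only in Step~2.
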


\begin{proof}
Let $w(x) = \exp(-\alpha \, \mathscr{E}(x))$. We decompose the consensus point into a numerator operator $N(\mu) = \int_H x\, w(x)\, \mu(\dd x)$ and a denominator functional $Z(\mu) = \int_H w(x)\, \mu(\dd x)$. 

For any two measures $\mu, \nu \in \mathscr{P}_{2,R}(H)$, the difference is bounded such as
\begin{align}
    \label{eq:m_diff_bound_R}
    \|\mathfrak{m}_{\alpha}(\mu) - \mathfrak{m}_{\alpha}(\nu)\|_H &= \left\| \frac{N(\mu)Z(\nu) - N(\nu)Z(\mu)}{Z(\mu)Z(\nu)} \right\|_H \nonumber \\
    &\leq \frac{1}{|Z(\mu)|} \|N(\mu) - N(\nu)\|_H + \frac{\|N(\nu)\|_H}{|Z(\mu)Z(\nu)|} |Z(\mu) - Z(\nu)|.
\end{align}

\textbf{Step 1.} \textit{(Useful estimates)}

First, we bound the denominator $Z(\mu)$ strictly away from zero. By the definition of $\mathscr{P}_{2,R}(H)$, the second moment is bounded as $\int_H \|x\|_H^2\, \mu(\dd x) \leq R^2$. By Chebyshev's inequality, the probability mass outside a ball of radius $\sqrt{2}R$ is strictly bounded
\begin{equation*}
    \mu\big(\{x \in H : \|x\|_H \geq \sqrt{2}R\}\big) \leq \frac{1}{(\sqrt{2}R)^2} \int_H \|x\|_H^2 \,\mu(\dd x) \leq \frac{R^2}{2R^2} = \frac{1}{2}.
\end{equation*}
Consequently, the measure of the closed ball $B_{\sqrt{2}R}$ is at least $1/2$. By Assumption \ref{assum:objective}, the objective function $\mathscr{E}$ has a polynomial growth and is locally bounded, so on $B_{\sqrt{2}R}$, there exists a finite supremum $M_R = \sup\limits_{\|x\|_H \leq \sqrt{2}R} \mathscr{E}(x) < \infty$. Therefore
\begin{equation}
\label{eq:lowerbound Z}
\begin{aligned}
    Z(\mu) &= \int_H \exp(-\alpha \,\mathscr{E}(x))\, \mu(\dd x) \geq \int_{B_{\sqrt{2}R}} \exp(-\alpha\, \mathscr{E}(x))\, \mu(\dd x) \\
    &\geq \exp(-\alpha\, M_R) \mu(B_{\sqrt{2}R}) \geq \frac{1}{2} \exp(-\alpha\, M_R) \, =: \, Z_{\min}(R)>0.
\end{aligned}
\end{equation} 
Thus, $Z(\mu) \geq Z_{\min}(R)$ and, by symmetry, $Z(\nu) \geq Z_{\min}(R)$.

Next, we bound the numerator $\|N(\nu)\|_H$. Since $\mathscr{E}(x) \geq 0$, we have $w(x) \leq 1$. Applying Jensen's inequality (or the Cauchy-Schwarz inequality for probability measures)
\begin{equation*}
    \|N(\nu)\|_H \leq \int_H \|x\|_H\, \nu(\dd x) \leq \left( \int_H \|x\|_H^2 \,\nu(\dd x) \right)^{1/2} \leq R.
\end{equation*}

Finally, we verify the Lipschitz continuity of the integrals. Due to the rapid decay of the exponential weight $w(x)$ dominating the polynomial growth of $\mathscr{E}(x)$ (along with its local Lipschitz property), the mappings $x \mapsto w(x)$ and $x \mapsto x \,w(x)$ are globally Lipschitz on $H$ with constants $L_w$ and $L_f$, respectively. See  Proposition \ref{prop:global_lipschitz}.

\textbf{Step 2.} \textit{(Conclusion)}  

Let $\pi \in \Pi(\mu, \nu)$ be the optimal coupling achieving the 2-Wasserstein distance. We bound the difference in the denominators
\begin{align*}
    |Z(\mu) - Z(\nu)| &\leq \int_{H \times H} |w(x) - w(y)| \pi(\dd x, \dd y) \\
    &\leq L_w \int_{H \times H} \|x - y\|_H \pi(\dd x, \dd y) \\
    &\leq L_w \left( \int_{H \times H} \|x - y\|_H^2 \pi(\dd x, \dd y) \right)^{1/2} = L_w\, \mathcal{W}_2(\mu, \nu).
\end{align*}
With a similar argument for the numerator, one gets
\begin{align*}
    \|N(\mu) - N(\nu)\|_H &\leq \int_{H \times H} \|x \,w(x) - y\, w(y)\|_H \pi(\dd x, \dd y) \\
    &\leq L_f \left( \int_{H \times H} \|x - y\|_H^2 \pi(\dd x, \dd y) \right)^{1/2} = L_f \,\mathcal{W}_2(\mu, \nu).
\end{align*}
Substituting these bounds back into \eqref{eq:m_diff_bound_R} yields
\begin{equation*}
    \|\mathfrak{m}_{\alpha}(\mu) - \mathfrak{m}_{\alpha}(\nu)\|_H \leq \left( \frac{L_f}{Z_{\min}(R)} + \frac{R L_w}{Z_{\min}(R)^2} \right) \mathcal{W}_2(\mu, \nu).
\end{equation*}
Setting $L_m(R) = \frac{L_f}{Z_{\min}(R)} + \frac{R L_w}{Z_{\min}(R)^2}$ completes the proof.
\end{proof}

This localized Lipschitz bound guarantees that the mean-field drift is sufficiently regular, provided the particle distribution does not instantaneously escape to infinity. This sets the stage for a truncation-based fixed-point argument.

\begin{lemma}[Moment Bound for the Consensus Point]
\label{lem:m_alpha_moment}
Suppose Assumption \ref{assum:objective} holds (in particular, $\mathscr{E}(x) \geq 0$ for all $x \in H$). Then for any probability measure $\mu \in \mathscr{P}_2(V)$
\begin{equation*}
    \|\mathfrak{m}_\alpha(\mu)\|_H^2 \leq \frac{1}{Z(\mu)} \int_V \|x\|_H^2\, \mu(\dd x),
\end{equation*}
where $Z(\mu) = \int_V \exp(-\alpha\, \mathscr{E}(x)) \,\mu(\dd x)$. That is, the squared norm of the consensus point is bounded by the second moment of $\mu$ scaled by the inverse of the partition function.
\end{lemma}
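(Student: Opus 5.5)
The plan is to write $\mathfrak{m}_\alpha(\mu)$ as the mean of $x$ under a reweighted probability measure. Then Jensen's inequality, combined with the pointwise bound $w\le 1$, gives the claim directly.

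\textbf{Step 1 (reweighted measure).} Set $w(x)=\exp(-\alpha\,\mathscr{E}(x))$ and $Z(\mu)=\int_V w\,\dd\mu$. Since $\mathscr{E}\ge 0$ by Assumption \ref{assum:objective}, we have $0<w\le 1$, and $Z(\mu)>0$ because $w$ is strictly positive. So the measure
\[
\eta_\alpha^\mu(\dd x):=\frac{w(x)}{Z(\mu)}\,\mu(\dd x)
\]
is a probability measure on $V$. Because $\mu\in\mathscr{P}_2(V)$, the function $x\mapsto\|x\|_H$ is $\eta_\alpha^\mu$-integrable. Hence $\mathfrak{m}_\alpha(\mu)=\int_V x\,\eta_\alpha^\mu(\dd x)$ is a well-defined Bochner integral. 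It lies in $V$, and $V$ is finite-dimensional, so measurability is harmless here.

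\textbf{Step 2 (Jensen).} The map $x\mapsto\|x\|_H^2$ is convex, and Bochner integrals satisfy $\|\int f\,\dd\eta\|\le\int\|f\|\,\dd\eta$. Combining this with Cauchy--Schwarz for the probability measure $\eta_\alpha^\mu$ gives
\[
\|\mathfrak{m}_\alpha(\mu)\|_H^2\le\Big(\int_V\|x\|_H\,\eta_\alpha^\mu(\dd x)\Big)^2\le\int_V\|x\|_H^2\,\eta_\alpha^\mu(\dd x).
\]

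\textbf{Step 3 (drop the weight).} Using $w\le 1$ in the last integral yields
\[
\int_V\|x\|_H^2\,\frac{w(x)}{Z(\mu)}\,\mu(\dd x)\le\frac{1}{Z(\mu)}\int_V\|x\|_H^2\,\mu(\dd x),
\]
which is exactly the stated bound.

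No step is a real obstacle. The only points needing care are that $Z(\mu)>0$, which follows from strict positivity of the exponential weight, and that the Bochner integral is well defined, which uses the finite second moment. It is essential to apply Jensen with respect to the normalized measure $\eta_\alpha^\mu$ rather than $\mu$. This is what produces the factor $1/Z(\mu)$ instead of $1/Z(\mu)^2$.
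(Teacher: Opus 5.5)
Your proposal is correct and follows the paper's proof: the paper also rewrites $\mathfrak{m}_\alpha(\mu)$ as the mean under the normalized Gibbs measure $\tilde\mu$, applies Jensen for Bochner integrals to $\|x\|_H^2$, and then drops the weight using $e^{-\alpha\mathscr{E}}\le 1$. One minor point: calling the normalization ``essential'' overstates it, since Cauchy--Schwarz taken directly under $\mu$, together with $w^2\le w$, also gives the factor $1/Z(\mu)$. This does not affect your argument.
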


\begin{proof}
We can rewrite the consensus point as an expectation with respect to a new, exponentially weighted probability measure. Define the measure $\tilde{\mu}$ on $H$ such that
\begin{equation*}
    \dd \tilde{\mu}(x) = \frac{\exp(-\alpha \, \mathscr{E}(x))}{Z(\mu)} \,\mu(\dd x).
\end{equation*}
By definition, $\int_V \dd\tilde{\mu}(x) = 1$, hence $\tilde{\mu}$ is a probability measure. The consensus point can thus be written as $\mathfrak{m}_\alpha(\mu) = \int_V x \, \tilde{\mu}(\dd x)$.

Because $F(x) = \|x\|_H^2$ is a convex function on the Hilbert space $H$, we can apply Jensen's inequality for Bochner integrals
\begin{equation*}
    \|\mathfrak{m}_\alpha(\mu)\|_H^2 = \left\| \int_V x \, \tilde{\mu}(\dd x) \right\|_H^2 \leq \int_V \|x\|_H^2 \, \tilde{\mu}(\dd x).
\end{equation*}

Substituting the definition of $\tilde{\mu}$ back into the integral, we obtain
\begin{equation*}
    \int_V \|x\|_H^2 \, \tilde{\mu}(\dd x) = \int_V \|x\|_H^2 \,\frac{\exp(-\alpha\, \mathscr{E}(x))}{Z(\mu)} \mu(\dd x).
\end{equation*}

By Assumption \ref{assum:objective}, the objective function is non-negative ($\mathscr{E}(x) \geq 0$), which implies $\exp(-\alpha \,\mathscr{E}(x)) \leq 1$ for all $x \in H$. Applying this upper bound to the numerator of the integrand yields
\begin{equation*}
    \int_V \|x\|_H^2 \frac{\exp(-\alpha\, \mathscr{E}(x))}{Z(\mu)} \mu(\dd x) \leq \frac{1}{Z(\mu)} \int_V \|x\|_H^2\, \mu(\dd x)
\end{equation*}
which completes the proof.
\end{proof}

\subsection{A Priori Moment Estimates}
\label{sec:moment_estimates}

Having established the local regularity of the consensus point $\mathfrak{m}_{\alpha}$, we now employ the measure-freezing method to decouple the McKean-Vlasov dynamics. By fixing the probability measure flow, we transition from a non-linear McKean-Vlasov equation to a standard It\^o SDE in the Hilbert space $H$.

Let $T > 0$ be a fixed time horizon. We define the space of continuous probability measure flows strictly supported on the active subspace $V$ with bounded second moments as $\mathscr{C}_T := C([0,T]; \mathscr{P}_2(V))$. 
We equip this space with the uniform Wasserstein metric 
\begin{equation*}
    \mathsf{d}_{\mathscr{C}_T}(\bm{\mu}, \bm{\nu}) = \sup_{0 \leq t \leq T} \mathcal{W}_2(\mu_t, \nu_t), \qquad \forall\, \bm{\mu}=(\mu_t)_{t\geq 0}, \bm{\nu} = (\nu_{t})_{t\geq 0} \in \mathscr{C}_T
\end{equation*}

Fix an arbitrary deterministic probability measure flow $\bm{\mu} = (\mu_t)_{t \in [0,T]} \in \mathscr{C}_T$. Substituting $\mu_t$ into \eqref{eq:mckean_vlasov_cbo} yields the following decoupled SDE for a process $X^{\bm{\mu}}_t$
\begin{equation}
    \label{eq:decoupled_sde}
    \dd X^{\bm{\mu}}_t = -\lambda \big(X^{\bm{\mu}}_t - \mathfrak{m}_\alpha(\mu_t)\big)\,\dd t + \sigma\, \mathsf{D}\big(X^{\bm{\mu}}_t - \mathfrak{m}_\alpha(\mu_t)\big)\, \dd W_t^Q, \quad X^{\bm{\mu}}_0 = \xi.
\end{equation}
Because $\mu_t$ is now a fixed parameter, the mapping $t \mapsto \mathfrak{m}_\alpha(\mu_t)$ is a deterministic function of time. Furthermore, since $\bm{\mu} \in \mathscr{C}_T$, the second moment is uniformly bounded on $[0,T]$, which implies that $\sup_{t \in [0,T]} \|\mathfrak{m}_\alpha(\mu_t)\|_H \leq M_\mu < \infty$. Existence and uniqueness of a strong solution to \eqref{eq:decoupled_sde} are guaranteed by Theorem \ref{thm:solution} in Appendix \ref{app:sde in H}.
\begin{lemma}[Strict Invariance of the Decoupled Trajectory] \label{lem:subspace_invariance}
Let the initial condition $\xi \in V$ almost surely. For any fixed measure flow $\bm{\mu} \in \mathscr{C}_{T}$, the consensus point satisfies $\mathfrak{m}_\alpha(\mu_t) \in V$. Consequently, the unique strong solution $X^{\bm{\mu}}_t$ to the decoupled SDE \eqref{eq:decoupled_sde} remains strictly inside $V$ for all $t \geq 0$ almost surely.
\end{lemma}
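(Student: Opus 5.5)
The argument has two parts: first show that the consensus point lies in $V$, then show that the component of the decoupled solution in $V^\perp$ solves a deterministic linear ODE with zero initial datum and therefore vanishes.

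\textbf{Step 1: the consensus point lies in $V$.} Fix $t$. Since $\mu_t\in\mathscr{P}_2(V)$, we have $\mu_t(H\setminus V)=0$. The numerator $N(\mu_t)=\int_V x\,e^{-\alpha\mathscr{E}(x)}\,\mu_t(\dd x)$ is a Bochner integral of a $V$-valued integrable function. Its integrability follows from $w\le 1$ and finite second moments. The subspace $V$ is finite-dimensional and hence closed, so $\mathsf{P}_{V^\perp}=I-\mathsf{P}_V$ is a bounded linear operator. Bounded operators commute with Bochner integrals, which gives
\[
\mathsf{P}_{V^\perp}N(\mu_t)=\int_V \mathsf{P}_{V^\perp}x\,e^{-\alpha\mathscr{E}(x)}\,\mu_t(\dd x)=0 .
\]
Dividing by $Z(\mu_t)>0$ yields $\mathfrak{m}_\alpha(\mu_t)\in V$. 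Positivity of $Z(\mu_t)$ holds because $w>0$ everywhere, or alternatively by the lower bound \eqref{eq:lowerbound Z}.

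\textbf{Step 2: the $V^\perp$ component vanishes.} Set $Y_t:=\mathsf{P}_{V^\perp}X^{\bm{\mu}}_t$ and apply $\mathsf{P}_{V^\perp}$ to the integral form of \eqref{eq:decoupled_sde}. A bounded linear operator passes through both the Bochner time integral and the stochastic integral; for the latter, $\mathsf{P}_{V^\perp}\int_0^t \Phi_s\,\dd W^Q_s=\int_0^t \mathsf{P}_{V^\perp}\Phi_s\,\dd W^Q_s$ for $\Phi\in\mathscr{L}_Q$-valued integrands. By \eqref{def:diffusion}, the diffusion coefficient is $\|X^{\bm{\mu}}_s-\mathfrak{m}_\alpha(\mu_s)\|_H\,\mathsf{P}_V$, and $\mathsf{P}_{V^\perp}\mathsf{P}_V=0$, so the noise term drops out. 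By Step 1, $\mathsf{P}_{V^\perp}\mathfrak{m}_\alpha(\mu_s)=0$. Since $\xi\in V$ almost surely, $Y_0=0$. Consequently, almost surely and for all $t$,
\[
Y_t=-\lambda\int_0^t Y_s\,\dd s .
\]

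\textbf{Step 3: conclusion.} Pathwise, $\|Y_t\|_H\le\lambda\int_0^t\|Y_s\|_H\,\dd s$, and Gronwall's inequality gives $Y_t=0$ for all $t\in[0,T]$ almost surely. Equivalently, $Y_t=e^{-\lambda t}Y_0=0$. Because paths are continuous, the exceptional null set can be chosen independently of $t$. Hence $X^{\bm{\mu}}_t\in V$ for all $t$ almost surely, and since $T$ is arbitrary this holds for all $t\ge0$.

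\textbf{Main obstacle.} The only delicate point is justifying that $\mathsf{P}_{V^\perp}$ commutes with the $Q$-Wiener stochastic integral. I would cite the standard property that $L\int\Phi\,\dd W^Q=\int L\Phi\,\dd W^Q$ for bounded $L$ (Appendix~\ref{app:sde in H}; see, e.g., \cite{liu2015stochastic}). This property follows from the definition for elementary integrands and the It\^o isometry, using $\|L\Phi\|_{\mathscr{L}_Q}\le\|L\|\,\|\Phi\|_{\mathscr{L}_Q}$. As an alternative, the whole argument can be run via uniqueness: the solution of the $V$-valued finite-dimensional SDE obtained by replacing $W^Q$ with $\mathsf{P}_VW^Q$ also solves \eqref{eq:decoupled_sde}, and the uniqueness guaranteed by Theorem~\ref{thm:solution} then identifies it with $X^{\bm{\mu}}$.
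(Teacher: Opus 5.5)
Your proposal is correct and follows essentially the same route as the paper: show $\mathfrak{m}_\alpha(\mu_t)\in V$, apply $\mathsf{P}_{V^\perp}$ to the decoupled SDE so that the noise vanishes because $\mathsf{P}_{V^\perp}\mathsf{P}_V=0$, and conclude from the linear equation $\dd Y_t=-\lambda Y_t\,\dd t$ with $Y_0=0$. Your version is somewhat more careful than the paper's: you justify the commutation of $\mathsf{P}_{V^\perp}$ with the Bochner and stochastic integrals, work with the integral form, and handle the null set uniformly in $t$, all of which the paper takes for granted.
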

\begin{proof}
Because the law $\mu_t$ is entirely supported on the closed linear subspace $V$, any integral with respect to this measure must also reside in $V$. Therefore, the consensus point lies strictly in the active subspace
\begin{equation*}
    \mathfrak{m}_\alpha(\mu_t) = \frac{\int_V x \exp(-\alpha\, \mathscr{E}(x)) \,\dd\mu_t(x)}{\int_V \exp(-\alpha \, \mathscr{E}(x))\, \dd\mu_t(x)} \in V.
\end{equation*}
Because $\mathfrak{m}_\alpha(\mu_t) \in V$, its orthogonal projection vanishes, that is $\mathsf{P}_{V^\perp} \mathfrak{m}_\alpha(\mu_t) = 0$.

To isolate the behavior outside the active subspace, we apply the continuous linear operator $\mathsf{P}_{V^\perp}$ to both sides of \eqref{eq:decoupled_sde}. Because $\mathsf{P}_{V^\perp}$ is a bounded linear operator, it commutes with the stochastic differential. 

First, we evaluate the diffusion term. Because $\mathsf{P}_V$ maps entirely into $V$, and $\mathsf{P}_{V^\perp}$ annihilates anything in $V$, their composition is identically zero, that is
\begin{equation*}
    \mathsf{P}_{V^\perp} \Big( \mathsf{D}\big(X^{\bm{\mu}}_t - \mathfrak{m}_\alpha(\mu_t)\big) \dd W_t^Q \Big) =\|X^{\bm{\mu}}_t - \mathfrak{m}_\alpha(\mu_t)\| (\mathsf{P}_{V^\perp} \mathsf{P}_V) \dd W_t^Q = 0.
\end{equation*}
Hence, the stochastic noise in the orthogonal complement is null. 

Next, we evaluate the drift term. Using the linearity of $\mathsf{P}_{V^\perp}$ and the fact that $\mathsf{P}_{V^\perp} \mathfrak{m}_\alpha(\mu_t) = 0$, we obtain
\begin{equation*}
    \dd \mathsf{P}_{V^\perp}X^{\bm{\mu}}_t = -\lambda \Big(\mathsf{P}_{V^\perp} X^{\bm{\mu}}_t - \mathsf{P}_{V^\perp} \mathfrak{m}_\alpha(\mu_t\Big)\dd t = -\lambda \mathsf{P}_{V^\perp} X^{\bm{\mu}}_t \,\dd t.
\end{equation*}

Let $Y_t = \mathsf{P}_{V^\perp} X^{\bm{\mu}}_t$. The SDE for the orthogonal component has reduced to
\begin{equation*}
    \dd Y_t = -\lambda Y_t \,\dd t
\end{equation*}
whose unique solution is $Y_t = Y_0 e^{-\lambda t}$. 
Because the initial state $X_0 \in V$ almost surely, its orthogonal component is null, that is $Y_0 = \mathsf{P}_{V^\perp} X^{\bm{\mu}}_0 = 0$. Consequently, $Y_t = 0$ for all $t \in[0,T]$, which implies $\mathsf{P}_{V^\perp}X^{\bm{\mu}}_t = 0$. Therefore, $X^{\bm{\mu}}_t\in V$ almost surely.
\end{proof}

In the following lemma, we verify existence of its second moment.

\begin{lemma}[Uniform Second Moment Estimate]
\label{lem:moment_estimate}
Suppose Assumption \ref{assum:objective} holds, and let us consider the setting in Proposition \ref{prop:diffusion} such  that $\lambda > 2\sigma^2 L_{\mathsf{D}}^2$. Let $\bm{\mu} \in \mathscr{C}_T$ be a fixed probability measure flow, and let $X^{\bm{\mu}}_t$ be the strong solution to the decoupled SDE \eqref{eq:decoupled_sde} with an initial condition satisfying $\mathbb{E}[\,\|\xi\|_H^2\,] < \infty$. Then, there exists a constant $C > 0$, dependent only on $\lambda, \sigma, L_{\mathsf{D}}$, and the flow $\bm{\mu}$, such that
\begin{equation*}
    \sup_{0\leq t \leq T} \,\mathbb{E}\big[\,\|X^{\bm{\mu}}_t\|_H^2\,\big] \leq C.
\end{equation*}
\end{lemma}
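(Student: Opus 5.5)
We apply It\^o's formula in $H$ to $F(x)=\|x\|_H^2$ along the strong solution $X_t:=X^{\bm{\mu}}_t$ of \eqref{eq:decoupled_sde}, with $\mathfrak{m}_t:=\mathfrak{m}_\alpha(\mu_t)$ treated as a deterministic, bounded function of time, $\sup_{t\le T}\|\mathfrak{m}_t\|_H\le M_\mu$. This bound follows from Lemma \ref{lem:m_alpha_moment}: $\bm\mu\in\mathscr{C}_T$ gives $\sup_t\int\|x\|^2\mu_t(\dd x)\le R^2$, and then \eqref{eq:lowerbound Z} gives $Z(\mu_t)\ge Z_{\min}(R)$. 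It\^o's formula yields
\begin{equation*}
\dd\|X_t\|_H^2=\Big(-2\lambda\langle X_t,X_t-\mathfrak{m}_t\rangle_H+\sigma^2\|\mathsf{D}(X_t-\mathfrak{m}_t)\|_{\mathscr{L}_Q}^2\Big)\dd t+2\sigma\langle X_t,\mathsf{D}(X_t-\mathfrak{m}_t)\,\dd W^Q_t\rangle_H .
\end{equation*}
To remove the martingale term rigorously, we localize with stopping times $\tau_n=\inf\{t:\|X_t\|_H\ge n\}\wedge T$. On $[0,\tau_n]$ the integrand is bounded in $\mathscr{L}_Q$ by Proposition \ref{prop:diffusion}(2), so the stochastic integral is a true martingale with zero mean. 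We then pass $n\to\infty$ using path continuity and Fatou's lemma.

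We estimate the drift terms. By Young's inequality, $-2\lambda\langle x,x-m\rangle\le-2\lambda\|x\|^2+\lambda\|x\|^2+\lambda\|m\|^2=-\lambda\|x\|^2+\lambda\|m\|^2$. By Proposition \ref{prop:diffusion}(2),
\begin{equation*}
\sigma^2\|\mathsf{D}(x-m)\|_{\mathscr{L}_Q}^2\le\sigma^2L_{\mathsf{D}}^2\|x-m\|^2\le2\sigma^2L_{\mathsf{D}}^2(\|x\|^2+\|m\|^2).
\end{equation*}
Hence $\phi_n(t):=\mathbb{E}\|X_{t\wedge\tau_n}\|^2$ satisfies
\begin{equation*}
\phi_n(t)\le\mathbb{E}\|\xi\|^2+\int_0^t\Big(-(\lambda-2\sigma^2L_{\mathsf{D}}^2)\,\mathbb{E}\big[\|X_{s}\|^2\mathds{1}_{s\le\tau_n}\big]+(\lambda+2\sigma^2L_{\mathsf{D}}^2)M_\mu^2\Big)\dd s .
\end{equation*}
Under the hypothesis $\lambda>2\sigma^2L_{\mathsf{D}}^2$, the first term in the integrand is nonpositive and can be dropped. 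This gives $\phi_n(t)\le\mathbb{E}\|\xi\|^2+(\lambda+2\sigma^2L_{\mathsf{D}}^2)M_\mu^2\,T$. To get a horizon-independent bound instead, we work with the differential form $\phi'\le-\kappa\phi+c$, where $\kappa=\lambda-2\sigma^2L_{\mathsf{D}}^2$, and apply Gr\"onwall's lemma to obtain $\phi(t)\le\mathbb{E}\|\xi\|^2+c/\kappa$. Letting $n\to\infty$ with Fatou's lemma gives the claim with $C=\mathbb{E}\|\xi\|^2+(\lambda+2\sigma^2L_{\mathsf{D}}^2)M_\mu^2/(\lambda-2\sigma^2L_{\mathsf{D}}^2)$. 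This constant depends on $\lambda,\sigma,L_{\mathsf{D}}$, on $\bm\mu$ through $M_\mu$, and on the initial second moment.

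The main technical point is the justification of It\^o's formula and the localization in infinite dimensions. Two simplifications help. First, by Lemma \ref{lem:subspace_invariance} the process lives in the finite-dimensional space $V$, so we may alternatively invoke the classical finite-dimensional It\^o formula on $V\cong\mathbb{R}^d$ with the noise $\mathsf{P}_V W^Q$, whose covariance is $\mathsf{P}_VQ\mathsf{P}_V$. Second, the trace term is exactly $\|x-m\|^2\,\mathrm{Tr}(\mathsf{P}_VQ\mathsf{P}_V)=L_{\mathsf{D}}^2\|x-m\|^2$. We also check that $\mathbb{E}\|X_{t\wedge\tau_n}\|^2$ is finite and absolutely continuous in $t$, which is needed for the differential-inequality form of Gr\"onwall's lemma. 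If that is inconvenient, we fall back on the integral form above, giving a $T$-dependent constant, which suffices for the statement on $[0,T]$.
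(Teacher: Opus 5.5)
Your proposal is correct and follows the paper's proof essentially step for step. Both use It\^o's formula for $\|X^{\bm{\mu}}_t\|_H^2$, Young's inequality on the drift, Proposition~\ref{prop:diffusion}(2) on the trace term, Lemma~\ref{lem:m_alpha_moment} with \eqref{eq:lowerbound Z} to bound $\mathfrak{m}_\alpha(\mu_t)$, and Gr\"onwall with $\kappa=\lambda-2\sigma^2L_{\mathsf{D}}^2>0$, and both arrive at the same constant $K$; your localization adds rigor that the paper leaves implicit.

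One small repair is needed. For the stopped quantity $\phi_n(t)=\mathbb{E}\|X_{t\wedge\tau_n}\|_H^2$, the inequality $\phi_n'\le-\kappa\phi_n+c$ does not hold as written, because after $\tau_n$ the frozen value $\|X_{\tau_n}\|_H^2$ receives no damping. For the horizon-independent bound, apply It\^o's formula to $e^{\kappa t}\|X_t\|_H^2$, stop at $\tau_n$, and pass to the limit with Fatou; this gives $\mathbb{E}\|X_t\|_H^2\le e^{-\kappa t}\mathbb{E}\|\xi\|_H^2+c/\kappa$. As you note, your $T$-dependent integral bound already suffices for the lemma as stated.
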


\begin{proof}
Because the input measure flow $\bm{\mu}$ belongs to $\mathscr{C}_T = C([0,T]; \mathscr{P}_2(V))$, its second moment is continuous and uniformly bounded over the time horizon $[0,T]$. Let us denote this bound as 
\begin{equation}\label{K_mu}
    \sup_{s \in [0,T]} \int_V \|x\|_H^2\, \mu_s(\dd x) \; =:\; K_\mu < \infty.
\end{equation}
Similarly, because $\mathscr{E}(x) \geq 0$ and the mapping $s \mapsto \mu_s$ is continuous, the denominator of the consensus point $Z(\mu_s) = \int_{V} w(x)\,\mu_{s}(\dd x)$ is bounded away from zero; see \eqref{eq:lowerbound Z}. By the Extreme Value Theorem, there exists a constant $Z_{\min} > 0$ such that
\begin{equation}\label{z_min}
    \inf_{s \in [0,T]} Z(\mu_s) = Z_{\min}.
\end{equation}

Applying the infinite-dimensional It\^o formula (see Theorem \ref{thm:Ito in H} and Example \ref{ex:Ito in H}) to $\|X^{\bm{\mu}}_t\|_H^2$ yields the following integral equation 
\begin{equation}
\label{eq:ito_norm_squared}
\begin{aligned}
    \|X^{\bm{\mu}}_t\|_H^2 &= \|\xi\|_H^2 + \int_0^t 2\, \langle X^{\bm{\mu}}_s, -\lambda(X^{\bm{\mu}}_s - \mathfrak{m}_\alpha(\mu_s)) \rangle_H \,\dd s \\
    &\qquad \qquad + \int_0^t 2 \, \langle X^{\bm{\mu}}_s, \sigma \mathsf{D}(X^{\bm{\mu}}_s - \mathfrak{m}_\alpha(\mu_s)) \dd W_s^{Q} \rangle_H\\
    &\qquad \qquad \qquad \qquad + \int_0^t \sigma^2 \|\mathsf{D}(X^{\bm{\mu}}_s - \mathfrak{m}_\alpha(\mu_s))\|_{\mathscr{L}_Q}^2 \dd s.
\end{aligned}
\end{equation}

We bound the drift using the Cauchy-Schwarz and Young's inequalities, and the trace term using the Lipschitz continuity of $\mathsf{D}$ from Proposition \ref{prop:diffusion}. Thus we have
\[
    2 \langle X^{\bm{\mu}}_s, -\lambda(X^{\bm{\mu}}_s - \mathfrak{m}_\alpha(\mu_s)) \rangle_H \leq -\lambda \|X^{\bm{\mu}}_s\|_H^2 + \lambda \|\mathfrak{m}_\alpha(\mu_s)\|_H^2, 
\]
and
\[
    \sigma^2 \|\mathsf{D}(X^{\bm{\mu}}_s - \mathfrak{m}_\alpha(\mu_s))\|_{\mathscr{L}_Q}^2 \leq 2\sigma^2 L_{\mathsf{D}}^2 \big(\|X^{\bm{\mu}}_s\|_H^2 + \|\mathfrak{m}_\alpha(\mu_s)\|_H^2\big).
\]

Taking the expectation of \eqref{eq:ito_norm_squared}, the stochastic integral vanishes because its integrand is a true martingale. Substituting our bounds into the expectation, we obtain
\begin{equation}
    \label{eq:expectation_bound_pre_lemma}
    \mathbb{E}\big[\,\|X^{\bm{\mu}}_t\|_H^2\,\big] \leq \mathbb{E}[\,\|\xi\|_H^2\,] + \int_0^t  C_1\, \mathbb{E}\big[\|X^{\bm{\mu}}_s\|_H^2\big] + C_2\, \|\mathfrak{m}_\alpha(\mu_s)\|_H^2  \;\dd s,
\end{equation}
where 
\begin{equation*}
    C_1 = -\lambda + 2\sigma^2 L_{\mathsf{D}}^2 \qquad \text{ and } \qquad C_2 = \lambda + 2\sigma^2 L_{\mathsf{D}}^2.
\end{equation*}

Now, we invoke Lemma \ref{lem:m_alpha_moment} to bound the consensus point directly by the second moment of the input measure $\mu_s$
\begin{equation*}
    \|\mathfrak{m}_\alpha(\mu_s)\|_H^2 \leq \frac{1}{Z(\mu_s)} \int_V \|x\|_H^2 \,\mu_s(\dd x) \leq \frac{K_\mu}{Z_{\min}}.
\end{equation*}
Substituting this bound back into \eqref{eq:expectation_bound_pre_lemma} yields
\begin{equation*}
    \mathbb{E}\big[\,\|X^{\bm{\mu}}_t\|_H^2\,\big] \leq \mathbb{E}[\,\|\xi\|_H^2\,] + C_2 \,\frac{K_\mu}{Z_{\min}} \,T + \int_0^t C_1 \, \mathbb{E}\big[\|X^{\bm{\mu}}_s\|_H^2\big] \dd s.
\end{equation*}
Applying Gr\"onwall's Lemma, we find
\begin{equation}\label{eq:second moment apriori}
    \mathbb{E}\big[\,\|X^{\bm{\mu}}_t\|_H^2\,\big] \leq \left(\mathbb{E}[\,\|\xi\|_H^2\,] + C_2\, \frac{K_\mu}{Z_{\min}}\, T\right) \exp(C_1\, T).
\end{equation}
Moreover, when $\lambda > 2\sigma^2 L_{\mathsf{D}}^2$, one has $C_{1}<0$. Hence, the right-hand side, as a function of $T\geq0$, is bounded\footnote{Given $a,b,c>0$ constants, consider $f(t)=(a+b\,t)e^{-c\,t}$. Its maximum is attained at $t^* = \frac{b-ca}{bc}$. Two cases arise. If $b\leq ca$, then $f$ is decreasing on $[0,\infty)$, so we have $f(t)\leq f(0) =a$ for all $t\geq0$. If $b>ca$, then $f(t)\leq f(t^*)\leq b/c$. Conclusion: $f(t)\leq a+b/c$ for all $t\geq 0$.} by
\[
    K:= \mathbb{E}[\,\|\xi\|_H^2\,] - \frac{C_2}{C_1}\, \frac{K_\mu}{Z_{\min}} \qquad \text{ where } K_\mu \text{ is } \eqref{K_mu} \text{ and } Z_{\min} \text{ is } \eqref{z_min}
\]
and one gets
\begin{equation}\label{eq:second moment apriori 2}
    \mathbb{E}\big[\,\|X^{\bm{\mu}}_t\|_H^2\,\big] \leq K \qquad \forall\,t\geq 0.
\end{equation} 
\end{proof}

\begin{lemma}[Time Continuity in the Wasserstein Space]
\label{lem:time_continuity}
Under the assumptions of Lemma \ref{lem:moment_estimate}, the probability law of the decoupled process, denoted by $\nu_t = \mathrm{Law}(X^{\bm{\mu}}_t)$, is continuous in time with respect to the 2-Wasserstein metric. Therefore, the probability measure flow belongs to the space of continuous flows: $\bm{\nu}=(\nu_t)_{t\geq0} \in C([0,T]; \mathscr{P}_2(V))$. 
In particular, the map 
\begin{equation*}
\begin{aligned}
    \Phi: \mathscr{C}_T & \longrightarrow \mathscr{C}_T,\quad \bm{\mu}  \longmapsto \Phi(\bm{\mu})=\big(\mathrm{Law}(X^{\bm{\mu}}_t)\big)_{t\geq 0}
\end{aligned}
\end{equation*}
is well-defined.
\end{lemma}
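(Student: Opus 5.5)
The plan is to bound the Wasserstein distance by the natural synchronous coupling. For $0\le s\le t\le T$, the pair $(X^{\bm{\mu}}_s, X^{\bm{\mu}}_t)$ is a coupling of $(\nu_s,\nu_t)$. Hence
\begin{equation*}
    \mathcal{W}_2(\nu_t,\nu_s)^2 \le \mathbb{E}\big[\|X^{\bm{\mu}}_t - X^{\bm{\mu}}_s\|_H^2\big],
\end{equation*}
and it suffices to show that the right-hand side tends to zero as $|t-s|\to 0$. Along the way I will also record the two other facts $\Phi$ needs. First, $\nu_t$ is supported on $V$, which is immediate from Lemma \ref{lem:subspace_invariance}. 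Second, $\nu_t$ has finite second moment uniformly in $t$, which is Lemma \ref{lem:moment_estimate}.

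To estimate the increment, I write the decoupled SDE \eqref{eq:decoupled_sde} in integral form between $s$ and $t$ and use $\|a+b\|^2\le 2\|a\|^2+2\|b\|^2$.

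\textbf{Drift term.} Cauchy--Schwarz in time gives
\begin{equation*}
    \mathbb{E}\Big\|\int_s^t \lambda\big(X^{\bm{\mu}}_r-\mathfrak{m}_\alpha(\mu_r)\big)\,\dd r\Big\|_H^2 \le \lambda^2 (t-s)\int_s^t \mathbb{E}\big\|X^{\bm{\mu}}_r-\mathfrak{m}_\alpha(\mu_r)\big\|_H^2\,\dd r \le 2\lambda^2 (t-s)^2\Big(K+\frac{K_\mu}{Z_{\min}}\Big).
\end{equation*}
Here I use Lemma \ref{lem:moment_estimate} for $\mathbb{E}\|X^{\bm{\mu}}_r\|_H^2\le K$, and Lemma \ref{lem:m_alpha_moment} together with \eqref{K_mu}--\eqref{z_min} for $\|\mathfrak{m}_\alpha(\mu_r)\|_H^2\le K_\mu/Z_{\min}$.

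\textbf{Stochastic term.} I apply the It\^o isometry for $Q$-Wiener integrals (Appendix \ref{app:sde in H}) and the linear growth bound of Proposition \ref{prop:diffusion}(2):
\begin{equation*}
    \mathbb{E}\Big\|\int_s^t \sigma\,\mathsf{D}\big(X^{\bm{\mu}}_r-\mathfrak{m}_\alpha(\mu_r)\big)\,\dd W^Q_r\Big\|_H^2 = \sigma^2\int_s^t \mathbb{E}\big\|\mathsf{D}(X^{\bm{\mu}}_r-\mathfrak{m}_\alpha(\mu_r))\big\|_{\mathscr{L}_Q}^2\dd r \le 2\sigma^2L_{\mathsf{D}}^2\Big(K+\frac{K_\mu}{Z_{\min}}\Big)(t-s).
\end{equation*}

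Combining the two bounds yields
\begin{equation*}
    \mathcal{W}_2(\nu_t,\nu_s)^2\le C\big(|t-s|+|t-s|^2\big),
\end{equation*}
so $t\mapsto\nu_t$ is in fact $\tfrac12$-Hölder continuous into $(\mathscr{P}_2(V),\mathcal{W}_2)$. Therefore $\Phi(\bm{\mu})\in\mathscr{C}_T$ and $\Phi$ is well defined.

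The step that needs the most care is justifying the It\^o isometry, i.e.\ checking that the integrand is in $L^2(\Omega\times[0,T];\mathscr{L}_Q)$ and predictable. This follows from the moment bound of Lemma \ref{lem:moment_estimate}, the boundedness of $\mathfrak{m}_\alpha(\mu_r)$ on $[0,T]$, and the continuity of paths of the strong solution. A second point to check is that $Z_{\min}>0$ uniformly in time; this holds by \eqref{eq:lowerbound Z} with $R^2=K_\mu$, so the appeal to the Extreme Value Theorem is not even needed.
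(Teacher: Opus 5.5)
Your proposal is correct and follows the same route as the paper. You bound $\mathcal{W}_2^2(\nu_s,\nu_t)$ by $\mathbb{E}\|X^{\bm{\mu}}_t-X^{\bm{\mu}}_s\|_H^2$ via the natural coupling, apply Cauchy--Schwarz to the drift and the It\^o isometry with Proposition~\ref{prop:diffusion} to the noise, and close with the uniform bounds from Lemmas~\ref{lem:moment_estimate} and~\ref{lem:m_alpha_moment}. Your additional remarks are valid points that the paper's proof leaves implicit: the support of $\nu_t$ in $V$ via Lemma~\ref{lem:subspace_invariance}, and obtaining $Z_{\min}>0$ directly from \eqref{eq:lowerbound Z} rather than from the Extreme Value Theorem.
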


\begin{proof}
Fix $0 \leq s < t \leq T$. We can bound the Wasserstein distance between the probability laws at time $s$ and time $t$ by the expected squared distance of the process itself, since the joint distribution of $(X^{\bm{\mu}}_s, X^{\bm{\mu}}_t)$ is an admissible coupling
\begin{equation*}
    \mathcal{W}_2^2(\nu_s, \nu_t) \leq \mathbb{E}\big[\,\|X^\mu_t - X^\mu_s\|_H^2\,\big].
\end{equation*}

Using the integral form of the decoupled SDE \eqref{eq:decoupled_sde} and the elementary inequality $(a+b)^2 \leq 2a^2 + 2b^2$, we obtain 
\begin{align*}
    \mathbb{E}\big[\,\|X^{\bm{\mu}}_t - X^{\bm{\mu}}_s\|_H^2\,\big] &\leq 2\, \mathbb{E}\left[ \left\| \int_s^t -\lambda\big(\,X^{\bm{\mu}}_r - \mathfrak{m}_\alpha(\mu_r)\,\big)\, \dd r \right\|_H^2 \right] \nonumber \\
    &\qquad \qquad \qquad + 2\, \mathbb{E}\left[ \,\left\| \int_s^t \sigma \,\mathsf{D}\big(X^{\bm{\mu}}_r - \mathfrak{m}_\alpha(\mu_r)\big)\, \dd W_r^{Q} \right\|_H^2\, \right].
\end{align*}

For the drift term, we apply the Cauchy-Schwarz inequality to the time integral. For the stochastic integral, we apply the infinite-dimensional It\^o isometry (see \eqref{eq: ito isom}) and Proposition \ref{prop:diffusion}. thus we obtain
\begin{align*}
    \mathbb{E}\big[\,\|X^{\bm{\mu}}_t - X^{\bm{\mu}}_s\|_H^2\,\big] &\leq 2\,\lambda^2 (t-s) \int_s^t \mathbb{E}\big[\,\|X^{\bm{\mu}}_r - \mathfrak{m}_\alpha(\mu_r)\|_H^2\,\big] \,\dd r  \\
    &\qquad \qquad \qquad + 2\,\sigma^2 \int_s^t \mathbb{E}\big[\,\|\mathsf{D}\big(X^{\bm{\mu}}_r - \mathfrak{m}_\alpha(\mu_r)\big)\|_{\mathscr{L}_Q}^2\,\big]\, \dd r  \\
    &\leq 2\int_s^t \big( \,\lambda^2(t-s) + \sigma^2 L_{\mathsf{D}}^2 \,\big)\, \mathbb{E}\big[\,\|X^{\bm{\mu}}_r - \mathfrak{m}_\alpha(\mu_r)\|_H^2\,\big] \, \dd r.
\end{align*}

From the uniform bounds established in Lemma \ref{lem:moment_estimate} and the bound on the consensus point from Lemma \ref{lem:m_alpha_moment}, we know there exists a global constant $M > 0$ such that $\mathbb{E}[\,\|X^{\bm{\mu}}_r - \mathfrak{m}_\alpha(\mu_r)\|_H^2\,] \leq M$ for all $r \in [0,T]$. 

Substituting this uniform bound into the integral yields
\begin{equation*}
\begin{aligned}
    \mathcal{W}_2^2(\nu_s, \nu_t) 
    & \;\leq\; \mathbb{E}\big[\|X^{\bm{\mu}}_t - X^{\bm{\mu}}_s\|_H^2\big]\; \leq\; 2\,M \big( \lambda^2 T + \sigma^2 L_{\mathsf{D}}^2 \big) |t-s|. 
\end{aligned}
\end{equation*}
Therefore, the map $t \mapsto \nu_t=\mathrm{Law}(X^{\bm{\mu}}_t)$ is continuous in the $\mathcal{W}_2$ metric. Combined with the uniform second moment bounds, this ensures the map $\bm{\mu} \mapsto \Phi({\bm{\mu}}) = \big(\mathrm{Law}(X^{\bm{\mu}}_t)\big)_{t\geq 0}$ is well-defined and maps into $\mathscr{C}_T$. 
\end{proof}

\subsection{Fixed-Point Argument and the Proof of Theorem \ref{thm:well_posedness}}
\label{sec:contraction_proof}

With the a priori moment estimates secured, we are now in a position to prove Theorem \ref{thm:well_posedness}. We will define a solution map on the Wasserstein space of probability measure flows and prove that it is a strict contraction on a sufficiently small time interval.

Let $T_0 \in (0, T]$ be a small time horizon to be determined. We consider the complete metric space $\mathscr{C}_{T_0} = C([0,T_0]; \mathscr{P}_2(V))$ equipped with the uniform Wasserstein metric 
\begin{equation}
    \mathsf{d}_{\mathscr{C}_{T_0}}(\bm{\mu}, \bm{\nu}) = \sup_{0 \leq t \leq T_0} \mathcal{W}_2(\mu_t, \nu_t), \qquad \forall\, \bm{\mu}=(\mu_t)_{t\geq 0}, \;\bm{\nu} = (\nu_{t})_{t\geq 0} \in \mathscr{C}_{T_{0}}
\end{equation}

We define the mapping $\Phi: \mathscr{C}_{T_0} \longrightarrow \mathscr{C}_{T_0}$ such that $\Phi(\bm{\mu})$ is the flow of the probability laws for the process $X^\mu$ solving the decoupled SDE \eqref{eq:decoupled_sde}, that is,
\begin{equation*}
    \Phi({\bm{\mu}})_t = \mathrm{Law}(X^{\bm{\mu}}_t), \quad \text{for all } t \in [0,T_0].
\end{equation*}
By Lemma \ref{lem:moment_estimate} and Lemma \ref{lem:time_continuity}, the decoupled process $X^{\bm{\mu}}_t$ possesses uniformly bounded second moments and its law is continuous in time. This ensures the mapping $\Phi$ is well-defined, satisfying $\Phi({\bm{\mu}}) \in \mathscr{C}_{T_0}$. A fixed point of this mapping, i.e., a probability measure flow ${\bm{\mu}}^*$ such that $\Phi({\bm{\mu}}^*) = {\bm{\mu}}^*$, corresponds exactly to the strong solution of the original McKean-Vlasov SDE \eqref{eq:mckean_vlasov_cbo}.

Let $M_0 = \mathbb{E}[\,\|\xi\|_H^2\,] < \infty$ be the initial second moment. We fix an arbitrary radius $R > 0$ such that $R^2 > M_0$. Let $T_0 \in (0, T]$ be a small time horizon to be determined. 

We define $\mathscr{C}_{T_0, R}$ as the space of continuous probability measure flows whose second moments are uniformly bounded by $R^2$ 
\begin{equation}\label{invariant ball}
    \mathscr{C}_{T_0, R} = \left\{ {\bm{\mu}} \in C([0,T_0]; \mathscr{P}_2(V))\; : \; \sup_{0 \leq t \leq T_0} \int_V \|x\|_H^2 \,\mu_t(\dd x) \leq R^2 \right\}.
\end{equation}
Equipped with the uniform Wasserstein metric $\mathsf{d}_{\mathscr{C}_{T_0}}({\bm{\mu}}, {\bm{\nu}}) = \sup_{0 \leq t \leq T_0} \mathcal{W}_2(\mu_t, \nu_t)$, the space $\mathscr{C}_{T_0, R}$ is a complete metric space. We define the measure-mapping operator $\Phi({\bm{\mu}})_t = \mathrm{Law}(X^{\bm{\mu}}_t)$, where $X^{\bm{\mu}}_t$ is the solution to the decoupled SDE \eqref{eq:decoupled_sde}.

\begin{lemma}[Invariance of $\mathscr{C}_{T_0, R}$]
\label{lem:invariance}
In the situation of Lemma \ref{lem:moment_estimate}, 
there exists a time horizon $T_1 > 0$ such that for all $T_0 \leq T_1$, the mapping $\Phi$ maps $\mathscr{C}_{T_0, R}$ into itself.
\end{lemma}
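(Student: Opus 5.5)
The plan is to make the Gr\"onwall estimate \eqref{eq:second moment apriori} uniform over $\bm{\mu}\in\mathscr{C}_{T_0,R}$. Then, since $M_0<R^2$ strictly, a short enough time horizon keeps the second moment of $\Phi(\bm{\mu})$ below $R^2$. Continuity in time of $\Phi(\bm{\mu})$, and support in $V$, are already given by Lemma \ref{lem:time_continuity} and Lemma \ref{lem:subspace_invariance}. So the only thing to verify is the moment constraint in \eqref{invariant ball}.

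\textbf{Step 1} (uniform bound on the consensus point). For $\bm{\mu}\in\mathscr{C}_{T_0,R}$, each $\mu_t$ lies in $\mathscr{P}_{2,R}(H)$. By \eqref{eq:lowerbound Z} in the proof of Lemma \ref{lem:local_lipschitz_m} we then have $Z(\mu_t)\geq Z_{\min}(R)=\tfrac12 e^{-\alpha M_R}$, which depends only on $R$ and not on the flow. Lemma \ref{lem:m_alpha_moment} then gives
\[
\sup_{0\le t\le T_0}\|\mathfrak{m}_\alpha(\mu_t)\|_H^2\leq \frac{R^2}{Z_{\min}(R)}=:K_R .
\]
This replaces the flow-dependent quantity $K_\mu/Z_{\min}$ of Lemma \ref{lem:moment_estimate} with a constant depending only on $R$, $\alpha$ and $\mathscr{E}$.

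\textbf{Step 2} (Gr\"onwall on a short interval). We repeat the It\^o computation \eqref{eq:ito_norm_squared}, together with the bounds on the drift and the trace term via Proposition \ref{prop:diffusion}. This yields, for $t\le T_0$,
\[
\mathbb{E}\|X^{\bm{\mu}}_t\|_H^2\leq M_0 + C_2 K_R\, t + \int_0^t C_1\,\mathbb{E}\|X^{\bm{\mu}}_s\|_H^2\,\dd s .
\]
Under $\lambda>2\sigma^2L_{\mathsf D}^2$ we have $C_1<0$, so the integral term can simply be dropped. Alternatively, we can use $|C_1|$ and Gr\"onwall, which avoids relying on the sign of $C_1$. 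Either way,
\[
\sup_{0\le t\le T_0}\mathbb{E}\|X^{\bm{\mu}}_t\|_H^2\leq (M_0+C_2K_R T_0)\,e^{|C_1|T_0}.
\]
We then choose $T_1>0$ so that $(M_0+C_2K_RT_1)e^{|C_1|T_1}\leq R^2$. Such a $T_1$ exists because the left side is continuous in $T_1$ and equals $M_0<R^2$ at $T_1=0$. The same bound then holds for every $T_0\le T_1$. Since $C_1,C_2,K_R$ do not depend on $\bm{\mu}$, this $T_1$ is uniform over the ball.

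\textbf{Main obstacle.} The delicate point is justifying that the stochastic integral in \eqref{eq:ito_norm_squared} has zero expectation. A priori we do not know that $\mathbb{E}\int_0^{T_0}\|X_s\|^2\|X_s-\mathfrak{m}_\alpha(\mu_s)\|^2\,\dd s<\infty$, since that would require fourth moments. I would handle this by localization. Introduce the stopping times $\tau_n=\inf\{t:\|X^{\bm{\mu}}_t\|_H\geq n\}$, apply the estimate to $X_{t\wedge\tau_n}$ (where the integrand is bounded, so the martingale property holds), and obtain the same bound uniformly in $n$. Then let $n\to\infty$ using Fatou's lemma together with path continuity, which gives $\tau_n\to\infty$ almost surely. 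With this in place, the conclusion $\Phi(\mathscr{C}_{T_0,R})\subset\mathscr{C}_{T_0,R}$ follows.
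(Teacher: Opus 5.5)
Your proposal is correct and follows the paper's argument. You bound $\|\mathfrak{m}_\alpha(\mu_t)\|_H^2$ uniformly by $R^2/Z_{\min}(R)$ using the partition-function lower bound together with Lemma~\ref{lem:m_alpha_moment}, rerun the It\^o--Gr\"onwall estimate to obtain a $\bm{\mu}$-independent bound that tends to $M_0<R^2$ as $t\to 0$, and choose $T_1$ accordingly. Your additional care only makes explicit steps the paper leaves implicit: you drop the $C_1$ term or use $|C_1|$ instead of applying Gr\"onwall with a negative coefficient, and you localize with $\tau_n$ to justify that the stochastic integral has zero mean.
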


\begin{proof}
Let ${\bm{\mu}} \in \mathscr{C}_{T_0, R}$. By definition, $\mu_t \in \mathscr{P}_{2,R}(H)$ for all $t \in [0, T_0]$. Following the same approach as for the a priori moment estimate in Lemma \ref{lem:moment_estimate}, one obtains a bound similar to \eqref{eq:second moment apriori}, that is
\begin{equation*}
    \mathbb{E}\big[\|X^{\bm{\mu}}_t\|_H^2\big] \leq \left( M_0 + C_2 \frac{R^2}{Z_{\min}(R)} \, t \right) \exp(C_1 \,t), \quad \text{for all } t \in [0, T_0],
\end{equation*}
where $C_1$ and $C_2$ are constants depending on $\lambda, \sigma$, and $L_{\mathsf{D}}$, but independent of $T_0$. 
As $t \to 0$, the right-hand side converges  to $M_0$. Since we chose $R^2 > M_0$, there exists a sufficiently small time horizon $T_1 > 0$ such that for all $t \leq T_1$
\begin{equation*}
    \left( M_0 + C_2 \frac{R^2}{Z_{\min}(R)}\, t \right) \exp(C_1 \,t) \leq R^2.
\end{equation*}
Therefore, if we choose any $T_0 \leq T_1$, we guarantee $\sup_{t \in [0, T_0]} \mathbb{E}[\|X^{\bm{\mu}}_t\|_H^2] \leq R^2$, meaning $\Phi({\bm{\mu}}) \in \mathscr{C}_{T_0, R}$.
\end{proof}

We are now ready to prove contraction using synchronous coupling. 

\begin{lemma}[Strict Contraction of $\Phi$]
\label{lem:contraction}
Suppose $\lambda > 2\sigma^2\, L_{\mathsf{D}}^2$.
There exists a time horizon $T_0 \in (0, T_1]$ such that the map $\Phi$ is a strict contraction on $\mathscr{C}_{T_0, R}$.
\end{lemma}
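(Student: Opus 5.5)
The plan is to use a synchronous coupling. Fix $\bm{\mu},\bm{\nu}\in\mathscr{C}_{T_0,R}$ and let $X_t:=X^{\bm{\mu}}_t$ and $Y_t:=X^{\bm{\nu}}_t$ solve the decoupled SDE \eqref{eq:decoupled_sde}. Both are driven by the same $Q$-Wiener process $W^Q$ and start from the same initial datum $\xi$. Since the joint law of $(X_t,Y_t)$ is a coupling of $\Phi(\bm{\mu})_t$ and $\Phi(\bm{\nu})_t$, we have
\[
\mathcal{W}_2^2(\Phi(\bm{\mu})_t,\Phi(\bm{\nu})_t)\le \mathbb{E}\big[\|X_t-Y_t\|_H^2\big].
\]
It therefore suffices to bound $\mathbb{E}[\|X_t-Y_t\|_H^2]$ by $\kappa(T_0)\,\mathsf{d}_{\mathscr{C}_{T_0}}(\bm{\mu},\bm{\nu})^2$ for some $\kappa(T_0)<1$.

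\textbf{Step 1 (It\^o formula for the difference).} Set $e_t:=X_t-Y_t$, so that $e_0=0$, and write $\delta_t:=\mathfrak{m}_\alpha(\mu_t)-\mathfrak{m}_\alpha(\nu_t)$. The drift of $e_t$ is $-\lambda e_t+\lambda\delta_t$. Its diffusion coefficient is
\[
\sigma\big(\mathsf{D}(X_t-\mathfrak{m}_\alpha(\mu_t))-\mathsf{D}(Y_t-\mathfrak{m}_\alpha(\nu_t))\big).
\]
We apply the infinite-dimensional It\^o formula (Theorem \ref{thm:Ito in H}) to $\|e_t\|_H^2$ and take expectations, which kills the martingale term. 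For the drift, Young's inequality gives
\[
2\lambda\langle e,\delta\rangle\le \lambda\|e\|^2+\lambda\|\delta\|^2,
\]
so the drift contributes at most $-2\lambda\|e\|^2+\lambda\|e\|^2+\lambda\|\delta\|^2$. For the trace term, the Lipschitz property of $\mathsf{D}$ in Proposition \ref{prop:diffusion}(1) gives
\[
\sigma^2\|\mathsf{D}(\cdot)-\mathsf{D}(\cdot)\|_{\mathscr{L}_Q}^2\le \sigma^2L_{\mathsf{D}}^2\|e-\delta\|^2\le 2\sigma^2L_{\mathsf{D}}^2\big(\|e\|^2+\|\delta\|^2\big).
\]
Putting these together yields
\[
\frac{\dd}{\dd t}\mathbb{E}\|e_t\|^2\le(-\lambda+2\sigma^2L_{\mathsf{D}}^2)\,\mathbb{E}\|e_t\|^2+(\lambda+2\sigma^2L_{\mathsf{D}}^2)\|\delta_t\|^2 .
\]
Under the hypothesis $\lambda>2\sigma^2L_{\mathsf{D}}^2$ the first coefficient $C_1$ is negative, so we may simply drop that term.

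\textbf{Step 2 (Lipschitz bound on the consensus point).} Because $\bm{\mu},\bm{\nu}\in\mathscr{C}_{T_0,R}$, every $\mu_t,\nu_t$ lies in $\mathscr{P}_{2,R}(H)$. Lemma \ref{lem:local_lipschitz_m} then gives
\[
\|\delta_t\|\le L_{\mathfrak{m}}(R)\,\mathcal{W}_2(\mu_t,\nu_t)\le L_{\mathfrak{m}}(R)\,\mathsf{d}_{\mathscr{C}_{T_0}}(\bm{\mu},\bm{\nu}).
\]
Integrating the differential inequality from Step 1 with $e_0=0$, we obtain
\[
\sup_{t\le T_0}\mathbb{E}\|e_t\|^2\le C_2L_{\mathfrak{m}}(R)^2\,T_0\,\mathsf{d}_{\mathscr{C}_{T_0}}(\bm{\mu},\bm{\nu})^2 .
\]
Choose $T_0\le T_1$ with $C_2L_{\mathfrak{m}}(R)^2T_0<1$, where $T_1$ comes from Lemma \ref{lem:invariance} so that $\Phi$ maps $\mathscr{C}_{T_0,R}$ into itself. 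Then $\kappa:=(C_2L_{\mathfrak{m}}(R)^2T_0)^{1/2}<1$ is the required contraction constant.

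\textbf{Main obstacle.} The main difficulty is justifying Step 1 rigorously, in particular that the stochastic integral is a true martingale so that its expectation vanishes. This follows because $\mathsf{D}$ has linear growth (Proposition \ref{prop:diffusion}(2)), the second moments are uniformly bounded (Lemma \ref{lem:moment_estimate}), and $\|\mathfrak{m}_\alpha\|$ is bounded on $\mathscr{P}_{2,R}$ (Lemma \ref{lem:m_alpha_moment} together with $Z\ge Z_{\min}(R)$). These bounds make the integrand square-integrable in $\mathscr{L}_Q$. A further point to check is that the constant $L_{\mathfrak{m}}(R)$ is uniform over the whole ball $\mathscr{C}_{T_0,R}$. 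This is exactly why the invariance result Lemma \ref{lem:invariance} is needed first: without it, the flows $\Phi(\bm{\mu})$ could leave the set on which the Lipschitz bound holds.
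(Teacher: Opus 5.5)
Your proposal is correct and is essentially the paper's own proof: synchronous coupling with the same $\xi$ and $W^Q$, It\^o's formula for $\|X^{\bm{\mu}}_t-X^{\bm{\nu}}_t\|_H^2$ with exactly the constants $C_1=-\lambda+2\sigma^2L_{\mathsf{D}}^2$ and $C_2=\lambda+2\sigma^2L_{\mathsf{D}}^2$, the local Lipschitz bound of Lemma~\ref{lem:local_lipschitz_m} on $\mathscr{C}_{T_0,R}$, and a small $T_0\le T_1$. Your one deviation is to discard the $C_1$-term because $C_1<0$ instead of invoking Gr\"onwall, and this is the cleaner route: when $C_1<0$, the paper's factor $\exp(C_1 t)$ in front of $\int_0^t\mathcal{W}_2^2(\mu_s,\nu_s)\,\dd s$ does not follow from Gr\"onwall (the correct kernel is $e^{C_1(t-s)}\le 1$, which gives your bound); note, however, that killing the martingale term requires localization, or $\mathbb{E}\sup_t\|X_t\|_H^2<\infty$ together with Burkholder--Davis--Gundy, rather than uniform second moments alone, because the integrand is only bounded by a multiple of $\|e_s\|_H\|e_s-\delta_s\|_H$.
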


\begin{proof}
Let ${\bm{\mu}}, {\bm{\nu}} \in \mathscr{C}_{T_0, R}$. Because both flows remain inside the invariant ball \eqref{invariant ball}, $\mu_t, \nu_t \in \mathscr{P}_{2,R}(V)$ for all $t \in [0, T_0]$. Consequently, we can apply the localized Lipschitz bound from Lemma \ref{lem:local_lipschitz_m} and obtain 
\begin{equation*}
    \|\mathfrak{m}_\alpha(\mu_t) - \mathfrak{m}_\alpha(\nu_t)\|_H \leq L_\mathfrak{m}(R) \, \mathcal{W}_2(\mu_t, \nu_t).
\end{equation*}

Next, we couple the processes $X^{\bm{\mu}}_t$ and $X^{\bm{\nu}}_t$ synchronously, driving both with the identical initial condition $\xi$ and identical $Q$-Wiener process $W_t^Q$. The Wasserstein distance is bounded by this coupling
\begin{equation*}
    \mathcal{W}_2^2(\Phi(\mu)_t, \Phi(\nu)_t) \leq \mathbb{E}\big[\|X^{\bm{\mu}}_t - X^{\bm{\nu}}_t\|_H^2\big].
\end{equation*}

Applying the infinite-dimensional It\^o formula (Theorem \ref{thm:Ito in H}) to $\|X^{\bm{\mu}}_t - X^{\bm{\nu}}_t\|_H^2$, taking the expectation to eliminate the martingale term, and applying standard Cauchy-Schwarz and Lipschitz bounds (as derived in Section \ref{sec:moment_estimates}), we obtain 
\begin{equation*}
    \mathbb{E}\big[\,\|X^{\bm{\mu}}_t - X^{\bm{\nu}}_t\|_H^2\,\big] \leq \int_0^t \tilde{C}_1 \,\mathbb{E}\big[\,\|X^{\bm{\mu}}_s - X^{\bm{\nu}}_s\|_H^2\,\big]\, \dd s + \int_0^t \tilde{C}_2\, \|\mathfrak{m}_\alpha(\mu_s) - \mathfrak{m}_\alpha(\nu_s)\|_H^2\, \dd s,
\end{equation*}
where 
\[
    \tilde{C}_1 = -\lambda + 2\sigma^2\, L_{\mathsf{D}}^2 \qquad \text{ and } \qquad \tilde{C}_2 = \lambda + 2\sigma^2\, L_{\mathsf{D}}^2.
\]

Substituting the local Lipschitz bound $L_\mathfrak{m}(R)$, we get 
\begin{equation*}
    \mathbb{E}\big[\|X^{\bm{\mu}}_t - X^{\bm{\nu}}_t\|_H^2\big] \leq \int_0^t \tilde{C}_1 \mathbb{E}\big[\|X^{\bm{\mu}}_s - X^{\bm{\nu}}_s\|_H^2\big] \dd s + \int_0^t \tilde{C}_2 L_m(R)^2 \mathcal{W}_2^2(\mu_s, \nu_s)\, \dd s.
\end{equation*}

By Gr\"onwall's Lemma, this yields 
\begin{equation*}
    \mathbb{E}\big[\|X^{\bm{\mu}}_t - X^{\bm{\nu}}_t\|_H^2\big] \leq \tilde{C}_2 L_m(R)^2 \exp(\tilde{C}_1 \,t) \int_0^t \mathcal{W}_2^2(\mu_s, \nu_s)\, \dd s.
\end{equation*}

Taking the supremum over $[0, T_0]$ gives 
\begin{equation*}
    \sup_{0 \leq t \leq T_0} \mathcal{W}_2^2(\Phi({\bm{\mu}})_t, \Phi({\bm{\nu}})_t) \leq \big[ \tilde{C}_2 L_m(R)^2 T_0 \exp(\tilde{C}_1\, T_0) \big] \sup_{0 \leq t \leq T_0} \mathcal{W}_2^2(\mu_t, \nu_t).
\end{equation*}

Because $\tilde{C}_1, \tilde{C}_2$, and $L_m(R)$ are fixed constants independent of $T_0$, we can choose $T_0$ such that the prefactor satisfies 
\begin{equation*}
    \tilde{C}_2 L_m(R)^2 T_0 \exp(\tilde{C}_1 T_0) < 1.
\end{equation*}
For this choice of $T_0$, the mapping $\Phi$ is a strict contraction on the complete metric space $\mathscr{C}_{T_0, R}$. By Banach's Fixed-Point Theorem, there exists a unique probability measure flow ${\bm{\mu}}^* \in \mathscr{C}_{T_0, R}$ such that $\Phi({\bm{\mu}}^*) = {\bm{\mu}}^*$.
\end{proof}

\subsection*{Conclusion of the Proof of Theorem \ref{thm:well_posedness}}

To recover the strong solution to the original McKean-Vlasov dynamics, we substitute this unique fixed point back into the decoupled equation
\begin{equation*}
    \dd X_t^* = -\lambda \big(X_t^* - \mathfrak{m}_\alpha(\mu_t^*)\big)\dd t + \sigma \|X_t^* - \mathfrak{m}_\alpha(\mu_t^*)\|_H \mathsf{P}_V \,\dd W_t^Q, \quad X_0^* = \xi.
\end{equation*}
Because the mapping $t \mapsto \mathfrak{m}_\alpha(\mu_t^*)$ is now a fixed, deterministic, and continuous function of time, this reduces to a standard stochastic differential equation with globally Lipschitz coefficients. By Theorem \ref{thm:solution}, this SDE admits a unique strong solution $X^* \in L^2\big(\Omega; C([0,T_0]; H)\big)$. By construction, the law of $X^*$ is exactly $\bm{\mu}^*$, thereby satisfying the non-linear McKean-Vlasov equation \eqref{eq:mckean_vlasov_cbo_P}. Pathwise uniqueness is guaranteed: any other strong solution $Y$ must possess a law that is a fixed point of $\Phi$. By the uniqueness of $\bm{\mu}^*$, $Y$ must solve the identical decoupled SDE, and thus $Y = X^*$ almost surely.

The local existence and uniqueness on $[0, T_0]$ can be extended to the arbitrary time horizon $[0, T]$. Note that the choice of $T_0$ in Lemma \ref{lem:contraction} depends on $R$, which in turn depends on the second moment of the initial condition. By the uniform moment estimate in Lemma \ref{lem:moment_estimate}, the second moment of the solution $\mathbb{E}[\|X_t\|^2]$ remains bounded on any finite interval. Thus, one can iterate the fixed-point argument on $[T_0, 2T_0]$, $[2T_0, 3T_0]$, and so on. Since the second moment cannot blow up in finite time, the sequence of time intervals will cover $[0, T]$ in a finite number of steps, establishing global well-posedness.

\section{Convergence Guarantees}
\label{sec:convergence}

We analyze the standard mean-field CBO dynamics \eqref{eq:mckean_vlasov_cbo_P} in a separable Hilbert space $H$, which is given by the McKean-Vlasov SDE 
\begin{equation} \label{eq:mckean_vlasov_cbo_degenerate}
    \dd X_t = -\lambda \big(X_t - \mathfrak{m}_\alpha(\mu_t)\big)\,\dd t + \sigma \,\mathsf{D}\big(X_t - \mathfrak{m}_{\alpha}(\mu_t)\big)\, \dd W_t^Q, \quad X_0 = \xi,
\end{equation}
where $\mathsf{D}$ given by \eqref{def:diffusion}, is a Lipschitz continuous multiplicative diffusion operator with Lipschitz constant $L_{\mathsf{D}}$, and $W_t^Q$ is a $Q$-Wiener process with a strictly positive, trace-class covariance operator $Q$ (see \S\ref{subsec:q_wiener} and Appendix \ref{app:noise}).
 
To establish global convergence to the unique minimizer $x^*$ in the infinite-dimensional setting, we must carefully decouple the temperature limit ($\alpha \to \infty$) from the time evolution to avoid the exponential mass decay associated with the small ball problem in $H$. We achieve this through a sequence of three interconnected steps 
\begin{enumerate}[label = \arabic*.]
    \item \textbf{Strict Positivity of Mass:} We adapt a smooth mollifier technique to the infinite-dimensional It\^o calculus. By carefully bounding the trace of the composition operator, we prove the swarm maintains a strictly positive probability mass in any neighborhood of $x^*$.
    \item \textbf{Quantitative Laplace Contraction:} With the strictly positive mass, we can apply Jensen's inequality to demonstrate that for a sufficiently large $\alpha$, the consensus point $\mathfrak{m}_{\alpha}(\mu_t)$ is drawn strictly closer to the minimizer than the average variance of the swarm. 
    \item \textbf{Finite-Time Exponential Convergence:} Finally, we analyze the time evolution of the swarm's expected squared distance. By fixing a target accuracy $\varepsilon$ and evaluating the dynamics over a finite time horizon $T_\varepsilon$, we control the lower-bound of the mass and demonstrate that the error decays exponentially fast before reaching the $\varepsilon$-neighborhood.
\end{enumerate}

\begin{lemma}[Active-Subspace Bounds for Degenerate Diffusion] \label{lem:degenerate_covariance_bounds}
Suppose the finite-dimensional active subspace $V \subset H$ and the trace-class covariance operator $Q$ satisfy Assumption \ref{assump:active_subspace}. 
Furthermore, assume the state-dependent multiplicative diffusion operator $\mathsf{D} : H \to \mathscr{L}(H)$ satisfies the regularity and structural conditions obtained in Proposition \ref{prop:diffusion}. 

Then, the degenerate diffusion covariance operator $\Sigma(x) = \sigma^2 \mathsf{D}(x - \mathfrak{m}_{\alpha}) Q \mathsf{D}(x - \mathfrak{m}_{\alpha})^*$ satisfies the following active-subspace bounds for all $x, \mathfrak{m}_{\alpha} \in H$ and $v\in V$ 
\begin{align}
    \langle v, \Sigma(x) v \rangle_H &\geq c_1 \|x - \mathfrak{m}_{\alpha}\|_H^2 \|v\|_H^2, 
    \label{eq:sigma_lower_deg} \\
    \mathrm{Tr}(\Sigma(x)) &\leq C_2 \|x - \mathfrak{m}_{\alpha}\|_H^2, 
    \label{eq:sigma_trace_deg}
\end{align}
where $c_1 = \sigma^2 \lambda_{\min}  > 0$, $\lambda_{\min}$ being as defined in statement (iii) of Assumption \ref{assump:active_subspace}, and $C_2 =\sigma^2 \mathrm{Tr}(\mathsf{P}_V Q \mathsf{P}_V) > 0$.
\end{lemma}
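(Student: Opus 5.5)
The plan is to write $\Sigma(x)$ out explicitly using the definition \eqref{def:diffusion} of $\mathsf{D}$, and then read off both bounds from Assumption \ref{assump:active_subspace} and Proposition \ref{prop:diffusion}. Set $r := \|x - \mathfrak{m}_{\alpha}\|_H$. By Step 3 of the proof of Proposition \ref{prop:diffusion}, $\mathsf{D}(x-\mathfrak{m}_\alpha)^* = r\,\mathsf{P}_V$. Hence
\begin{equation*}
    \Sigma(x) = \sigma^2 r^2\, \mathsf{P}_V Q \mathsf{P}_V .
\end{equation*}
This is a self-adjoint, positive semi-definite, finite-rank operator (its range lies in $V$, which has dimension $d$), so its trace is finite.

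For the lower bound \eqref{eq:sigma_lower_deg}, fix $v\in V$. Since $\mathsf{P}_V$ is self-adjoint and $\mathsf{P}_V v = v$, we get
\begin{equation*}
    \langle v, \Sigma(x) v\rangle_H = \sigma^2 r^2 \langle \mathsf{P}_V v, Q\mathsf{P}_V v\rangle_H = \sigma^2 r^2\langle v, Qv\rangle_H .
\end{equation*}
Assumption \ref{assump:active_subspace}(ii) then gives $\langle v,Qv\rangle_H \ge \lambda_{\min}\|v\|_H^2$, which yields \eqref{eq:sigma_lower_deg} with $c_1=\sigma^2\lambda_{\min}$. Equivalently, this follows from property (3) of Proposition \ref{prop:diffusion}, written as $\langle \mathsf{D}^*v, Q\,\mathsf{D}^* v\rangle = r^2\langle v,Qv\rangle$.

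For the trace bound \eqref{eq:sigma_trace_deg}, linearity of the trace on trace-class operators gives
\begin{equation*}
    \mathrm{Tr}(\Sigma(x)) = \sigma^2 r^2\,\mathrm{Tr}(\mathsf{P}_V Q\mathsf{P}_V),
\end{equation*}
which is in fact an equality, so \eqref{eq:sigma_trace_deg} holds with $C_2=\sigma^2\mathrm{Tr}(\mathsf{P}_VQ\mathsf{P}_V)$. It can equivalently be stated as $\sigma^2\|\mathsf{D}(x-\mathfrak{m}_\alpha)\|_{\mathscr{L}_Q}^2 = \sigma^2 L_{\mathsf{D}}^2 r^2$, in line with Proposition \ref{prop:diffusion}(2). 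To see that the trace is finite, compute it in an orthonormal basis adapted to $H=V\oplus V^\perp$: the $V^\perp$ vectors contribute zero, so $\mathrm{Tr}(\mathsf{P}_VQ\mathsf{P}_V)=\sum_{j=1}^d\langle f_j,Qf_j\rangle \le d\,\|Q\| \le \mathrm{Tr}(Q)\,d<\infty$, where $(f_j)_{j=1}^d$ is an orthonormal basis of $V$.

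It remains to check $C_2>0$. By Assumption \ref{assump:active_subspace}(ii), $C_2 \ge \sigma^2 d\,\lambda_{\min}>0$, provided $d\ge1$.

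The only point needing care, and it is a mild one, is justifying the operator identities in the infinite-dimensional setting: the adjoint of $\mathsf{D}$, the trace being independent of the chosen basis, and $\mathsf{P}_VQ\mathsf{P}_V$ being trace class. Everything else is a direct computation, so I expect no real obstacle.
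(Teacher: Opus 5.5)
Your proof is correct. For the lower bound you follow the paper's route: move $\mathsf{D}(x-\mathfrak{m}_\alpha)$ across via its adjoint, use positivity of $Q$ on $V$, then apply property (3) of Proposition~\ref{prop:diffusion}. You make it explicit by noting $\mathsf{D}(x-\mathfrak{m}_\alpha)^*v=\|x-\mathfrak{m}_\alpha\|_H\,v$.

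For the trace bound your route is genuinely different, and it is the better one. The paper never computes $\Sigma(x)$. Instead it applies the generic estimate $\mathrm{Tr}(AQA^*)\le\|A\|_{\mathrm{op}}^2\,\mathrm{Tr}(Q)$ with $A=\mathsf{D}(x-\mathfrak{m}_\alpha)$, followed by an operator-norm linear-growth bound. It ends with the constant $\sigma^2L_{\mathsf{D}}^2\,\mathrm{Tr}(Q)=\sigma^2\,\mathrm{Tr}(\mathsf{P}_VQ\mathsf{P}_V)\,\mathrm{Tr}(Q)$. That argument has the merit of not using the specific form of $\mathsf{D}$, but it has two problems:
\begin{itemize}
\item It does not produce the constant $C_2=\sigma^2\,\mathrm{Tr}(\mathsf{P}_VQ\mathsf{P}_V)$ announced in the statement.
\item Its step $\|\mathsf{D}(y)\|_{\mathrm{op}}\le L_{\mathsf{D}}\|y\|_H$ is not what Proposition~\ref{prop:diffusion}(2) provides, since that is an $\mathscr{L}_Q$-norm bound. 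In fact $\|\mathsf{D}(y)\|_{\mathrm{op}}=\|y\|_H$ for $d\ge1$, which exceeds $L_{\mathsf{D}}\|y\|_H$ whenever $\mathrm{Tr}(\mathsf{P}_VQ\mathsf{P}_V)<1$. A correct version of that route only gives $\sigma^2\,\mathrm{Tr}(Q)$.
\end{itemize}
Your exact identity $\Sigma(x)=\sigma^2\|x-\mathfrak{m}_\alpha\|_H^2\,\mathsf{P}_VQ\mathsf{P}_V$ avoids both problems. Equivalently, $\mathrm{Tr}(\Sigma(x))=\sigma^2\|\mathsf{D}(x-\mathfrak{m}_\alpha)\|_{\mathscr{L}_Q}^2$ combined with Proposition~\ref{prop:diffusion}(2). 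This gives precisely the stated constant, which is sharp. Your check $C_2\ge\sigma^2 d\,\lambda_{\min}>0$ also supplies the positivity that the paper only asserts.
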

\begin{remark}
    It should be noted that incorporating $\mathsf{P}_V$ in $\mathsf{D}$ acts as a filter against the infinite-dimensional tail $V^\perp$. Consequently, the bounding constants are those obtained at the finite-dimensional setting: the exploration lower bound is driven by the minimum active eigenvalue ($c_1 = \sigma^2 \lambda_{\min}$), while the  upper bound is driven by the noise energy isolated to the active subspace ($C_2 = \sigma^2 \mathrm{Tr}(\mathsf{P}_V Q \mathsf{P}_V)$).
\end{remark}

\begin{proof}
We prove each inequality separately. 

\textbf{Step 1.} \textit{(The lower bound \eqref{eq:sigma_lower_deg})} 
Let $v \in H$ be an arbitrary vector in the active subspace. Expanding the quadratic form of the covariance operator and moving the left-most diffusion operator via its adjoint yields
\begin{equation} \label{eq:adj_move}
    \langle v, \Sigma(x) v \rangle_H = \sigma^2 \,\langle \mathsf{D}(x - \mathfrak{m}_{\alpha})^* v, \, Q \mathsf{D}(x - \mathfrak{m}_{\alpha})^* v \rangle_H.
\end{equation}
Let $w = \mathsf{D}(x - \mathfrak{m}_{\alpha})^* v=\mathsf{D}(x - \mathfrak{m}_{\alpha}) v$. By the definition of the operator $ \mathsf{D}$,  the transformed vector $w \in V$. Therefore, we may invoke the local strict positivity of $Q$ from Assumption \ref{assump:active_subspace}, yielding 
\begin{equation*}
    \langle w, Q w \rangle_H \geq \lambda_{\min}\,\|w\|_H^2.
\end{equation*}
Applying this to \eqref{eq:adj_move} gives 
\begin{equation*}
    \langle v, \Sigma(x) v \rangle_H \geq \sigma^2 \lambda_{\min} \|\mathsf{D}(x - \mathfrak{m}_{\alpha})^* v\|_H^2.
\end{equation*}
Finally, we apply the coercivity property of Proposition \ref{prop:diffusion}(3) to lower bound the energy injected into the active subspace by the distance to the consensus point
\begin{equation*}
    \langle v, \Sigma(x) v \rangle_H \geq \sigma^2 \lambda_{\min} \Big( \|x - \mathfrak{m}_{\alpha}\|_H^2 \, \|v\|_H^2 \Big).
\end{equation*}
Letting $c_1 = \sigma^2 \lambda_{\min}>0 $ completes the proof of the lower bound.

\textbf{Step 2.} \textit{(The upper bound \eqref{eq:sigma_trace_deg})} 
For any bounded linear operator $A$ and positive trace-class operator $Q$, standard functional analysis (see Appendix \ref{app:norms}) yields $\mathrm{Tr}(A Q A^*) \leq \|A\|_{\mathrm{op}}^2 \mathrm{Tr}(Q)$. Applying this to the diffusion covariance $\Sigma(x)$ with $A = \mathsf{D}(x - \mathfrak{m}_{\alpha})$ yields
\begin{equation*}
    \mathrm{Tr}(\Sigma(x)) \leq \sigma^2\, \|\mathsf{D}(x - \mathfrak{m}_{\alpha})\|_{\mathrm{op}}^2\, \mathrm{Tr}(Q).
\end{equation*}
Using the linear Growth and vanishing-at-zero property of Proposition \ref{prop:diffusion}(2), the operator norm is bounded by the distance
\begin{equation*}
    \|\mathsf{D}(x - \mathfrak{m}_{\alpha})\|_{\mathrm{op}}^2 \leq L_{\mathsf{D}}^2 \,\|x - \mathfrak{m}_{\alpha}\|_H^2. 
\end{equation*}
Substituting this into the trace inequality yields
\begin{equation}
    \mathrm{Tr}(\Sigma(x)) \leq \sigma^2 \Big( L_{\mathsf{D}}^2 \|x - \mathfrak{m}_{\alpha}\|_H^2 \Big) \,\mathrm{Tr}(Q).
\end{equation}
Because $Q$ is globally trace-class, $\mathrm{Tr}(Q) < \infty$. Letting the positive constant $C_2 = \sigma^2 L_{\mathsf{D}}^2 \mathrm{Tr}(Q)$  completes the proof of the trace upper bound. 
\end{proof}

We can now establish a positive probability mass in any neighborhood of $x^*$. To this end, we define $\phi_r: H \to [0,1]$, a smooth mollifier compactly supported on $B_r(x^*)$ such as
\begin{equation*}
    \phi_r(x) = \begin{cases} 
    \exp\left( 1 - \frac{r^2}{r^2 - \|x - x^*\|_H^2} \right), & \text{if } \|x - x^*\|_H < r \\ 
    0, & \text{otherwise.}
    \end{cases}
\end{equation*}
Let $y(x) = \|x - x^*\|_H^2$. Inside the ball, the Fr\'echet derivatives of $\phi_r(x)$ in $H$ are 
\begin{align*}
    D\phi_r(x) &= -\frac{2r^2}{(r^2 - y(x))^2} \phi_r(x) (x - x^*), \\
    D^2\phi_r(x) &= \phi_r(x) \left[ \frac{4r^2 (2y(x)-r^{2})}{(r^2 - y(x))^4} (x - x^*) \otimes (x - x^*) - \frac{2r^2}{(r^2 - y(x))^2} I \right]. 
\end{align*}
Here, $I$ is the identity operator in $H$. 

Applying the infinite-dimensional It\^o formula (Theorem \ref{thm:Ito in H}) to $\mathbb{E}[\phi_r(X_t)]$ over the interval $t \in [0,T]$ yields 
\begin{equation*} 
\begin{aligned}
    \frac{\dd}{\dd t} \mathbb{E}[\phi_r(X_t)] 
    & = \mathbb{E}\Big[  \langle D\phi_r(X_t), -\lambda(X_t - \mathfrak{m}_{\alpha}) \rangle_H +  \frac{\sigma^2}{2} \mathrm{Tr}\Big( \mathsf{D}(X_t - \mathfrak{m}_{\alpha}) Q \mathsf{D}(X_t - \mathfrak{m}_{\alpha})^* D^2\phi_r(X_t) \Big)  \Big]\\
    & = \mathbb{E}[\,T_1 \, + \, T_2\,]
\end{aligned}
\end{equation*}
where
\begin{align*}
    & T_1 := \langle D\phi_r(X_t), -\lambda(X_t - \mathfrak{m}_{\alpha}) \rangle_H,\\
    & T_2 := \frac{\sigma^2}{2} \,\mathrm{Tr}\Big( \mathsf{D}(X_t - \mathfrak{m}_{\alpha}) Q \mathsf{D}(X_t - \mathfrak{m}_{\alpha})^* D^2\phi_r(X_t) \Big).
\end{align*}

\begin{lemma}[Strict Positivity of Mass] \label{lem:positive_mass_degenerate}
Let the initial probability measure $\mu_0$ assign strictly positive mass to any neighborhood of the global minimizer $x^*$. For a fixed radius $r > 0$, let the local neighborhood be defined as 
\[
    B_r(x^*) = \{x \in H \;:\; \|x - x^*\|_H < r\}.
\]
Assume that for any finite time horizon $T > 0$, the consensus point remains uniformly bounded by some constant $B>0$ 
\[
    \sup\limits_{t \in [0,T]} \|x^* - \mathfrak{m}_{\alpha}(\mu_t)\|_H \leq B.
\]
Furthermore, assume the ambient space $H$, the trace-class covariance $Q$, and the state-dependent diffusion operator $\mathsf{D}$ satisfy the active-subspace and regularity conditions outlined in Assumption \ref{assump:active_subspace} and in Proposition \ref{prop:diffusion}. Then there exists a strictly positive constant $p > 0$ such that for all $t \in [0,T]$, the law $\mu_t$ of the CBO dynamics 
\[
    \dd X_t = -\lambda (X_t - \mathfrak{m}_{\alpha}(\mu_t))\,\dd t + \sigma\, \mathsf{D}(X_t - \mathfrak{m}_{\alpha}(\mu_t))\,\dd W_t^Q,
\] 
satisfies
\begin{equation*}
    \mu_t(B_r(x^*)) \geq \mathbb{E}[\phi_r(X_t)] \geq \mathbb{E}[\phi_r(X_0)] e^{-pt} > 0.
\end{equation*}
\end{lemma}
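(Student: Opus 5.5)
The plan is to show the pointwise differential inequality $T_1+T_2 \geq -p\,\phi_r(X_t)$ almost surely, with $p$ depending only on $r,B,\lambda,\sigma,\lambda_{\min}$ and $\mathrm{Tr}(\mathsf{P}_VQ\mathsf{P}_V)$. Taking expectations and applying Gr\"onwall then gives $\mathbb{E}[\phi_r(X_t)]\geq \mathbb{E}[\phi_r(X_0)]e^{-pt}$. The two outer inequalities are immediate:
\begin{itemize}
\item $\mu_t(B_r(x^*))\geq \mathbb{E}[\phi_r(X_t)]$, because $0\le\phi_r\le 1$ and $\phi_r$ is supported in $B_r(x^*)$;
\item $\mathbb{E}[\phi_r(X_0)]>0$, because $\phi_r>0$ on $B_r(x^*)$ and $\mu_0(B_r(x^*))>0$.
\end{itemize}

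\textbf{Reduction to $V$.} By Lemma \ref{lem:subspace_invariance}, applied to the fixed-point flow, $X_t$ and $\mathfrak{m}_\alpha(\mu_t)$ stay in $V$. Since $x^*\in V$ as well, $x-x^*\in V$ and $x-\mathfrak{m}_\alpha\in V$. The dynamics are therefore effectively finite-dimensional, and $\phi_r$ is smooth with bounded derivatives on $V$, which justifies the It\^o formula.

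\textbf{Computing $T_1$ and $T_2$.} Write $y=\|x-x^*\|^2$ and $\Sigma=\Sigma(x)$ as in Lemma \ref{lem:degenerate_covariance_bounds}. Then
\begin{equation*}
T_1=\frac{2\lambda r^2\phi_r}{(r^2-y)^2}\langle x-x^*,x-\mathfrak{m}_\alpha\rangle,
\end{equation*}
\begin{equation*}
T_2=\frac{\phi_r}{2}\Big[\frac{4r^2(2y-r^2)}{(r^2-y)^4}\langle x-x^*,\Sigma(x-x^*)\rangle-\frac{2r^2}{(r^2-y)^2}\mathrm{Tr}(\Sigma)\Big].
\end{equation*}
I will bound $\mathrm{Tr}(\Sigma)\le C_2\|x-\mathfrak{m}_\alpha\|^2$ using \eqref{eq:sigma_trace_deg}. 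When $2y>r^2$, I will bound the quadratic form from below by $c_1\|x-\mathfrak{m}_\alpha\|^2y$ using \eqref{eq:sigma_lower_deg}, valid since $x-x^*\in V$.

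\textbf{Case analysis.} Fix a threshold $c\in(0,1/2)$ and consider three regions inside the ball.
\begin{enumerate}
\item \emph{Inner region $y\le cr^2$.} The denominators $(r^2-y)$ are bounded below by $(1-c)r^2$, and $\|x-\mathfrak{m}_\alpha\|\le r+B$ by the uniform bound on the consensus point. Hence $T_1+T_2\ge -p_1\phi_r$ by crude bounds; the negative second-derivative term is bounded even where $2y<r^2$.
\item \emph{Outer region, consensus point close: $y>cr^2$ and $\|x-\mathfrak{m}_\alpha\|\le \sqrt{c}\,r/2$.} Cauchy--Schwarz gives $\langle x-x^*,x-\mathfrak{m}_\alpha\rangle\ge y-\sqrt{y}\|x-\mathfrak{m}_\alpha\|>0$, so $T_1\ge0$. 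For $T_2$, the positive quartic-denominator term dominates the trace term once $(r^2-y)$ is small, say $(2y-r^2)c_1y\ge C_2(r^2-y)^2$. Away from that, $y$ stays strictly below $r^2$ and everything is a bounded multiple of $\phi_r$. In both sub-cases the terms are $\ge -p_2\phi_r$.
\item \emph{Outer region, consensus point far: $y>cr^2$ and $\|x-\mathfrak{m}_\alpha\|>\sqrt c\,r/2$.} Here the positive diffusion term is at least
\begin{equation*}
\phi_r\,\frac{2r^2(2y-r^2)c_1 y\,(c r^2/4)}{(r^2-y)^4}.
\end{equation*}
This dominates the drift and trace terms, which are $O\big((r^2-y)^{-2}\big)$ with constants depending on $r,B$, as soon as $r^2-y\le\delta$ for a suitable $\delta(r,B,\lambda,\sigma,\lambda_{\min},C_2)$. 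On the remaining band $cr^2<y\le r^2-\delta$, all coefficients are bounded, so $T_1+T_2\ge -p_3\phi_r$.
\end{enumerate}
Taking $p=\max(p_1,p_2,p_3)$ gives the desired inequality.

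\textbf{Main obstacle.} The hardest step is the near-boundary regime $y\to r^2$. There the negative contributions, namely the drift when $\langle x-x^*,x-\mathfrak{m}_\alpha\rangle<0$ and the trace term, blow up like $(r^2-y)^{-2}$. They must be absorbed by the $(r^2-y)^{-4}$ diffusion term. This absorption requires the coercivity \eqref{eq:sigma_lower_deg}, and hence the invariance of $V$, together with a lower bound on $\|x-\mathfrak{m}_\alpha\|$. The case split above is designed precisely to supply that lower bound whenever the drift is unfavorable.
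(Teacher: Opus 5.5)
\textbf{There is a genuine gap in your Region 2, where you claim $T_1\ge 0$.} The inequality you invoke, $\langle x-x^*,x-\mathfrak{m}_\alpha\rangle\ge y-\sqrt{y}\,\|x-\mathfrak{m}_\alpha\|_H$, is false. Take $x=\mathfrak{m}_\alpha$: the left side is $0$ and the right side is $y>0$.

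The correct Cauchy--Schwarz estimate is $\langle x-x^*,x-\mathfrak{m}_\alpha\rangle=y+\langle x-x^*,x^*-\mathfrak{m}_\alpha\rangle\ge y-\sqrt{y}\,\|x^*-\mathfrak{m}_\alpha\|_H$. It involves the distance from the consensus point to $x^*$, not to the particle, so it gives nothing in your Region 2.

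In fact, smallness of $\|h\|_H=\|x-\mathfrak{m}_\alpha\|_H$ says nothing about the sign of the drift. Set $v=x-x^*$ and $\mathfrak{m}_\alpha=x+\varepsilon\,v/\|v\|_H$; this is admissible whenever $B>r$ and $\varepsilon$ is small. Then $\|h\|_H=\varepsilon$, but $T_1/\phi_r=-2\lambda r^2\sqrt{y}\,\varepsilon/(r^2-y)^2$, which is unbounded below as $y\to r^2$ for fixed $\varepsilon$. So the unfavorable drift occurs exactly in the regime where your split gives no lower bound on $\|h\|_H$. The premise of your ``main obstacle'' paragraph, that the split supplies such a bound whenever the drift is unfavorable, does not hold. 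Your treatment of $T_2$ in Region 2, your Region 3, and your Region 1 are fine, including your care about the sign of $2y-r^2$ there.

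\textbf{The missing idea is to optimize over $z=\|h\|_H$ rather than split on a fixed threshold.} For $r^2-y\le\delta_*$ small enough, $(T_1+T_2)/\phi_r\ge A(y)z^2-B(y)z$, with $A(y)\ge c_1r^6/\bigl(4(r^2-y)^4\bigr)$ and $B(y)^2\le 4\lambda^2r^6/(r^2-y)^4$. Hence $A(y)z^2-B(y)z\ge -B(y)^2/(4A(y))\ge -4\lambda^2/c_1$ for all $z\ge 0$. When $z\lesssim (r^2-y)^2$ the drift is itself bounded; otherwise the $(r^2-y)^{-4}$ diffusion term absorbs it. This is the paper's argument, and it merges your Regions 2 and 3 into a single case.

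If you prefer to keep a case split, the threshold must depend on $y$, e.g.\ $\|h\|_H\gtrless K(r^2-y)^2$ with $K$ of order $\lambda/(c_1r^3)$. A fixed threshold such as $\sqrt{c}\,r/2$ cannot work.
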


\begin{proof}

Let us introduce $v = X_t - x^*$,  $y = \|v\|_H^2$, and  $h = X_t - \mathfrak{m}_{\alpha}(\mu_t)$. Recall the smooth mollifier $\phi_r(x) = \exp\big(1 - \frac{r^2}{r^2 - y}\big)$, which is compactly supported on $B_r(x^*)$.  

Applying the infinite-dimensional It\^o formula (see Theorem \ref{thm:Ito in H}), the time evolution of the expected mollifier is $\frac{\dd}{\dd t} \mathbb{E}[\phi_r(X_t)] = \mathbb{E}[T_1 + T_2]$. Factoring out $\phi_r(X_t)$, we evaluate the sum of the drift ($T_1$) and diffusion ($T_2$) operators strictly inside the ball ($y < r^2$), that is
\begin{equation} \label{eq:full_operator_deg}
    \frac{T_1 + T_2}{\phi_r(X_t)} = \frac{2\lambda r^2}{(r^2-y)^2} \langle v, h \rangle_H + \frac{2r^2(2y - r^2)}{(r^2-y)^4} \langle v, \Sigma(X_t) v \rangle_H - \frac{r^2}{(r^2-y)^2} \mathrm{Tr}(\Sigma(X_t)).
\end{equation}

Because the initial measure and the projected dynamics strictly preserve the active subspace $V$ (as established in Lemma \ref{lem:subspace_invariance}), we are mathematically guaranteed that $X_t, x^*, \mathfrak{m}_\alpha \in V$, and thus the direction vector $v \in V$. We may therefore invoke Lemma \ref{lem:degenerate_covariance_bounds} to bound the diffusion terms using the constants $c_1$ and $C_2$ therein defined. Applying these alongside the Cauchy-Schwarz bound for the drift $\langle v, h \rangle_H \geq -\sqrt{y}\|h\|_H$, we obtain the following lower-bound
\begin{equation*}
    \frac{T_1 + T_2}{\phi_r(X_t)} \geq \|h\|_H^2 \underbrace{ \left[ \frac{2 c_1 r^2 (2y - r^2) y}{(r^2-y)^4} - \frac{C_2 r^2}{(r^2-y)^2} \right] }_{A(y)} - \|h\|_H \underbrace{ \left[ \frac{2\lambda r^2 \sqrt{y}}{(r^2-y)^2} \right] }_{B(y)}.
\end{equation*}

Let us define $z:= \|h\|$. To establish a uniform global lower bound, we need to study the function
\begin{equation*}
    H(z;y)
    := z^{2} \left[ \frac{2c_{1}\, r^{2} (2y - r^{2})y}{(r^{2} -y)^4} - \frac{C_{2}\, r^{2}}{(r^{2} - y)^2} \right]
    - z \left[\frac{2 \lambda r^{2} \sqrt{y}}{(r^{2} - y)^2}\right],
\end{equation*}
for $z\in[0, r+B]$, and $y\in[0,r^2)$. Note moreover that $c_1\leq C_2$ since $Q$ is positive definite on $V$.

Set $\delta := r^{2}-y$, so we have $\delta \in (0,r^2]$. Then
\begin{equation*}
    (2y-r^{2})y = (r^{2}-\delta)(r^{2}-2\delta)
\end{equation*}
and
\begin{equation*}
    H(z;y) = A(y)\, z^2 - B(y)\, z,
\end{equation*}
with
\begin{equation*}
    A(y) = \frac{2c_{1}r^{2}(r^{2}-\delta)(r^{2}-2\delta)}{\delta^4} - \frac{C_{2}r^{2}}{\delta^2},
    \qquad
    B(y) = \frac{2\lambda r^{2} \sqrt{y}}{\delta^2}.
\end{equation*}

\textbf{Step 1.} \textit{(A lower bound for $A(y)$)} 
We have
\begin{equation*}
    A(y) = \frac{2c_{1}r^{2}(r^{2}-\delta)(r^{2}-2\delta)}{\delta^4} - \frac{C_{2}r^{2}}{\delta^2}.
\end{equation*}
\textbullet\; When $\delta \leq r^{2}/2$, we have
\begin{equation*}
    r^{2}-\delta \geq \frac{r^{2}}{2}.
\end{equation*}
Hence, for such $\delta$,
\begin{equation*}
    A(y) \geq \frac{c_{1}\,r^4(r^{2}-2\delta)}{\delta^4} - \frac{C_{2}r^{2}}{\delta^2}.
\end{equation*}
\textbullet\; When $\delta \leq r^{2}/4$, we have 
\begin{equation*}
    r^{2}-2\delta \geq r^{2}/2.
\end{equation*}
Hence, for such $\delta$,
\begin{equation*}
    A(y) \geq \frac{c_{1}\, r^6}{2\delta^4} - \frac{C_{2}\,r^{2}}{\delta^2} = \frac{r^{2}}{\delta^4} \left( \frac{c_{1}\, r^4}{2} - C_{2}\delta^2 \right).
\end{equation*}
\textbullet\; When $\delta$ satisfies
\begin{equation*}
    \delta \leq \frac{r^{2}}{2} \sqrt{\frac{c_{1}}{C_{2}}},
\end{equation*}
we have
\begin{equation*}
    \frac{c_{1} r^4}{2} - C_{2}\delta^2 \geq \frac{c_{1} r^4}{4}.
\end{equation*}

To combine these constraints, we define
\begin{equation*}
    \delta_* := \min\left( \frac{r^{2}}{4},\; \frac{r^{2}}{2}\sqrt{\frac{c_{1}}{C_{2}}} \right),
    \qquad
    y_* := r^{2} - \delta_*.
\end{equation*}
Then for all $y \in [y_*, r^{2})$, i.e. $\delta \leq \delta_*$,
\begin{equation*}
    A(y) \geq \frac{c_{1}\, r^6}{4\, \delta^4} > 0.
\end{equation*}
Therefore, the quadratic function $t\mapsto H(z;y)=A(y)\, z^2 - B(y)\, z\;$ is convex.

\textbf{Step 2.} \textit{(A first lower bound for $H(z;y)$)} 
We focus in this step on the region $y \in [y_*, r^{2})$ where $A(y)>0$. Because $H(z;y)$ is convex in $z$, we can compute its minimum attained in 
\begin{equation*}
    z^{*} = \frac{B(y)}{2 A(y)}.
\end{equation*}
and that is
\begin{equation*}
    \min_{z\geq 0} H(z;y) = -\frac{B(y)^2}{4A(y)}.
\end{equation*}
Next, we estimate
\begin{equation*}
    B(y)^2 = \frac{4\,\lambda^2\, r^4\, y}{\delta^4} \leq \frac{4\lambda^2\, r^6}{\delta^4},
\end{equation*}
and then 
\begin{equation*}
    \frac{B(y)^2}{4A(y)} \leq \frac{4\,\lambda^2\, r^6 / \delta^4}{4 (c_{1} r^6 / (4\delta^4))}= \frac{4\lambda^2}{c_{1}}.
\end{equation*}
Therefore,
\begin{equation*}
    H(z;y) \geq -\frac{4\lambda^2}{c_{1}} \quad \forall \, t\in[0,r+B],\; y\in[y_*,r^{2}).
\end{equation*}

\textbf{Step 3.} \textit{(A second lower bound for $H(z;y)$)} 
The previous lower bound is valid for $y\in [y_*,r^{2})$. We now provide a lower bound when $y\in [0,y_{*})$. Recall from the end of Step 1, the notation $y_{*} = r^{2} - \delta_{*}$. Then we have $(r^{2}-y) \geq \delta_*>0$,  hence all denominators in $H(z;y)$ are bounded 
\begin{equation*}
    \frac{1}{(r^{2}-y)^2} \leq \frac{1}{\delta_*^2},
    \qquad
    \frac{1}{(r^{2}-y)^4} \leq \frac{1}{\delta_*^4}.
\end{equation*}
Thus there exist constants $K_1,K_2>0$, depending only on
$r^{2},c_{1},C_{2},\lambda$, such that
\begin{equation*}
    |A(y)| \leq K_1, \qquad |B(y)| \leq K_2,
\end{equation*}
and therefore
\begin{equation*}
    H(z;y) \geq -(r+B)^2 K_1 - (r+B) K_2.
\end{equation*}

\textbf{Step 4.} \textit{(Conclusion)} 
There exists a constant $p>0$, depending only on
$r,c_{1},C_{2},\lambda,B$, such that
\begin{equation*}
    H(z;y) \geq -p \quad \forall z\in[0,r+B],\; y\in[0,r^{2}).
\end{equation*}

\textbf{Step 5.} \textit{(Synthesis and Gr\"onwall Integration)} 
We define the global lower bound as $p = \max(|K_1|, |K_2|)$. For all $X_t \in B_r(x^*)$, the operator satisfies 
\begin{equation*}
    T_1 + T_2 \geq -p \, \phi_r(X_t).
\end{equation*}
Taking expectations yields the linear differential inequality $\frac{\dd}{\dd t} \mathbb{E}[\phi_r(X_t)] \geq -p \mathbb{E}[\phi_r(X_t)]$. Integrating this inequality via Gr\"onwall's lemma yields 
\begin{equation*}
    \mathbb{E}[\phi_r(X_t)] \geq \mathbb{E}[\phi_r(X_0)] e^{-pt}.
\end{equation*}
Because the initial measure $\mu_0$ assigns strictly positive mass to the neighborhood of the minimizer, $\mathbb{E}[\phi_r(X_0)] > 0$. Consequently, the probability mass inside $B_r(x^*)$ remains strictly positive for all finite times $t \in [0,T]$, completing the proof.
\end{proof}

\begin{assumption}[Regularity of the Objective Function (bis)]
\label{assum:objective bis}
The function $\mathscr{E}$ satisfies the following properties:
\begin{enumerate}
    \item It is continuous with a unique global minimizer $x^*$ satisfying $\mathscr{E}(x^*) = 0$.
    \item It has a local quadratic growth near the minimizer, that is, there exists constants $\ell_1,\ell_2 > 0$ and a radius $R > 0$ such that 
    \begin{equation*}
    \begin{aligned}
        \mathscr{E}(x) &\geq \ell_1 \|x - x^*\|_H^2 \qquad \forall\, x\in B_{R}(x^*)\\
        \text{and } \; \mathscr{E}(x) &\geq \ell_2>0 \qquad \forall\, x\in B_{R}(x^*)^c.
    \end{aligned}
    \end{equation*}
\end{enumerate}
\end{assumption}

\begin{lemma}[Quantitative Laplace Principle and Consensus Contraction] \label{lem:laplace_contraction_degenerate}
Let the objective functional $\mathscr{E}: H \to \mathbb{R}$ satisfy Assumption \ref{assum:objective bis}. 
Fix any target radius $r \in (0, R)$ and a final time $T > 0$. Let $X_t$ be a solution to the degenerate CBO dynamics \eqref{eq:mckean_vlasov_cbo_degenerate} over $t \in [0,T]$, and $\mu_t = \mathrm{Law}(X_t)$. 
For any $B > 0$, assume that 
\[
\sup_{t \in [0,T]} \|x^* - \mathfrak{m}_{\alpha}(\mu_t)\|_H \leq B.
\]
Then, the consensus point $\mathfrak{m}_{\alpha}(\mu_t)$ satisfies 
\begin{equation} \label{eq:laplace_bound_statement}
    \|\mathfrak{m}_{\alpha}(\mu_t) - x^*\|_H^2 \leq r + \rho(\alpha, t) \mathbb{E}\big[\|X_t - x^*\|_H^2\big],
\end{equation}
where the contraction factor $\rho(\alpha, t)$ depends on the strictly positive mass of the swarm, and satisfies $\lim_{\alpha \to \infty} \rho(\alpha, t) = 0$ for any fixed $t \in [0,T]$.
\end{lemma}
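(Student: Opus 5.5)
The plan is to split the weighted average defining $\mathfrak{m}_\alpha(\mu_t)$ according to whether particles lie inside or outside a small ball around $x^*$. Write $\tilde\mu_t := w\,\mu_t/Z(\mu_t)$ with $w=\exp(-\alpha\mathscr{E})$, so that $\mathfrak{m}_\alpha(\mu_t)=\int x\,\tilde\mu_t(\dd x)$. Jensen's inequality, exactly as in Lemma \ref{lem:m_alpha_moment}, gives
\begin{equation*}
\|\mathfrak{m}_\alpha(\mu_t)-x^*\|_H^2\le \int_H \|x-x^*\|_H^2\,\tilde\mu_t(\dd x).
\end{equation*}
Fix $s:=\sqrt{r/2}$; we may assume $s<R$ by shrinking $r$. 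The integral then splits into the parts over $B_s(x^*)$ and over $B_s(x^*)^c$. Since $\tilde\mu_t$ is a probability measure, the inner part is at most $s^2=r/2\le r$.

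For the outer part, bound $w\le e^{-\alpha \underline{\mathscr{E}}_s}$ on $B_s(x^*)^c$, where
\begin{equation*}
\underline{\mathscr{E}}_s:=\inf_{\|x-x^*\|\ge s}\mathscr{E}(x)\ \ge\ \min(\ell_1 s^2,\ell_2)>0
\end{equation*}
by Assumption \ref{assum:objective bis}(2). This gives
\begin{equation*}
\int_{B_s^c}\|x-x^*\|^2\,\tilde\mu_t(\dd x)\le \frac{e^{-\alpha\underline{\mathscr{E}}_s}}{Z(\mu_t)}\,\mathbb{E}\|X_t-x^*\|_H^2 .
\end{equation*}
It remains to bound $Z(\mu_t)$ from below at a rate beating $e^{-\alpha\underline{\mathscr{E}}_s}$. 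Choose a smaller radius $q\in(0,s)$ such that
\begin{equation*}
\overline{\mathscr{E}}_q:=\sup_{B_q(x^*)}\mathscr{E}\le \tfrac12\underline{\mathscr{E}}_s .
\end{equation*}
Such a $q$ exists by continuity of $\mathscr{E}$ at $x^*$ and $\mathscr{E}(x^*)=0$; the local Lipschitz bound of Assumption \ref{assum:objective}(4) even gives $\overline{\mathscr{E}}_q\lesssim q$. Then
\begin{equation*}
Z(\mu_t)\ge e^{-\alpha\overline{\mathscr{E}}_q}\mu_t(B_q(x^*))\ge e^{-\alpha\overline{\mathscr{E}}_q}\,\mathbb{E}[\phi_q(X_0)]e^{-p_q t},
\end{equation*}
where the last step invokes Lemma \ref{lem:positive_mass_degenerate} with radius $q$. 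That lemma uses the hypothesis $\sup_t\|x^*-\mathfrak{m}_\alpha(\mu_t)\|\le B$ and the assumption that $\mu_0$ charges every neighbourhood of $x^*$, which I will also assume here as in Lemma \ref{lem:positive_mass_degenerate}. Combining, I will set
\begin{equation*}
\rho(\alpha,t):=\frac{\exp\bigl(-\alpha(\underline{\mathscr{E}}_s-\overline{\mathscr{E}}_q)\bigr)\,e^{p_q t}}{\mathbb{E}[\phi_q(X_0)]}\le \frac{e^{-\alpha\underline{\mathscr{E}}_s/2}e^{p_q t}}{\mathbb{E}[\phi_q(X_0)]},
\end{equation*}
which tends to $0$ as $\alpha\to\infty$ for each fixed $t$. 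This proves \eqref{eq:laplace_bound_statement}; in fact the constant term is $r/2\le r$.

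The main obstacle is checking that the constant $p_q$ from Lemma \ref{lem:positive_mass_degenerate} is independent of $\alpha$. Otherwise the factor $e^{p_q t}$ could destroy the limit $\rho\to0$. Inspecting that proof, $p$ depends only on $q,c_1,C_2,\lambda,B$. The bound $\|h\|\le q+B$ used there comes from $\|X_t-\mathfrak{m}_\alpha\|\le\|X_t-x^*\|+\|x^*-\mathfrak{m}_\alpha\|$ on the support of $\phi_q$. So it is uniform in $\alpha$ provided $B$ is taken independent of $\alpha$, which I will read into the hypothesis. A minor point to record is the measurability and finiteness of $\mathbb{E}\|X_t-x^*\|^2$, which follows from the moment bounds of Section \ref{sec:moment_estimates}.
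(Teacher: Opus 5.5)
Your proposal is correct and follows essentially the same route as the paper's proof: Jensen's inequality for the Gibbs-weighted measure, a near/far split with the far-field weight bounded by $e^{-\alpha\min(\ell_1 r,\ell_2)}$ (your squared splitting radius $r/2$ instead of $r$ only adds slack), a lower bound on the partition function over a smaller ball where $\mathscr{E}$ is at most half that threshold, and Lemma~\ref{lem:positive_mass_degenerate} for an $\alpha$-independent mass bound. The two hypotheses you make explicit are that $\mu_0$ charges every neighbourhood of $x^*$ and that $B$ does not depend on $\alpha$; the paper's proof uses both tacitly, so stating them is an improvement rather than a deviation.
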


\begin{proof}
We proceed in several steps. 

\textbf{Step 1.} \textit{(Jensen's Inequality and Domain Splitting)} 
Recall that the consensus point $\mathfrak{m}_{\alpha}(\mu_t)$ is the expectation of $x$ under the Gibbs measure $\mu_\alpha(\dd x) = \frac{1}{Z_\alpha} e^{-\alpha \mathscr{E}(x)} \mu_t(\dd x)$. The squared norm being convex, we can apply Jensen's inequality
\begin{equation} \label{eq:jensen_split_deg}
    \|\mathfrak{m}_{\alpha}(\mu_t) - x^*\|_H^2 = \left\| \int_H (x - x^*)\, \dd \mu_\alpha(x) \right\|_H^2 \leq \int_H \|x - x^*\|_H^2 \, \dd \mu_\alpha(x).
\end{equation}
We split the integration domain into the fixed local basin 
\[
B_r(x^*) = \{x \in H \;:\; \|x - x^*\|_H^2 \leq r\}
\]
and the far-field region $B_r^c$. 

Inside the local basin $B_r(x^*)$, the integrand is strictly bounded by $r$. Because $\mu_\alpha$ is a probability measure, this local integral is also bounded by $r$.

\textbf{Step 2.} \textit{(Bounding the Far-Field Integral)} 
In the far-field region $B_r^c$, the local quadratic growth assumption guarantees that the objective value is bounded from below $\mathscr{E}(x) \geq c_1 r$ or $c_2$. Consequently, the exponential Gibbs weight is strictly bounded from above. Factoring this maximum weight out of the integral gives 
\begin{equation}
 \label{eq:far_field_bound_deg}
\begin{aligned}
    &\int_{B_r^c} \|x - x^*\|_H^2 \frac{e^{-\alpha \mathscr{E}(x)}}{Z_\alpha} \,\dd\mu_t(x) \\
    & \qquad \qquad \leq \frac{e^{-\alpha \ell_1 r}}{Z_\alpha} \int_{B_r^c\cap B_R} \|x - x^*\|_H^2 \,\dd\mu_t(x)+ \frac{e^{-\alpha \ell_2}}{Z_\alpha} \int_{B_r^c\cap B_R^c} \|x - x^*\|_H^2 \,\dd\mu_t(x) \\
    & \qquad \qquad \leq \frac{e^{-\alpha \ell_r}}{Z_\alpha} \mathbb{E}\big[\|X_t - x^*\|_H^2\big] \qquad \text{ with } \ell_r:=\min\{\ell_{1}r,\,\ell_2\}.
\end{aligned}
\end{equation}

\textbf{Step 3.} \textit{(Lower Bounding the Partition Function $Z_\alpha$)} 
To control the denominator $Z_\alpha$, we must evaluate it over a region where the energy is arbitrarily small. Because $\mathscr{E}$ is continuous and $\mathscr{E}(x^*) = 0$, there exists a smaller radius $r_\varepsilon \in (0, r)$ such that for all $x \in B_{r_\varepsilon}(x^*)$, we have $\mathscr{E}(x) \leq \frac{\ell_r}{2}$.
Restricting the integration of $Z_\alpha$ to this smaller ball 
\begin{equation*}
    Z_\alpha = \int_H e^{-\alpha \mathscr{E}(x)} \,\dd\mu_t(x) \geq \int_{B_{r_\varepsilon}(x^*)} e^{-\alpha \mathscr{E}(x)} \,\dd\mu_t(x).
\end{equation*}
Inside $B_{r_\varepsilon}(x^*)$, the maximum possible energy is $\frac{\ell_r}{2}$, meaning the minimum possible weight is $e^{-\alpha \ell_r / 2}$. Thus
\begin{equation} \label{eq:z_alpha_bound}
    Z_\alpha \geq e^{-\alpha \ell_r/ 2} \,\mu_t\big(B_{r_\varepsilon}(x^*)\big).
\end{equation}

\textbf{Step 4.} \textit{(Incorporating the Strict Positivity Lemma)} 
We must ensure that $\mu_t\big(B_{r_\varepsilon}(x^*)\big)$ is strictly bounded away from zero. We invoke Lemma \ref{lem:positive_mass_degenerate} on the ball $B_{r_\varepsilon}(x^*)$. From the evolution of the smooth mollifier $t\mapsto\phi_{r_\varepsilon}(X_t)$, there exists a finite constant $p_\varepsilon > 0$ such that the mass is strictly lower-bounded for all $t \in [0,T]$
\begin{equation*}
    \mu_t\big(B_{r_\varepsilon}(x^*)\big) \geq \mathbb{E}[\phi_{r_\varepsilon}(\xi)]\, e^{-p_\varepsilon t} \;:=\; \eta(r_\varepsilon, t) > 0.
\end{equation*}
Substituting this positive mass bound back into \eqref{eq:z_alpha_bound} yields $Z_\alpha \geq e^{-\alpha \ell_1 \frac{r}{2}}\, \eta(r_\varepsilon, t)$.

\textbf{Step 5.} \textit{(The Contraction Factor and the $\alpha \to \infty$ Limit)} 
Substituting the lower bound for $Z_\alpha$ into our far-field bound \eqref{eq:far_field_bound_deg}, and assembling the domain split from Step 1, we obtain the differential inequality
\begin{equation*}
    \|\mathfrak{m}_{\alpha}(\mu_t) - x^*\|_H^2 \leq r + \rho(\alpha, t) \mathbb{E}\big[\|X_t - x^*\|_H^2\big],
\end{equation*}
where the contraction factor is defined as 
\begin{equation*}
    \rho(\alpha, t) = \frac{e^{-\alpha \ell_r}}{e^{-\alpha \ell_r / 2} \eta(r_\varepsilon, t)} = \frac{e^{-\alpha \ell_r / 2}}{\eta(r_\varepsilon, t)}.
\end{equation*}
Because we fixed the target radius $r$ (and thereby $r_\varepsilon$) before considering $\alpha$, the mass bound $\eta(r_\varepsilon, t)$ acts as a strictly positive, $\alpha$-independent constant. Taking the limit as the Laplace parameter $\alpha \to \infty$, the exponential decay in the numerator dominates, yielding
\begin{equation*}
    \lim_{\alpha \to \infty} \rho(\alpha, t) = \lim_{\alpha \to \infty} \frac{e^{-\alpha \ell_r / 2}}{\eta(r_\varepsilon, t)} = 0.
\end{equation*}
Thus, for any sufficiently large $\alpha$, we guarantee that the consensus point is strictly localized, completing the proof. 
\end{proof}

\begin{theorem}[Global Convergence] \label{thm:global_convergence_degenerate}
Let $X_t$ be a solution to the degenerate CBO dynamics \eqref{eq:mckean_vlasov_cbo_degenerate} in the Hilbert space $H$. Assume the parameters satisfy  $\lambda > \sigma^2 L_{\mathsf{D}}^2$. Suppose the objective function $\mathscr{E}$ satisfies Assumption \ref{assum:objective} and Assumption \ref{assum:objective bis}. Suppose moreover the ambient space $H$, the trace-class covariance $Q$, and the state-dependent diffusion operator $\mathsf{D}$ satisfy the active-subspace and regularity conditions in Assumption \ref{assump:active_subspace} and in Proposition \ref{prop:diffusion}.

For any arbitrarily small target accuracy $\varepsilon \in (0, \mathbb{E}[\|X_0 - x^*\|_H^2])$, there exist a finite time horizon $0<T_*\leq T_\varepsilon:= \frac{2}{\lambda} \ln \left( \frac{E(0)}{\varepsilon} \right)$ and a minimal Laplace parameter $\alpha_0(\varepsilon) > 0$ such that for all $\alpha \geq \alpha_0(\varepsilon)$, the expected squared distance of the swarm decays exponentially fast 
\begin{equation*} 
    \mathbb{E}\big[\|X_t - x^*\|_H^2\big] \leq \mathbb{E}\big[\|X_0 - x^*\|_H^2\big] e^{-\frac{\lambda}{2} t}, \quad \text{for all } t \in [0, T_*].
\end{equation*}
Consequently, both the swarm variance and the consensus point error achieve the target accuracy $\varepsilon$ at time $T_*$, that is
\begin{equation*}
    \mathbb{E}\big[\|X_{T_*} - x^*\|_H^2\big] \leq \varepsilon, \quad \text{and} \quad \|\mathfrak{m}_{\alpha}(\mu_{T_*}) - x^*\|_H^2 \leq \varepsilon.
\end{equation*}
\end{theorem}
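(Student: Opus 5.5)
The plan is the standard CBO energy argument applied to $E(t):=\mathbb{E}[\|X_t-x^*\|_H^2]$, with the Laplace estimate of Lemma \ref{lem:laplace_contraction_degenerate} closing the inequality. Let $h_t=X_t-\mathfrak{m}_\alpha(\mu_t)$.

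\textbf{Step 1: differential inequality for $E$.} I would apply the infinite-dimensional It\^o formula (Theorem \ref{thm:Ito in H}) to $\|X_t-x^*\|_H^2$ and take expectations, so that the martingale term vanishes. This gives
\[
E'(t)=-2\lambda\,\mathbb{E}\langle X_t-x^*,h_t\rangle_H+\sigma^2\,\mathbb{E}\|\mathsf{D}(h_t)\|_{\mathscr{L}_Q}^2 .
\]
Next I would write $h_t=(X_t-x^*)-(\mathfrak{m}_\alpha-x^*)$ and use Young's inequality with weight $1/2$. I would also use $\|\mathsf{D}(h)\|^2_{\mathscr{L}_Q}\le L_{\mathsf{D}}^2\|h\|^2\le 2L_{\mathsf{D}}^2(\|X_t-x^*\|^2+\|\mathfrak{m}_\alpha-x^*\|^2)$ from Proposition \ref{prop:diffusion}. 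Together these should yield
\[
E'(t)\le -(\lambda-2\sigma^2L_{\mathsf{D}}^2)E(t)+(\lambda+2\sigma^2L_{\mathsf{D}}^2)\|\mathfrak{m}_\alpha(\mu_t)-x^*\|_H^2 .
\]
The constants need tuning so that only $\lambda>\sigma^2L_{\mathsf{D}}^2$ is used. Splitting the cross term more carefully (weight $\theta$ small in Young) gives a leading coefficient of the form $-(2\lambda-\lambda\theta-\sigma^2L_{\mathsf{D}}^2(1+\theta))$, which exceeds $\lambda$ in absolute value for small $\theta$. This leaves a margin of at least $\lambda/2$ after the consensus term is absorbed.

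\textbf{Step 2: insert the Laplace bound.} I would fix $\varepsilon$ and set $T_\varepsilon=\frac{2}{\lambda}\ln(E(0)/\varepsilon)$. The a priori bounds from Section \ref{sec:moment_estimates} on $[0,T_\varepsilon]$, together with Lemma \ref{lem:m_alpha_moment}, give a uniform $B$ with $\sup_{[0,T_\varepsilon]}\|x^*-\mathfrak{m}_\alpha(\mu_t)\|\le B$. This $B$ must be independent of $\alpha$, which is delicate because $Z_{\min}$ depends on $\alpha$. To get it, I would instead bound $B$ through Lemma \ref{lem:laplace_contraction_degenerate} itself, via a bootstrap on the set of times where $E(t)\le E(0)$.

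I would then apply Lemma \ref{lem:laplace_contraction_degenerate} with radius $r=c\varepsilon$, with $c$ small depending on $\lambda,\sigma,L_{\mathsf{D}}$. This gives $\|\mathfrak{m}_\alpha-x^*\|^2\le c\varepsilon+\rho(\alpha,t)E(t)$. Since the mass bound $\eta(r_\varepsilon,t)\ge\eta(r_\varepsilon,T_\varepsilon)>0$ on $[0,T_\varepsilon]$, we get $\sup_{t\le T_\varepsilon}\rho(\alpha,t)\to0$. Choosing $\alpha_0(\varepsilon)$ so that this supremum is small, Step 1 becomes
\[
E'\le -\lambda E + C c\,\varepsilon .
\]

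\textbf{Step 3: conclusion.} I would define $T_*:=\sup\{t\le T_\varepsilon: E(s)>\varepsilon \text{ for } s<t\}$. On $[0,T_*]$ we have $Cc\varepsilon\le \frac{\lambda}{2}E$ once $c$ is small, so $E'\le-\frac{\lambda}{2}E$ and $E(t)\le E(0)e^{-\lambda t/2}$. At $t=T_\varepsilon$ this gives $E\le\varepsilon$, so $T_*\le T_\varepsilon$ and $E(T_*)\le\varepsilon$ by continuity. Finally, $\|\mathfrak{m}_\alpha(\mu_{T_*})-x^*\|^2\le c\varepsilon+\rho\,\varepsilon\le\varepsilon$.

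\textbf{Main obstacle.} I expect the hardest step to be the uniform-in-$\alpha$ control: both the bound $B$ and the mass rate $p_\varepsilon$ of Lemma \ref{lem:positive_mass_degenerate} depend on $B$. I would resolve this circularity by a continuity argument: assume $E\le E(0)$ on $[0,t]$, deduce $B^2\le c\varepsilon+\rho E(0)$ bounded independently of $\alpha$, and propagate.
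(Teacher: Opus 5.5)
Your proposal follows the paper's proof essentially step for step. The shared steps are the energy inequality for $E(t)$, inserting the Laplace bound of Lemma~\ref{lem:laplace_contraction_degenerate} with a radius proportional to $\varepsilon$, choosing $\alpha_0$ through the mass bound $\eta(r,t)\ge\eta(r,T_\varepsilon)$, and stopping when $E$ first reaches $\varepsilon$ (the paper's $T_1^\alpha$).

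In Step 1 the paper avoids your $\theta$-tuning. It uses the polarization identity $2\langle a,b\rangle_H=\|a\|_H^2+\|b\|_H^2-\|a-b\|_H^2$ with $a=X_t-x^*$ and $b=X_t-\mathfrak{m}_\alpha(\mu_t)$, which gives directly
\[
E'(t)\le -\lambda E(t)+\lambda\|\mathfrak{m}_\alpha(\mu_t)-x^*\|_H^2-(\lambda-\sigma^2L_{\mathsf{D}}^2)\,\mathbb{E}\|X_t-\mathfrak{m}_\alpha(\mu_t)\|_H^2 .
\]
The consensus error then carries the coefficient $\lambda$ instead of your $C_\theta\sim 1/\theta$, so $r=\varepsilon/4$ and $\rho\le 1/4$ suffice. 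Your Young-based version is also correct, just with worse constants.

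The one point to fix is the bootstrap you propose for the main obstacle. Assuming only $E\le E(0)$ on $[0,t]$ does not let you invoke Lemma~\ref{lem:laplace_contraction_degenerate}. Through Lemma~\ref{lem:positive_mass_degenerate}, that lemma needs an a priori bound $\sup_{s\le t}\|x^*-\mathfrak{m}_\alpha(\mu_s)\|_H\le B$. Moreover, $\rho$ itself depends on $B$ through the rate $p$, so ``deducing'' $B$ from $\rho$ is circular.

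The threshold must be fixed before $\alpha$ and made part of the bootstrap hypothesis. This is exactly the paper's second stopping time $T_2^\alpha$, with $B^2=E(0)+1$:
\begin{itemize}
\item Before $T_2^\alpha$, the rate $p=p(B,r)$ and hence $C_0=\eta(r,T_\varepsilon)$ do not depend on $\alpha$, so $\alpha_0$ can be chosen.
\item The Laplace bound then gives $\|\mathfrak{m}_\alpha(\mu_t)-x^*\|_H^2\le \varepsilon/4+E(0)/4<E(0)+1$.
\item By continuity of $t\mapsto\mathfrak{m}_\alpha(\mu_t)$, the threshold is therefore never reached.
\end{itemize}
The threshold also holds at $t=0$, by the same Laplace estimate applied to $\mu_0$ once $\alpha$ is large. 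With this change your argument coincides with the paper's.
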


\begin{proof}
We proceed in several steps. 
Let us define 
\[
    E(t):= \mathbb{E}\big[\|X_t - x^*\|_H^2\big].
\]

\textbf{Step 1.} \textit{(The Differential Inequality)} 
Applying the infinite-dimensional It\^o formula (see Theorem \ref{thm:Ito in H} and Example \ref{ex:Ito in H}) to $x\mapsto \|x - x^*\|_H^2$ and using the polarization identity\footnote{We use $\;2\langle a,b\rangle = \|a\|_{H}^{2} + \|b\|_{H}^{2} - \|a-b\|_{H}^{2}$, where $a = X_t - x^{*}$, and $b = X_t - \mathfrak{m}_{\alpha}(\mu_t)$.}, and Proposition \ref{prop:diffusion}(2), we obtain 
\begin{equation*}
    \frac{\dd}{\dd t} E(t) \leq -\lambda E(t) + \lambda \|\mathfrak{m}_{\alpha}(\mu_t) - x^*\|_H^2 - \left( \lambda - \sigma^2 L_{\mathsf{D}}^2 \right) \mathbb{E}\big[\|X_t - \mathfrak{m}_{\alpha}(\mu_t)\|_H^2\big].
\end{equation*}
The parameter condition $\lambda \geq \sigma^2 L_{\mathsf{D}}^2$ ensures the final term is non-positive. Discarding it yields 
\begin{equation} \label{eq:diff_ineq_deg}
    \frac{\dd}{\dd t} E(t) \leq -\lambda E(t) + \lambda \|\mathfrak{m}_{\alpha}(\mu_t) - x^*\|_H^2.
\end{equation}

\textbf{Step 2.} \textit{(Defining the Stopping Times and Target Horizon)} 
We fix the target local radius as $r = \frac{\varepsilon}{4}$. The finite time horizon $T_\varepsilon$ required for the exponential decay to hit the target accuracy is defined as
\begin{equation*}
    T_\varepsilon := \frac{2}{\lambda} \ln \left( \frac{E(0)}{\varepsilon} \right).
\end{equation*}
To handle the dependency between the mass lower bound and the consensus convergence, we introduce two stopping times. Let $T^\alpha_1$ be the time the variance hits the target accuracy, and $T^\alpha_2$ be the time the consensus point escapes a loose bounding threshold
\begin{align*}
    T^\alpha_1 &:= \inf\big\{t > 0 \mid E(t) \leq \varepsilon \big\}, \\
    T^\alpha_2 &:= \inf\big\{t > 0 \mid \|\mathfrak{m}_{\alpha}(\mu_t) - x^*\|_H^2 \geq E(0) + 1 \big\}.
\end{align*}
We define the effective time horizon as $T_\alpha = \min\{T^\alpha_1, T^\alpha_2, T_\varepsilon\}$.

\textbf{Step 3.} \textit{(Uniform Bounding on the Localized Interval)} 
For all $t \in [0, T_\alpha)$, the consensus point is strictly bounded by definition
\[
    \|\mathfrak{m}_{\alpha}(\mu_t) - x^*\|_H \leq \sqrt{E(0) + 1} := B.
\]
Because the consensus point is bounded, we may apply Lemma \ref{lem:positive_mass_degenerate} to obtain a strictly positive mass bound on the  set $B_r(x^*)$ with an escape rate $p$ that depends only on $B$ and $r$, independent of $\alpha$, that is
\begin{equation*}
    \mu_t(B_r(x^*)) \,\geq\, \eta(r, t) \,\geq\, \eta(r, T_\varepsilon)\, =: C_0 > 0.
\end{equation*}
By the Laplace Contraction (Lemma \ref{lem:laplace_contraction_degenerate}), this strictly positive mass enforces a bound on the consensus point for all $t \in [0, T_\alpha)$, and one gets
\begin{equation} \label{eq:laplace_sub}
    \|\mathfrak{m}_{\alpha}(\mu_t) - x^*\|_H^2 \leq r + \frac{e^{-\alpha \ell_r / 2}}{C_0} E(t) =: r + \rho(\alpha) E(t).
\end{equation}
We choose $\alpha_0$ sufficiently large such that for all $\alpha \geq \alpha_0$, we guarantee that $\rho(\alpha) \leq \frac{1}{4}$.

\textbf{Step 4.} \textit{(The Exponential Decay Phase)} 
Substituting the contraction bound \eqref{eq:laplace_sub} into the differential inequality \eqref{eq:diff_ineq_deg} yields
\begin{equation*}
    \frac{\dd}{\dd t} E(t) \leq -\lambda E(t) + \lambda \big( r + \rho(\alpha) E(t) \big).
\end{equation*}
For all $t \in [0, T_\alpha)$, because $t \leq T^\alpha_1$, the variance has not yet reached the target, so we have $E(t) \geq \varepsilon$. Consequently, our chosen radius $r = \frac{\varepsilon}{4}$  satisfies $r \leq \frac{E(t)}{4}$. Using this relation alongside $\rho(\alpha) \leq \frac{1}{4}$, the previous inequality becomes
\begin{equation*}
    \frac{\dd}{\dd t} E(t) \leq -\lambda E(t) + \lambda \left( \frac{E(t)}{4} + \frac{E(t)}{4} \right) = -\frac{\lambda}{2} \, E(t).
\end{equation*}
Applying Gr\"onwall's inequality yields an exponential decay
\begin{equation} \label{eq:gronwall_strict}
    E(t) \leq E(0) \, e^{-\frac{\lambda}{2} t} \qquad \text{for all } t \in [0, T_\alpha).
\end{equation}

\textbf{Step 5.} \textit{(Ruling out $T^\alpha_2$ via Continuous Induction)} 
We must now verify that the consensus point never triggers the threshold $T^\alpha_2$. Because $E(t)$ is strictly decreasing from \eqref{eq:gronwall_strict}, we have $E(t) \leq E(0)$. Substituting this into our consensus bound \eqref{eq:laplace_sub}, we get
\begin{equation*}
    \|\mathfrak{m}_{\alpha}(\mu_t) - x^*\|_H^2 \leq \frac{\varepsilon}{4} + \frac{1}{4} E(t) \leq \frac{\varepsilon}{4} + \frac{E(0)}{4}.
\end{equation*}
Because we assumed $\varepsilon < E(0)$, the right-hand side is strictly bounded by $\frac{E(0)}{2}$, which is less than the threshold $E(0) + 1$. By the continuity of the trajectories, this implies that $T^\alpha_2$ is never reached. Consequently, $T_\alpha = \min\{T^\alpha_1, T_\varepsilon\}$.

\textbf{Step 6.} \textit{(Target Accuracy at $T_\varepsilon$)} 
When $T^\alpha_1<T_\varepsilon$, one has $T_\alpha=T^\alpha_1$, and therefore $E(T_\alpha)=E(T^\alpha_1)=\varepsilon$ by the definition of $T^\alpha_1$.
For $T^\alpha_1\geq T_\varepsilon$, we have $T_\alpha=T_\varepsilon$.
Evaluating the exponential bound \eqref{eq:gronwall_strict} at the target horizon $T_\alpha$ yields
\begin{equation*}
      E(T_\alpha)=E(T_\varepsilon) \leq E(0) \exp\left( -\frac{\lambda}{2} \left[ \frac{2}{\lambda} \ln \left( \frac{E(0)}{\varepsilon} \right) \right] \right) = \varepsilon.
\end{equation*}
Thus, the variance reaches the $\varepsilon$ threshold exactly at or before time $T_\varepsilon$. 
Finally, substituting the achieved swarm accuracy $E(T_\alpha) \leq \varepsilon$ into the Laplace contraction inequality \eqref{eq:laplace_sub} establishes the accuracy of the algorithm's output
\begin{equation*}
    \|\mathfrak{m}_{\alpha}(\mu_{T_\alpha}) - x^*\|_H^2 \leq r + \rho(\alpha) E(T_\alpha) \leq \frac{\varepsilon}{4} + \frac{\varepsilon}{4} = \frac{\varepsilon}{2} \leq \varepsilon.
\end{equation*}
This completes the proof when we choose $T_*=T_\alpha$.
\end{proof}

\begin{remark}
In the previous proof, we can set $\varepsilon$, which sets $T_\varepsilon$. Subsequently, this defines $C_0$, and finally also defines $\alpha_0$.
\end{remark}

\begin{remark}[Galerkin Truncation and Subspace Minimization]
\label{rem:subspace_minimizer}
We emphasize a fundamental theoretical nuance regarding the projected diffusion operator $\mathsf{D}(x) = \|x\|_H \mathsf{P}_V$. Because the exploratory stochastic noise is strictly confined to the finite-dimensional active subspace $V$, the dynamics in the infinite-dimensional orthogonal complement $V^\perp$ are entirely deterministic. Consequently, the swarm lacks the stochastic energy required to perform global exploration or escape local minima residing in $V^\perp$. 

Therefore, the convergence established in our main results strictly guarantees that the consensus point exponentially tracks $x_V^*$, the global minimizer of the energy functional restricted to the subspace $V$. The total algorithmic error relative to the true global minimizer $x^* \in H$ can be decomposed via the triangle inequality as
\begin{equation*}
   \mathbb{E}\big[\|X_t - x^*\|_H^2\big]\leq \underbrace{\mathbb{E}\big[\|X_t - x_V^*\|_H^2\big]}_{\text{Optimization Error}} + \underbrace{\|x_V^* - x^*\|_H}_{\text{Approximation Error}}.
\end{equation*}
The exponential bounds derived in our theorems exclusively govern the optimization error. The residual approximation error is purely deterministic. Notably, if the global minimizer $x^*$ belongs to a relatively compact subset $\mathcal{K} \subset H$ (e.g., a specific Sobolev or Besov space), the fundamental lower limit of this approximation error is strictly governed by the Kolmogorov $M$-width \cite{pinkus2012n} of $\mathcal{K}$, where $M = \dim(V)$. Consequently, while the projected dynamics guarantee optimal convergence within the active subspace, the absolute terminal accuracy is inherently bounded by the spectral decay properties of the target space.
\end{remark}

\section{The Interacting Particle Model}
\label{sec:particle}

Let us first define the finite-particle model whose mean-field limit is \eqref{eq:mckean_vlasov_cbo}. 
We are given $N$ particles $\{X^{i}_{\cdot}\}_{i\in [N]}$, $[N]=\{1,2,...,N\}$, whose dynamics is governed by the $H$-valued SDE
\begin{equation}
\label{eq:finite_cbo}
\begin{aligned}
    & \dd X^{i}_t = -\lambda \left(X^{i}_t - \mathfrak{m}^{\alpha,N}_{t}\right)\,\dd t + \sigma\, \mathsf{D}\left(X^{i}_t - \mathfrak{m}^{\alpha,N}_{t}\right)\,\dd W_t^{Q,i}, \quad X^{i}_0 = \mathbf{x}^{i}_{\circ} 
\end{aligned}
\end{equation}
where $\mathfrak{m}^{\alpha,N}_{\cdot}$ represents the \textit{interaction} between all the particles $\{X^{i}_{\cdot}\}_{i\in [N]}$, and is given as a weighted convex combination of their position
\begin{equation}\label{consensus m}
    \mathfrak{m}^{\alpha,N}_{t} := \frac{1}{\sum\limits_{k=1}^{N}\omega_{\alpha}(X^{k}_{t})} \, \sum\limits_{j=1}^{N}\,X^{j}_{t}\; \omega_{\alpha}(X^{j}_{t}).
\end{equation}
Here, the weight coefficients $\omega_{\alpha}(\cdot)$ are as defined in Proposition \ref{prop:global_lipschitz}. They are functions evaluated in the position of the particle, and they depend on a parameter $\alpha>0$ such that $\;\omega_{\alpha}:H \to (0,\infty),\quad x\mapsto \omega_{\alpha}(x) = \exp(-\alpha\, \mathscr{E}(x)).$ The rigorous derivation of the mean-field limit estimate, bridging the particle system \eqref{eq:finite_cbo} to its mean-field counterpart \eqref{eq:mckean_vlasov_cbo}, is omitted here. We may adapt the methodologies presented in \cite{huang2022mean,huang2025uniform,bayraktar2025uniform,gerber2025uniform,gerber2023mean,choi2025modified}, which we leave for future work.

We would like to write the dynamics for the vector in $H^N$ whose entries are $X^i$, $i\in[N]$. We introduce the notation  
\begin{equation}\label{X H N}
    \mathds{X}^{N}_{t} := 
    \begin{pmatrix}
    X^{1}_{t}\\
    \vdots\\
    X^{i}_{t}\\
    \vdots\\
    X^{N}_{t}
    \end{pmatrix}
    \in \, H^{N}.
\end{equation}
Then we define the map which gives $\mathfrak{m}^{\alpha,N}$ such as
\begin{equation}\label{G}
\begin{aligned}
    \mathtt{G}:H^{N} & \longrightarrow H, \quad
    (X^{i})_{i\in [N]}=\mathds{X}  \longmapsto \mathtt{G}(\mathds{X}) := \frac{1}{\sum\limits_{k=1}^{N}\omega_{\alpha}(X^{k})} \, \sum\limits_{j=1}^{N}\,X^{j}\; \omega_{\alpha}(X^{j}).
\end{aligned}
\end{equation}
We can now write
\begin{equation}
\label{eq:mathds F}
\begin{aligned}
    X^{i} - \mathtt{G}(\mathds{X}) 
    & =  X^{i} - \frac{1}{\sum\limits_{k=1}^{N}\omega_{\alpha}(X^{k})} \, \sum\limits_{j=1}^{N}\,X^{j}\; \omega_{\alpha}(X^{j})\\ 
    & = \sum\limits_{j=1}^{N}\,\frac{\big(X^{i}-X^{j}\big)\; \omega_{\alpha}(X^{j})}{\sum\limits_{k=1}^{N}\omega_{\alpha}(X^{k})} \quad =: \; \mathbf{F}^{i}(\mathds{X}) \in \mathbb{H}.
\end{aligned}
\end{equation}
Setting $\mathds{F}(\mathds{X}):=\big( \mathbf{F}^{i}(\mathds{X}) \big)_{i\in [N]} \in H^{N}$, the vector made of $N$ dynamics each given by SDE \eqref{eq:finite_cbo} takes the form 
\begin{equation}\label{CBO in H N F}
    \dd \mathds{X}_{t} = -\lambda \, \mathds{F}(\mathds{X}_{t}) \, \dd t + \sigma\, \mathbb{D}(\mathds{F}(\mathds{X}_{t})) \, \dd \mathbb{W}^{Q}_{t},\quad t>0, \quad \mathds{X}_{0}=\mathbf{x}_{\circ}=(\mathbf{x}^{i}_{\circ})_{i\in[N]},
\end{equation}
where $\mathbb{W}^{Q}_{t} = (W^{Q,i}_{t})_{i\in [N]}$ and each $W^{Q,i}_{\cdot}$ is as defined in \S\ref{subsec:q_wiener}. Here $\mathbb{D}$ is simply
\begin{equation*}
\begin{aligned}
     \mathbb{D}: \; H^{N} & \longrightarrow \mathscr{L}(H^{N},H^{N})\\
    (Y^{i})_{i\in [N]}=\mathds{Y} \;& \longmapsto \mathbb{D}(\mathds{Y}) = (\mathsf{D}(Y^{i}))_{i\in[N]}
\end{aligned}
\end{equation*}
where $\mathsf{D}$ is as defined in \eqref{def:diffusion}. When $\mathbb{D}$ is evaluated in $\mathds{Y} = \mathds{F}(\mathds{X}_t)$, we recover
\[
\mathbb{D}(\mathds{F}(\mathds{X}_t)) 
=
\begin{pmatrix}
    \mathsf{D}(\mathbf{F}^{1}(\mathds{X}_t))\\
    \vdots\\
    \mathsf{D}(\mathbf{F}^{i}(\mathds{X}_t))\\
    \vdots\\
    \mathsf{D}(\mathbf{F}^{N}(\mathds{X}_t))
\end{pmatrix}
=
\begin{pmatrix}
    \mathsf{D}(X^1_t- \mathtt{G}(\mathds{X}_t))\\
    \vdots\\
    \mathsf{D}(X^i_t- \mathtt{G}(\mathds{X}_t))\\
    \vdots\\
    \mathsf{D}(X^N_t- \mathtt{G}(\mathds{X}_t))
\end{pmatrix}
=
\begin{pmatrix}
    \mathsf{D}(X^1_t- \mathfrak{m}^{\alpha,N}_{t})\\
    \vdots\\
    \mathsf{D}(X^i_t- \mathfrak{m}^{\alpha,N}_{t})\\
    \vdots\\
    \mathsf{D}(X^N_t- \mathfrak{m}^{\alpha,N}_{t})
\end{pmatrix}
\]
where we recall $\mathtt{G}$ is defined in \eqref{G}.

\begin{lemma}\label{lem: F}
Let Assumption \ref{assum:objective} and Assumption \ref{assump:active_subspace} be satisfied, and suppose we are in the setting of Proposition \ref{prop:diffusion}. Then the drift and diffusion coefficients of \eqref{CBO in H N F} are locally Lipschitz continuous. In particular, the map $\mathds{F}:H^N \to H^N$ is locally Lipschitz continuous.
\end{lemma}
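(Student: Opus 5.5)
The plan is to reduce everything to the local Lipschitz continuity of the map $\mathtt{G}:H^N\to H$ defined in \eqref{G}. We equip $H^N$ with the norm $\|\mathds{X}\|_{H^N}^2=\sum_i\|X^i\|_H^2$. Since $\mathbf{F}^i(\mathds{X}) = X^i-\mathtt{G}(\mathds{X})$ by \eqref{eq:mathds F}, local Lipschitz continuity of $\mathtt{G}$ gives it for each $\mathbf{F}^i$, hence for $\mathds{F}$. For the diffusion, Proposition \ref{prop:diffusion}(1) gives $\|\mathsf{D}(\mathbf{F}^i(\mathds{X}))-\mathsf{D}(\mathbf{F}^i(\mathds{Y}))\|_{\mathscr{L}_Q}\le L_{\mathsf{D}}\|\mathbf{F}^i(\mathds{X})-\mathbf{F}^i(\mathds{Y})\|_H$. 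Summing the squares over $i$ then shows that $\mathds{X}\mapsto\mathbb{D}(\mathds{F}(\mathds{X}))$ is locally Lipschitz in the block-diagonal $\mathscr{L}_Q$ norm. Here we use that $\mathbb{W}^Q$ has the block-diagonal covariance $\mathrm{diag}(Q,\dots,Q)$, so the Hilbert--Schmidt norm splits as a sum over $i$.

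For $\mathtt{G}$, we write $\mathtt{G}=\mathcal{N}/\mathcal{Z}$ with $\mathcal{N}(\mathds{X})=\sum_j X^j\omega_\alpha(X^j)$ and $\mathcal{Z}(\mathds{X})=\sum_k\omega_\alpha(X^k)$, mimicking the proof of Lemma \ref{lem:local_lipschitz_m}. By Proposition \ref{prop:global_lipschitz}, $x\mapsto x\omega_\alpha(x)$ and $x\mapsto \omega_\alpha(x)$ are globally Lipschitz with constants $L_f$ and $L_w$. This gives $\|\mathcal{N}(\mathds{X})-\mathcal{N}(\mathds{Y})\|_H\le L_f\sqrt{N}\|\mathds{X}-\mathds{Y}\|_{H^N}$ and $|\mathcal{Z}(\mathds{X})-\mathcal{Z}(\mathds{Y})|\le L_w\sqrt N\|\mathds{X}-\mathds{Y}\|_{H^N}$. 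We then use the same splitting as in \eqref{eq:m_diff_bound_R}:
\[
\|\mathtt{G}(\mathds{X})-\mathtt{G}(\mathds{Y})\|_H\le \frac{\|\mathcal{N}(\mathds{X})-\mathcal{N}(\mathds{Y})\|_H}{\mathcal{Z}(\mathds{X})}+\frac{\|\mathcal{N}(\mathds{Y})\|_H}{\mathcal{Z}(\mathds{X})\mathcal{Z}(\mathds{Y})}|\mathcal{Z}(\mathds{X})-\mathcal{Z}(\mathds{Y})|.
\]

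The main point, and the reason the conclusion is only local, is bounding the denominators from below. On a ball $\{\|\mathds{X}\|_{H^N}\le R\}$ every component satisfies $\|X^k\|_H\le R$. The polynomial growth in Assumption \ref{assum:objective}(2) gives $\mathscr{E}(X^k)\le C_{\mathscr{E}}(1+R^p)$, so $\mathcal{Z}(\mathds{X})\ge N e^{-\alpha C_{\mathscr{E}}(1+R^p)}=:Z_R>0$. Also $\|\mathcal{N}(\mathds{Y})\|_H\le \sum_j\|Y^j\|_H\le \sqrt N R$, using $\omega_\alpha\le 1$ from nonnegativity of $\mathscr{E}$. Combining these yields a Lipschitz constant of the form $\sqrt N\big(L_f/Z_R+\sqrt N R L_w/Z_R^2\big)$ on the ball.

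This constant deteriorates as $R\to\infty$ because $Z_R\to0$, so only local Lipschitz continuity is obtained. This is exactly the claim, and no further obstacle is expected beyond keeping track of the product-norm constants.
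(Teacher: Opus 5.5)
Your proposal is correct and follows essentially the paper's argument. Both use a quotient-rule splitting in which the denominator $\sum_k\omega_\alpha(X^k)$ is bounded below on balls via the polynomial growth of $\mathscr{E}$, together with the Lipschitz bounds of Proposition \ref{prop:global_lipschitz}, and Proposition \ref{prop:diffusion}(1) for the diffusion. The only difference is which quotient is estimated: the paper works with $\mathbf{F}^i = A^i/B$, where $A^i(\mathds{X})=\sum_j (X^i-X^j)\,\omega_\alpha(X^j)$, using only the Lipschitz property of $\omega_\alpha$ and its local bounds $M_\omega(R)$, $m_\omega(R)$. You instead go through $\mathtt{G}=\mathcal{N}/\mathcal{Z}$ as in Lemma \ref{lem:local_lipschitz_m}, additionally using the Lipschitz property of $x\mapsto x\,\omega_\alpha(x)$, and your explicit tracking of the $\sqrt{N}$ factors for the product norm on $H^N$ is cleaner than the paper's.
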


\begin{proof}
Thanks to item (1) of Proposition \ref{prop:diffusion}, $\mathsf{D}$ is Lipschitz, hence also $\mathbb{D}$. It suffices then to show that $\mathds{F}$ is Lipschitz. 
Let us recall 
\begin{equation*}
\begin{aligned}
    \mathds{F}: H^N & \longrightarrow H^N,\;
        \mathds{X}  \longmapsto \big( \mathbf{F}^{i}(\mathds{X}) \big)_{i\in [N]} 
\end{aligned}
\end{equation*}
and for each $i\in [N]$, we have \eqref{eq:mathds F} that is
\begin{equation*}
\begin{aligned}
    \mathbf{F}^{i}: H^N & \longrightarrow H,\quad
        \mathds{X}  \longmapsto \sum\limits_{j=1}^{N}\,\frac{\big(X^{i}-X^{j}\big)\; \omega_{\alpha}(X^{j})}{\sum\limits_{k=1}^{N}\omega_{\alpha}(X^{k})}
\end{aligned}
\end{equation*}
It is therefore sufficient to verify that $\mathbf{F}^i$ is globally Lipschitz continuous.

Let $i\in[N]$ be arbitrarily fixed. 
Let $\mathds{X}, \mathds{Y} \in H^N$ satisfy $\;\|\mathds{X}\|_{H^N}, \|\mathds{Y}\|_{H^N} \le R\;$
for some fixed $R>0$. 
We shall estimate the quantity 
\begin{equation*}
    \mathbf{F}^{i}(\mathds{X}) - \mathbf{F}^{i}(\mathds{Y}) \, \in \mathbb{H},\quad \forall\, i\in [N].
\end{equation*}
We define
\begin{equation*}
    A^i(\mathds{X}) := \sum_{j=1}^N (X^i - X^j)\,\omega_\alpha(X^j), \qquad B(\mathds{X}) := \sum_{k=1}^N \omega_\alpha(X^k), \qquad \text{ so that } \; \mathbf{F}^i(\mathds{X}) = \frac{A^i(\mathds{X})}{B(\mathds{X})}.
\end{equation*}
Let us also introduce
\begin{equation*}
    M_\omega(R) := \sup_{\|x\|_H \le R} \omega_\alpha(x), \qquad m_\omega(R) := \inf_{\|x\|_H \le R} \omega_\alpha(x) > 0.
\end{equation*}
and define
\begin{equation*}
    M_B(R) := N M_\omega(R), \qquad m_B(R) := N m_\omega(R).
\end{equation*}

\textbf{Step 1.} \textit{(The quantities depending on $B$)}  
For all $\mathds{X}$ such that $\|\mathds{X}\|_{H^N} \le R$, it holds
\begin{equation}\label{eq: B bound}
m_B(R) \le B(\mathds{X}) \le M_B(R).
\end{equation}
On the other hand, recalling Proposition \ref{prop:global_lipschitz}, we have $\omega_\alpha$ is globally Lipschitz with a constant denoted $L_\omega$. Therefore
\begin{equation*}
    |B(\mathds{X}) - B(\mathds{Y})| \le \sum_{k=1}^N |\omega_\alpha(X^k) - \omega_\alpha(Y^k)| \le L_\omega \sum_{k=1}^N \|X^k - Y^k\|_H \le L_\omega \|\mathds{X} - \mathds{Y}\|_{H^N}.
\end{equation*}
Moreover, 
\begin{equation}\label{eq: B lip}
    \left|\frac{1}{B(\mathds{X})} - \frac{1}{B(\mathds{Y})}\right| = \frac{|B(\mathds{X}) - B(\mathds{Y})|}{B(\mathds{X}) B(\mathds{Y})} \le \frac{L_\omega}{m_B(R)^2} \|\mathds{X} - \mathds{Y}\|_{H^N}.
\end{equation}

\textbf{Step 2.} \textit{(The quantities depending on $A$)} 
We estimate 
\begin{equation*}
    A^i(\mathds{X}) - A^i(\mathds{Y}) = \sum_{j=1}^N \Big[(X^i - X^j)\omega_\alpha(X^j) - (Y^i - Y^j)\omega_\alpha(Y^j)\Big].
\end{equation*}
We split each term as
\begin{equation*}
\begin{aligned}
    &(X^i - X^j)\omega_\alpha(X^j) - (Y^i - Y^j)\omega_\alpha(Y^j) \\
    & = (X^i - X^j - Y^i + Y^j)\omega_\alpha(X^j) + (Y^i - Y^j)(\omega_\alpha(X^j) - \omega_\alpha(Y^j)).
\end{aligned}
\end{equation*}
\textbullet\; Since $\omega_\alpha$ is bounded on bounded sets, we have
\begin{equation*}
    \|(X^i - X^j - Y^i + Y^j)\omega_\alpha(X^j)\|_H \le M_\omega(R)\big(\|X^i - Y^i\|_H + \|X^j - Y^j\|_H\big).
\end{equation*}
\textbullet\; Since $\|\mathds{Y}\|_{H^N} \le R$, we have
\begin{equation*}
    \|Y^i - Y^j\|_H \le 2R.
\end{equation*}
\textbullet\; Using the Lipschitz property of $\omega_\alpha$ with constant $L_\omega$,
\begin{equation*}
    |\omega_\alpha(X^j) - \omega_\alpha(Y^j)| \le L_\omega \|X^j - Y^j\|_H,
\end{equation*}
thus we obtain
\begin{equation*} 
    \|(Y^i - Y^j)(\omega_\alpha(X^j) -    \omega_\alpha(Y^j))\|_H \le 2R L_\omega \|X^j - Y^j\|_H.
\end{equation*}

Therefore, for each $j$,
\begin{equation*}
\begin{aligned}
    &\|(X^i - X^j)\omega_\alpha(X^j) - (Y^i - Y^j)\omega_\alpha(Y^j)\|_H \\
    & \le M_\omega(R)\|X^i - Y^i\|_H + \big(M_\omega(R) + 2R L_\omega\big)\|X^j - Y^j\|_H.
\end{aligned}
\end{equation*}
Summing over $j=1,\dots,N$ yields
\begin{equation*}
    \|A^i(\mathds{X}) - A^i(\mathds{Y})\|_H \,\le\, N\, M_\omega(R)\|X^i - Y^i\|_H + N\big(M_\omega(R) + 2R L_\omega\big) \|\mathds{X} - \mathds{Y}\|_{H^N}.
\end{equation*}
Finally, using $\|X^i - Y^i\|_H \le \|\mathds{X} - \mathds{Y}\|_{H^N}$, we obtain
\begin{equation*}
    \|A^i(\mathds{X}) - A^i(\mathds{Y})\|_H \le N\big(2R M_\omega(R) + 2R L_\omega\big) \|\mathds{X} - \mathds{Y}\|_{H^N}.
\end{equation*}
Let us define the constant
\begin{equation*}
    C_A(R) := N\big(2R M_\omega(R) + 2R L_\omega\big).
\end{equation*}
Thus,
\begin{equation}\label{eq: A lip}
    \|A^i(\mathds{X}) - A^i(\mathds{Y})\|_{H^N} \le C_A(R)\|\mathds{X} - \mathds{Y}\|_{H^N}.
\end{equation}

On the other hand, let us recall that
\begin{equation*}
    A^i(\mathds{Y}) = \sum_{j=1}^N (Y^i - Y^j)\,\omega_\alpha(Y^j).
\end{equation*}
Taking the $H$-norm and using the triangle inequality gives
\begin{equation*}
    \|A^i(\mathds{Y})\|_{H} \le \sum_{j=1}^N \|Y^i - Y^j\|_{H}\,\omega_\alpha(Y^j).
\end{equation*}
Since $\|\mathds{Y}\|_{H^N} \le R$, we have $\;\|Y^i - Y^j\|_{H} \le \|Y^i\|_{H} + \|Y^j\|_{H} \le 2R.\;$
Moreover, $\omega_\alpha$ is bounded on bounded sets, so  $\;\omega_\alpha(Y^j) \le M_\omega(R).\;$ Combining these estimates yields
\begin{equation*}
    \|Y^i - Y^j\|_{H}\,\omega_\alpha(Y^j) \le 2R\,M_\omega(R).
\end{equation*}
Summing over $j=1,\dots,N$, we obtain
\begin{equation}\label{eq: A bound}
    \|A^i(\mathds{Y})\|_{H} \le \sum_{j=1}^N 2R\,M_\omega(R) = N \, 2R \, M_\omega(R).
\end{equation}

\textbf{Step 3.} \textit{(Conclusion)} 
We write
\begin{equation*}
    \mathbf{F}^i(\mathds{X}) - \mathbf{F}^i(\mathds{Y}) = \frac{A^i(\mathds{X}) - A^i(\mathds{Y})}{B(\mathds{X})} + A^i(\mathds{Y}) \left( \frac{1}{B(\mathds{X})} - \frac{1}{B(\mathds{Y})} \right).
\end{equation*}
\textbullet\; Using previous bounds \eqref{eq: B bound} and \eqref{eq: A lip},
\begin{equation*}
    \left\|\frac{A^i(\mathds{X}) - A^i(\mathds{Y})}{B(\mathds{X})}\right\|_{H^N} \le \frac{C_A(R)}{m_B(R)} \|\mathds{X} - \mathds{Y}\|_{H^N}.
\end{equation*}
\textbullet\; Using previous bounds \eqref{eq: B lip} and \eqref{eq: A bound},
\begin{equation*}
    \|A^i(\mathds{Y})\| \left|\frac{1}{B(\mathds{X})} - \frac{1}{B(\mathds{Y})}\right| \le \frac{2N R M_\omega(R)\, L_\omega}{m_B(R)^2} \|\mathds{X} - \mathds{Y}\|_{H^N}.
\end{equation*}
Thus we obtain
\begin{equation*}
    \|\mathbf{F}(\mathds{X}) - \mathbf{F}(\mathds{Y})\|_{H^N} \le C(R)\|\mathds{X} - \mathds{Y}\|_{H^N},
\end{equation*}
where $C(R)$ is a constant defined by
\begin{equation*}
    C(R) := \frac{C_A(R)}{m_B(R)} + \frac{2N R M_\omega(R)\, L_\omega}{m_B(R)^2} =  \frac{2R M_\omega(R) + 2R L_\omega}{m_\omega(R)} + \frac{2 R M_\omega(R)\, L_\omega}{N(m_\omega(R))^2}
\end{equation*}
Hence $\mathbf{F}$ is locally Lipschitz on $H^N$.

To show the linear growth of $\mathbf{F}^{i}$, it suffices to observe that
\begin{equation*}
    |\mathbf{F}^{i}(\mathds{X})| \leq \left|  X^{i} - \frac{1}{\sum\limits_{k=1}^{N}\omega_{\alpha}(X^{k})} \, \sum\limits_{j=1}^{N}\,X^{j}\; \omega_{\alpha}(X^{j})  \right| \leq |X^{i}| + \|\mathds{X}\|\leq 2\,\|\mathds{X}\|.
\end{equation*}
\end{proof}

\begin{theorem}[Global Well-Posedness]\label{thm: global well-posedness}
Let the standing assumptions hold, and assume the initial condition satisfies $\mathbf{x}_{\circ} \in \mathbb{H}^{N}$. Then, the SDE
\begin{equation}\label{eq: CBO main compact}
    \dd \mathds{X}_{t} = -\lambda \, \mathds{F}(\mathds{X}_{t}) \, \dd t + \sigma\, \mathbb{D}(\mathds{F}(\mathds{X}_{t})) \, \dd \mathbb{W}_{t},\quad t>0
\end{equation}
admits a unique global strong solution $\mathds{X}_{t}$ in $\mathbb{H}^{N}$.
\end{theorem}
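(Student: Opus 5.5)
The strategy is the classical localization argument for SDEs with locally Lipschitz coefficients: truncate, solve globally, and then remove the truncation with a uniform moment bound. By Lemma \ref{lem: F}, the drift $\mathds{X}\mapsto -\lambda\mathds{F}(\mathds{X})$ and the diffusion $\mathds{X}\mapsto \sigma\mathbb{D}(\mathds{F}(\mathds{X}))$ are Lipschitz on every ball $\{\|\mathds{X}\|_{H^N}\le R\}$. Moreover, $\|\mathbf{F}^i(\mathds{X})\|_H\le 2\|\mathds{X}\|_{H^N}$, and Proposition \ref{prop:diffusion}(2) gives $\|\mathsf{D}(y)\|_{\mathscr{L}_Q}\le L_{\mathsf{D}}\|y\|_H$. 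Hence both coefficients have linear growth, with constants independent of $R$.

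For each $R>0$, let $\pi_R:H^N\to H^N$ be the radial retraction $\pi_R(\mathds{X})=\mathds{X}\min\{1,R/\|\mathds{X}\|_{H^N}\}$. This map is $1$-Lipschitz. The truncated coefficients $\mathds{F}\circ\pi_R$ and $\mathbb{D}\circ\mathds{F}\circ\pi_R$ are therefore globally Lipschitz on $H^N$. By Theorem \ref{thm:solution} of Appendix \ref{app:sde in H}, applied on the Hilbert space $H^N$ with the $Q$-Wiener process $\mathbb{W}^Q$ whose covariance is $\mathrm{diag}(Q,\dots,Q)$ (still trace class, with trace $N\,\mathrm{Tr}(Q)$), there is a unique strong solution $\mathds{X}^R$ with continuous paths.

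Next define the stopping time $\tau_R=\inf\{t:\|\mathds{X}^R_t\|_{H^N}\ge R\}$. By pathwise uniqueness of the truncated equations, $\mathds{X}^R=\mathds{X}^{R'}$ on $[0,\tau_R]$ whenever $R<R'$. So $\tau_R$ is nondecreasing in $R$, and setting $\mathds{X}_t:=\mathds{X}^R_t$ for $t\le\tau_R$ consistently defines a solution on $[0,\tau_\infty)$ with $\tau_\infty=\lim_R\tau_R$. Uniqueness of \eqref{eq: CBO main compact} up to $\tau_\infty$ follows in the same way: any two strong solutions agree up to their common exit time from each ball.

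The main step is to show $\tau_\infty=\infty$ almost surely. Apply the It\^o formula (Theorem \ref{thm:Ito in H}, Example \ref{ex:Ito in H}) to $\|\mathds{X}^R_{t\wedge\tau_R}\|_{H^N}^2$. With the linear growth bounds, this gives
\[
\mathbb{E}\|\mathds{X}_{t\wedge\tau_R}\|^2\le \|\mathbf{x}_\circ\|^2+C\int_0^t \mathbb{E}\|\mathds{X}_{s\wedge\tau_R}\|^2\,\dd s,
\]
with $C$ depending on $\lambda,\sigma,L_{\mathsf{D}}$ but not on $R$. The stochastic integral has zero mean because the stopped integrand is bounded. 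Gr\"onwall's lemma then gives $\mathbb{E}\|\mathds{X}_{t\wedge\tau_R}\|^2\le \|\mathbf{x}_\circ\|^2e^{Ct}$, uniformly in $R$. On the event $\{\tau_R\le t\}$ we have $\|\mathds{X}_{t\wedge\tau_R}\|=R$ by path continuity, so Chebyshev's inequality yields $\mathbb{P}(\tau_R\le t)\le \|\mathbf{x}_\circ\|^2e^{Ct}/R^2\to0$. Thus $\tau_\infty>t$ almost surely for every $t$, the solution is global, and uniqueness on every $[0,T]$ follows. The care needed lies in this nonexplosion estimate: the constant must truly be $R$-independent, which rests on the linear-growth bounds rather than on the $R$-dependent local Lipschitz constants. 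One must also verify that the truncation is consistent with pathwise uniqueness; I expect this to be routine. Invariance of $V^N$ is not needed here.
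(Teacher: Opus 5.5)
Your proposal is correct and follows the paper's proof essentially step for step. The shared steps are: truncation to globally Lipschitz coefficients solved via Theorem \ref{thm:solution}; patching by pathwise uniqueness up to the exit times $\tau_R$; and non-explosion via It\^o's formula for $\|\mathds{X}\|_{H^N}^2$, Gr\"onwall with $R$-independent linear-growth constants, and Chebyshev's inequality on $\{\tau_R\le t\}$.

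The differences are cosmetic. Your $1$-Lipschitz radial retraction $\pi_R$ replaces the paper's smooth cutoff $\phi_R$ and makes the global Lipschitz property of the truncated coefficients immediate. Your componentwise bound $\|\mathbf{F}^i(\mathds{X})\|_H\le 2\|\mathds{X}\|_{H^N}$ is the accurate one: for the full vector it gives only $\|\mathds{F}(\mathds{X})\|_{H^N}\le 2\sqrt{N}\,\|\mathds{X}\|_{H^N}$, not the constant $2$ the paper writes, though any $R$-independent constant suffices.
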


\begin{proof}
We proceed in two main steps: we establish local existence via truncation and localization (Remark \ref{rmk:local}), then we prove non-explosion via an energy estimate applying It\^o's formula.

\textbf{Step 1.} \textit{(Local existence via truncation)} 
For each $R > 0$, we introduce a smooth cutoff function $\phi_{R}: \mathbb{H}^{N} \to [0,1]$ such that $\phi_{R}(\mathds{X}) = 1$ if $\|\mathds{X}\| \leq R$, and $\phi_{R}(\mathds{X}) = 0$ if $\|\mathds{X}\| \geq R+1$. We define the truncated coefficients 
\begin{equation*}
    \mathds{F}_{R}(\mathds{X}) := \phi_{R}(\mathds{X})\mathds{F}(\mathds{X}) \quad \text{and} \quad \mathbb{G}_{R}(\mathds{X}) := \phi_{R}(\mathds{X})\sigma\mathbb{D}(\mathds{F}(\mathds{X})).
\end{equation*}
By Lemma \ref{lem: F}, $\mathds{F}$ is locally Lipschitz. Because $\phi_{R}$ has a bounded support, the truncated drift $\mathds{F}_{R}$ and diffusion $\mathbb{G}_{R}$ are globally Lipschitz continuous and bounded on $\mathbb{H}^{N}$. 

By the standard existence and uniqueness theorem for SDEs in Hilbert spaces (see Theorem \ref{thm:solution}), the truncated SDE
\begin{equation*}
    \dd \mathds{X}_{t}^{R} = -\lambda \, \mathds{F}_{R}(\mathds{X}_{t}^{R}) \, \dd t + \mathbb{G}_{R}(\mathds{X}_{t}^{R}) \, \dd \mathbb{W}_{t},\quad \mathds{X}_{0}^{R} = \mathbf{x}_{\circ}
\end{equation*}
admits a unique global strong solution $\mathds{X}_{t}^{R}$.

We define the stopping time $\tau_{R} := \inf\{t \geq 0 : \|\mathds{X}_{t}^{R}\| \geq R\}$. By pathwise uniqueness (see Remark \ref{rmk:uniq}), for any $R' > R$, we have $\mathds{X}_{t}^{R} = \mathds{X}_{t}^{R'}$ almost surely for all $t \in [0, \tau_{R}]$. This consistency allows us to define a unique maximal local solution $\mathds{X}_{t} := \mathds{X}_{t}^{R}$ for $t \in [0, \tau_{\infty})$, where $\tau_{\infty} := \lim_{R \to \infty} \tau_{R}$ is the explosion time.

\textbf{Step 2.} \textit{(Non-explosion)} 
To prove that the solution is global, we must show that $\mathbb{P}(\tau_{\infty} = \infty) = 1$. We apply It\^o's formula to the functional $V(\mathds{X}) = \|\mathds{X}\|^{2}$ (see Theorem \ref{thm:Ito in H} and Example \ref{ex:Ito in H}). For the stopped process at $t \wedge \tau_{R}$, we obtain 
\begin{equation*}
\begin{aligned}
    \|\mathds{X}_{t \wedge \tau_{R}}\|^{2} &= \|\mathbf{x}_{\circ}\|^{2} + \int_{0}^{t \wedge \tau_{R}} \left( -2\lambda \langle \mathds{X}_{s}, \mathds{F}(\mathds{X}_{s}) \rangle + \|\sigma\mathbb{D}(\mathds{F}(\mathds{X}_{s}))\|_{\text{HS}}^{2} \right) \dd s \\
    &\quad + \int_{0}^{t \wedge \tau_{R}} 2\sigma \langle \mathds{X}_{s}, \mathbb{D}(\mathds{F}(\mathds{X}_{s})) \dd \mathbb{W}_{s} \rangle,
\end{aligned}
\end{equation*}
where $\|\cdot\|_{\text{HS}}$ denotes the Hilbert-Schmidt norm. 

Taking the expectation on both sides eliminates the stochastic integral, since the stopped integrand is bounded, making it a true martingale. We estimate the terms inside the Lebesgue integral using the Cauchy-Schwarz inequality and the linear growth property $\|\mathds{F}(\mathds{X})\| \leq 2\|\mathds{X}\|$ from Lemma \ref{lem: F}:
\begin{equation*}
    -2\lambda \langle \mathds{X}_{s}, \mathds{F}(\mathds{X}_{s}) \rangle \leq 2\lambda \|\mathds{X}_{s}\| \|\mathds{F}(\mathds{X}_{s})\| \leq 4\lambda \|\mathds{X}_{s}\|^{2}.
\end{equation*}
For the diffusion term, since $\mathbb{D}(\cdot)$ is a bounded linear operator thanks to Proposition \ref{prop:diffusion}, whose norm is controlled by the supremum norm of its argument, there exists a constant $C > 0$ such that 
\begin{equation*}
    \|\sigma\mathbb{D}(\mathds{F}(\mathds{X}_{s}))\|_{\text{HS}}^{2} \leq \sigma^{2} C \|\mathds{F}(\mathds{X}_{s})\|^{2} \leq 4\sigma^{2} C \|\mathds{X}_{s}\|^{2}.
\end{equation*}
Combining these bounds yields a constant $K = 4\lambda + 4\sigma^{2}C$ such that 
\begin{equation*}
    \mathbb{E}\big[\|\mathds{X}_{t \wedge \tau_{R}}\|^{2}\big] \leq \|\mathbf{x}_{\circ}\|^{2} + K \, \mathbb{E} \left[ \int_{0}^{t \wedge \tau_{R}} \|\mathds{X}_{s}\|^{2} \dd s \right] \leq \|\mathbf{x}_{\circ}\|^{2} + K \int_{0}^{t} \mathbb{E}\big[\|\mathds{X}_{s \wedge \tau_{R}}\|^{2}\big] \dd s.
\end{equation*}
By Gr\"onwall's lemma, we obtain the bound
\begin{equation*}
    \mathbb{E}\big[\|\mathds{X}_{t \wedge \tau_{R}}\|^{2}\big] \leq \|\mathbf{x}_{\circ}\|^{2} e^{Kt}.
\end{equation*}

To conclude non-explosion, we fix an arbitrary time $T > 0$. 
To establish non-explosion, we relate the expected value of the stopped process to the probability of exiting the ball $B_R$. Because the strong solution possesses continuous sample paths, the process precisely intersects the boundary at the stopping time, meaning $\|\mathds{X}_{\tau_R}\|^2 = R^2$ almost surely on the event $\{\tau_R \leq T\}$. By decomposing the expectation over disjoint events and discarding the non-negative contribution from $\{\tau_R > T\}$, we obtain the lower bound:
\begin{align*}
    \mathbb{E}\big[\|\mathds{X}_{T \wedge \tau_R}\|^2\big] &\geq \mathbb{E}\Big[\|\mathds{X}_{T \wedge \tau_R}\|^2 \mathbf{1}_{\{\tau_R \leq T\}}\Big] \\
    &= \mathbb{E}\Big[\|\mathds{X}_{\tau_R}\|^2 \mathbf{1}_{\{\tau_R \leq T\}}\Big] \\
    &= R^2 \, \mathbb{P}(\tau_R \leq T).
\end{align*}
Rearranging the inequality yields:
\begin{equation*}
     \mathbb{P}(\tau_R \leq T) \leq \frac{1}{R^{2}} \mathbb{E}\big[\|\mathds{X}_{T \wedge \tau_{R}}\|^{2}\big]\leq \frac{\|\mathbf{x}_{\circ}\|^{2} e^{KT}}{R^{2}}.
\end{equation*}
Taking the limit as $R \to \infty$, we find $\mathbb{P}(\tau_{\infty} \leq T) = 0$. Since $T$ was arbitrary, we conclude $\mathbb{P}(\tau_{\infty} = \infty) = 1$, meaning the solution does not explode in finite time and is globally well-posed.
\end{proof}

\section{Numerical Framework and Implementation}
\label{sec:numerics}

\subsection{A Numerical Example}

In this section, we delineate the numerical strategy for implementing CBO in an infinite-dimensional setting. To validate the theoretical results established in Section \ref{sec:convergence}, we demonstrate the model's ability to minimize functionals over the separable Hilbert space $H = L^2([0, 1])$.

The structure of our numerical illustration follows a three-step projection-realization-integration process:

\begin{enumerate}[label = \arabic*.]
    \item \textbf{Function Space Projection (The Active Subspace):} To transition from the continuous theory to a computable algorithm, we approximate the infinite-dimensional Hilbert space $H$ by a finite-dimensional subspace $H_M \subset H$. In the context of our theoretical framework (Assumption \ref{assump:active_subspace}), this discretization space $H_M$ acts exactly as our active subspace $V$. Let $\{\phi_k\}_{k=1}^M$ be an orthonormal basis of $V$. Any particle $X_t^{(n)}$ is represented purely by its active spectral coefficients:
    \begin{equation*}
        X_t^{(n)}(x) \approx \sum_{k=1}^M c_{k,t}^{(n)} \phi_k(x).
    \end{equation*}
    For our experiments, we utilize the sine basis $\phi_k(x) = \sqrt{2}\sin(k \pi x)$, which naturally enforces zero boundary conditions.

    \item \textbf{Realization of the $Q$-Wiener Process:} The driving noise must satisfy the trace-class condition $\mathrm{Tr}(Q) < \infty$. Numerically, we achieve this by assigning decaying variance to higher-frequency modes. The stochastic increment $\Delta W_t^{Q(n)}$ for each particle is computed as 
    \begin{equation*}
        \Delta W_t^{Q(n)}(x) = \sum_{k=1}^M \sqrt{\lambda_k} \, \zeta_{k,t}^{(n)} \phi_k(x), \quad \zeta_{k,t}^{(n)} \sim \mathcal{N}(0, \Delta t),
    \end{equation*}
    where $\lambda_k = k^{-2}$ ensures the summability of the eigenvalues.

    \item \textbf{Quadrature-based Consensus:} The consensus point $\mathfrak{m}_{\alpha}(\mu_t)$ requires the evaluation of the Gibbs weights $w^{(n)} = \exp(-\alpha\, \mathscr{E}(X_t^{(n)}))$. We approximate the functional $\mathscr{E}$ and the consensus integral using a high-resolution grid in the spatial domain $[0, 1]$.
\end{enumerate}

The implementation of the CBO scheme in the present setting is summarized in Algorithm \ref{alg:cbo_hilbert}.

\begin{algorithm}
\caption{Consensus-Based Optimization in Hilbert Space $H$}
\label{alg:cbo_hilbert}
\begin{algorithmic}[1]
\REQUIRE Objective functional $\mathscr{E}: H \to \mathbb{R}$, swarm size $N$, discretization resolution $M$, time step $\Delta t$, consensus parameters $\alpha, \lambda, \nu$, and trace-class covariance $Q$.
\ENSURE Approximated global minimizer $u^* \in H$.

\bigskip

\STATE \textbf{Initialize:} 
\STATE Select orthonormal basis $\{\phi_k\}_{k=1}^M$ of $H_M \subset H$.
\STATE Set eigenvalues $\lambda_k$ such that $\sum_{k=1}^M \lambda_k < \infty$.
\STATE Sample $N$ initial particles $\{X_0^{(1)}, \dots, X_0^{(N)}\}$ in $H_M$.

\bigskip

\FOR{$t = 0, \Delta t, 2\Delta t, \dots, T-\Delta t$}
    \STATE \textbf{1. Evaluate Energies:} 
    \FOR{each particle $n=1, \dots, N$}
        \STATE Compute $E_t^{(n)} = \mathscr{E}(X_t^{(n)})$.
    \ENDFOR

\bigskip

    \STATE \textbf{2. Compute Stable Gibbs Weights (Log-Sum-Exp):}
    \STATE Find $M_{\max} = \max_j (-\alpha E_t^{(j)})$.
    \FOR{each particle $n=1, \dots, N$}
        \STATE $\omega_t^{(n)} = \frac{\exp(-\alpha E_t^{(n)} - M_{\max})}{\sum_{j=1}^N \exp(-\alpha E_t^{(j)} - M_{\max})}$.
    \ENDFOR

\bigskip

    \STATE \textbf{3. Calculate Consensus Point:}
    \STATE $\mathfrak{m}_{\alpha}(\mu_t) = \sum_{n=1}^N \omega_t^{(n)} X_t^{(n)}$.

\bigskip

    \STATE \textbf{4. Update Swarm Particles:}
    \FOR{each particle $n=1, \dots, N$}
        \STATE \textbf{a. Sample $Q$-Wiener increment:}
        \STATE $\Delta W_t^{Q(n)} = \sum_{k=1}^M \sqrt{\lambda_k} \; \zeta_k\; \phi_k$, where $\zeta_k \sim \mathcal{N}(0, \Delta t)$.
        \STATE \textbf{b. Compute Hilbert distance:}
        \STATE $d_H^{(n)} = \|X_t^{(n)} - \mathfrak{m}_{\alpha}(\mu_t)\|_H$.
        \STATE \textbf{c. Update position via SDE discretization:}
        \STATE Because $\Delta W_t^{Q(n)}$ is constructed strictly from the basis of $V$, the projection $\mathsf{P}_V$ acts as the identity:
        \STATE $X_{t+\Delta t}^{(n)} = X_t^{(n)} - \lambda (X_t^{(n)} - \mathfrak{m}_{\alpha}(\mu_t)) \Delta t + \sigma\, d_H^{(n)}\, \mathsf{P}_V\, \Delta W_t^{Q(n)}$.
    \ENDFOR
\ENDFOR

\bigskip

\RETURN $\mathfrak{m}_{\alpha}(\mu_T)$
\end{algorithmic}
\end{algorithm}


\subsection{Illustration: Recovery of a Target Function}

We consider the task of recovering a hidden target function $f \in H$ by minimizing the quadratic functional 
\begin{equation*}
    \mathscr{E}(u) = \frac{1}{2} \int_0^1 |u(x) - f(x)|^2\, \dd x.
\end{equation*}
We set $f(x) = \sin(2\pi x)$ and initialize $N=2000$ particles as random smooth functions with coefficients $c_{k,0} \sim \mathcal{N}(0, 0.5)$.

For the numerical experiment, we employ $N=2000$ particles and a spectral discretization with resolution $M=1000$. The stochastic dynamics are evolved up to final time $T=6$ using a time step $\Delta t=0.01$. The model parameters are chosen as $\lambda=1.0$ for the drift coefficient and $\sigma=0.8$ for the noise intensity. The Laplace parameter appearing in the consensus weights is fixed to $\alpha=10^{15}$.

As observed in Figure \ref{fig:convergence_evolution}, the $Q$-Wiener process ensures that the particles remain smooth functions within $H$ throughout the optimization. The transition from $t=0.01$ to $t=1$ empirically  demonstrates that the weighted mean $\mathfrak{m}_{\alpha}$ effectively tracks the global minimizer even in the presence of high-dimensional stochastic noise.

\newpage

To numerically validate the theoretical guarantees established in Theorem \ref{thm:global_convergence_degenerate}, we track the evolution of the squared $L^2$ error between the empirical consensus point $m_\alpha$ and the global minimizer $f$. 
Figure \ref{fig:exponential_convergence} illustrates this trajectory on a semi-logarithmic scale. For $t \in [0, 2]$, the empirical error approximately follows a linear downward path. On a logarithmic axis, this linear decay confirms the global exponential convergence rate  of the infinite-dimensional interacting particle system. For $t > 3$, the error stabilizes at a small residual value (approximately $1.5 \times 10^{-2}$). It reflects the expected discretization limits of the fully computable numerical scheme. This terminal accuracy is governed by a combination of the finite-particle approximation ($N=2000$), the Euler-Maruyama time-stepping resolution ($\Delta t = 0.01$), and the spatial truncation of the infinite-dimensional Hilbert space onto a finite grid ($M=1000$). 

\begin{figure}[h]
     \centering
     \begin{subfigure}[b]{0.32\textwidth}
         \centering
         \includegraphics[width=\textwidth]{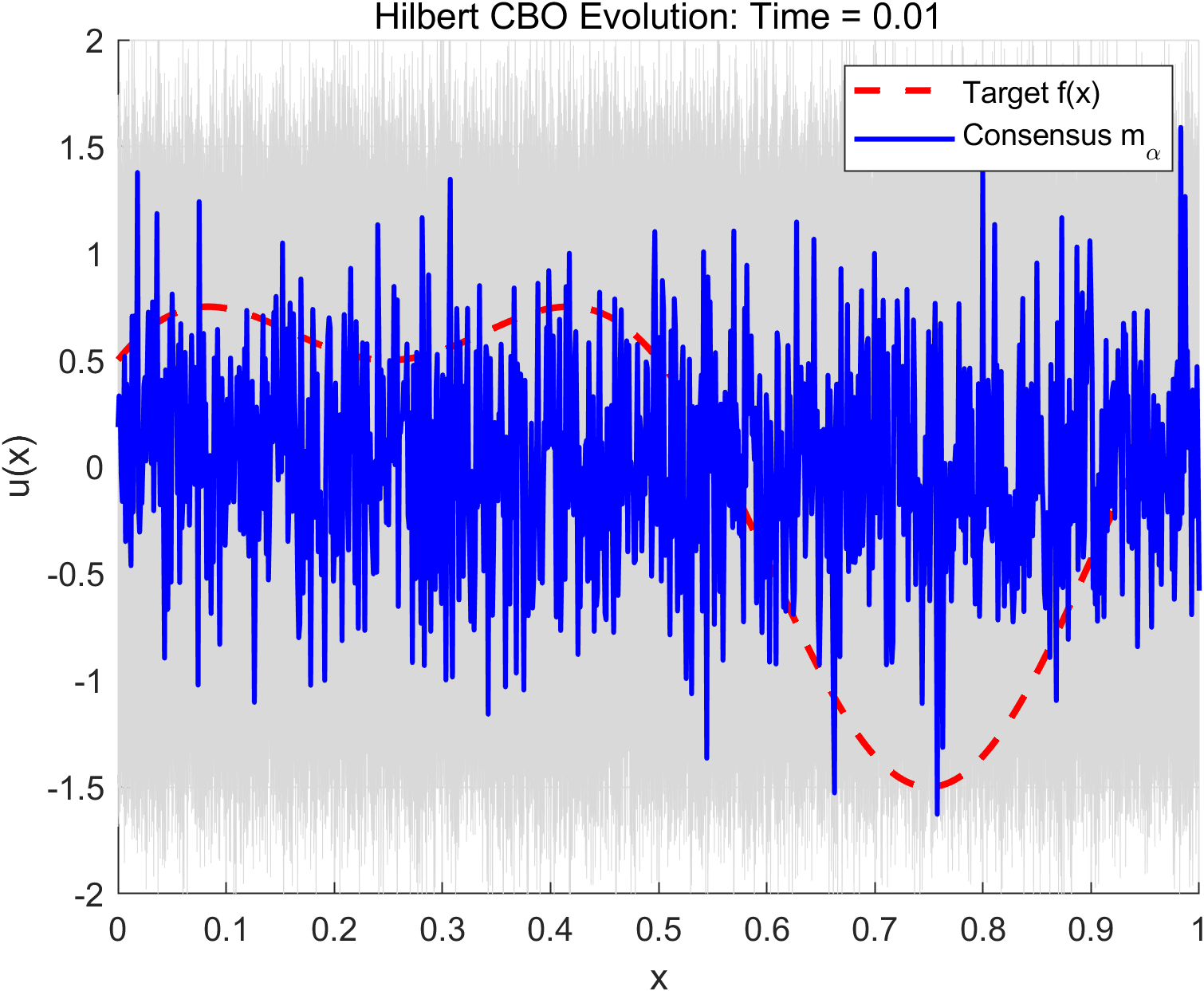}
         \caption{$t = 0.01$ (Initial)}
     \end{subfigure}
     \hfill
     \begin{subfigure}[b]{0.32\textwidth}
         \centering
         \includegraphics[width=\textwidth]{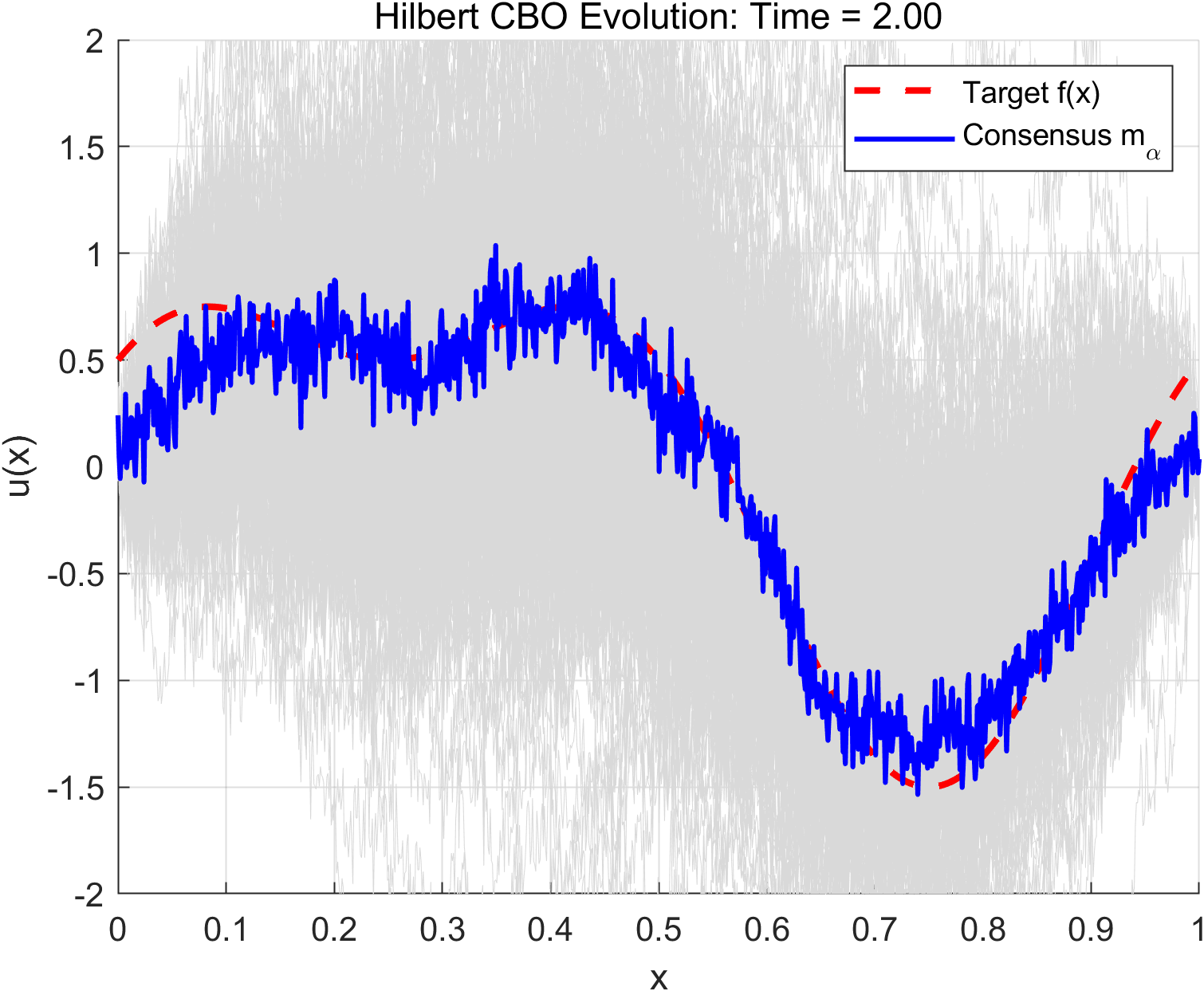}
         \caption{$t = T/2$ (Transition)}
     \end{subfigure}
     \hfill
     \begin{subfigure}[b]{0.32\textwidth}
         \centering
         \includegraphics[width=\textwidth]{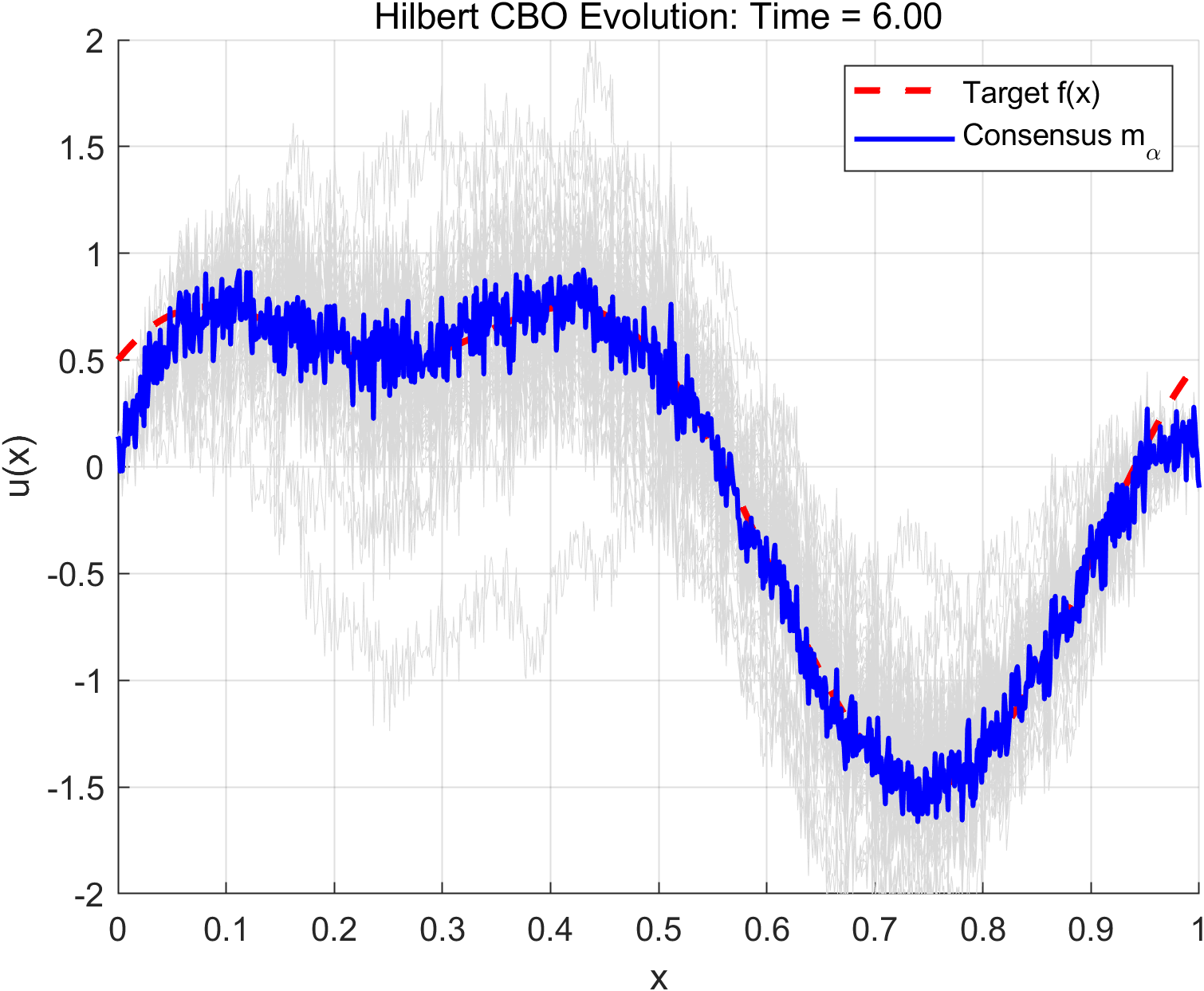}
         \caption{$t = T$ (Consensus)}
     \end{subfigure}
     \caption{Evolution of CBO particle swarm in $L^2([0,1])$. Red dashed: target minimizer $f$. Grey: individual particles. Blue: consensus $\mathfrak{m}_{\alpha}$.}
     \label{fig:convergence_evolution}
\end{figure}

\begin{figure}[H]
    \centering
    \includegraphics[width=0.55\textwidth]{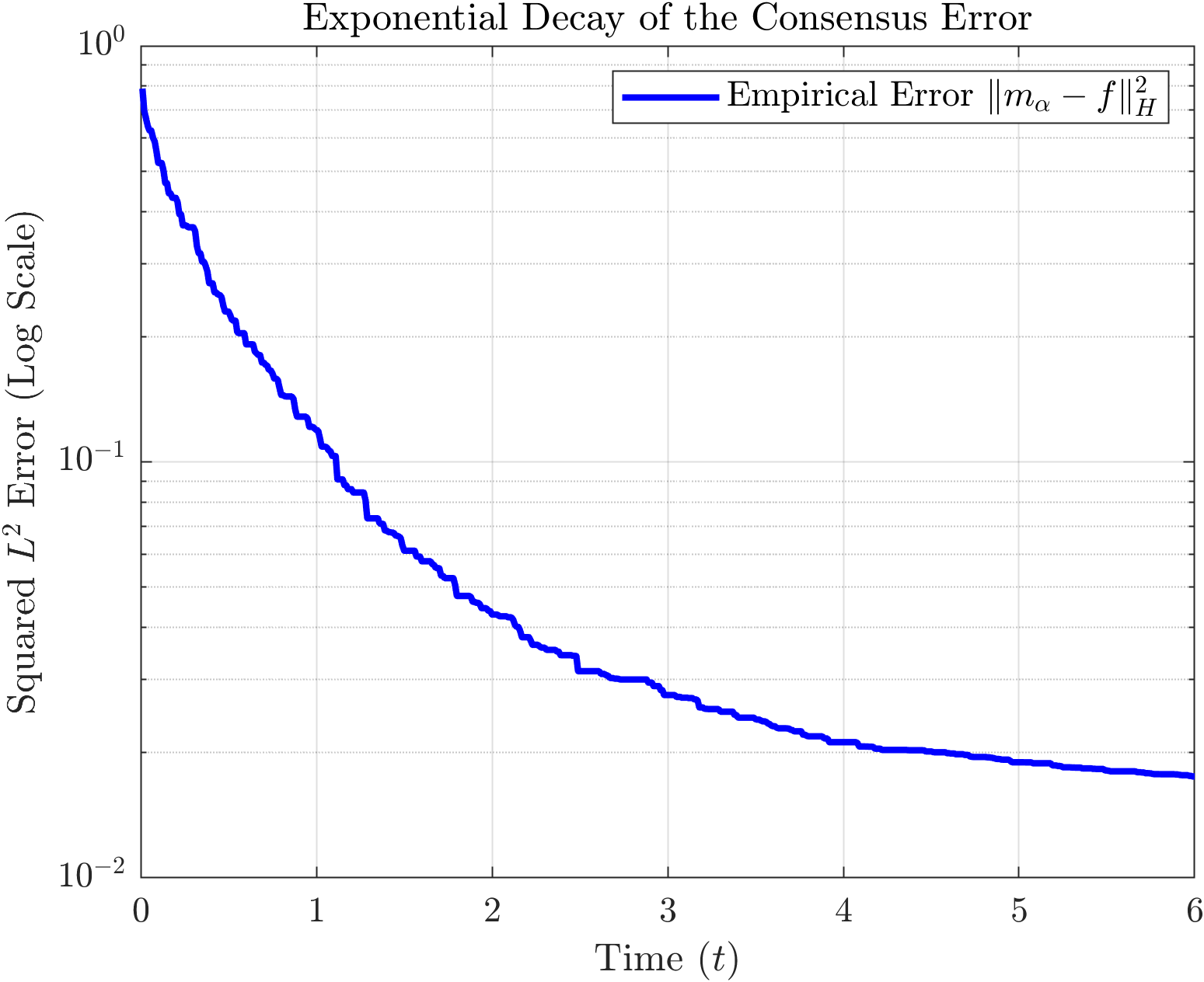}
    \caption{Empirical validation of Theorem \ref{thm:global_convergence_degenerate}. The line represents the squared $L^2$ error, plotted on a logarithmic scale. The initial linear decay confirms the theoretical global exponential convergence. The stabilization at the bottom reflects the finite-particle and time-stepping discretization errors, and spatial truncation (see Remark \ref{rem:subspace_minimizer}).}
    \label{fig:exponential_convergence}
\end{figure}

\section{Conclusion}

In this work, we have developed a Consensus-Based Optimization framework in separable Hilbert spaces, providing a rigorous extension of the classical finite-dimensional model to infinite-dimensional settings. The proposed formulation is based on a stochastic interacting particle system together with its mean-field limit, preserving the characteristic consensus-driven structure of CBO while accommodating the analytical challenges of infinite-dimensional dynamics.

We established well-posedness of the resulting dynamics via a fixed-point argument and analyzed their long-time behavior, showing convergence toward the minimum under suitable assumptions on the objective functional. In addition, we studied the corresponding finite-particle system and derived a practical numerical scheme, supported by illustrative experiments.

The present work provides a first step toward a systematic theory of derivative-free optimization methods in infinite-dimensional spaces. Several directions remain open, including sharper convergence guarantees, extensions to more general diffusion structures, and applications to constrained or structured optimization problems in function spaces.

\section*{Acknowledgment}

This work is partially supported by the starting grant from Hunan University. 
The authors also wish to thank Michael Herty for fruitful discussions, in particular during the visit of H.K. to RWTH Aachen University in Spring 2025, to which he is grateful for its hospitality and support.

\appendix

\section{SDEs in Hilbert Spaces}\label{app:sde in H}

In this appendix, we briefly recall the functional-analytic and probabilistic framework required for stochastic differential equations on separable Hilbert spaces, together with a standard well-posedness result and the corresponding infinite-dimensional It\^o formula. The presentation is restricted to the simplified setting relevant to the present work.

The appendix is organized as follows. We first recall some standard operator norms and the Hilbert--Schmidt framework, before introducing the notion of a $Q$-Wiener process on a separable Hilbert space. We then briefly discuss measurability and predictability, followed by the definition of the stochastic integral with respect to a $Q$-Wiener process. Next, we present the class of Hilbert-space-valued stochastic differential equations considered in this work, together with a standard existence and uniqueness result in the Lipschitz setting. We subsequently state the corresponding infinite-dimensional It\^o formula and conclude with a comparison to the classical finite-dimensional framework.

\subsection{Some Standard Norms}\label{app:norms}
We recall some standard norms used in infinite-dimensional analysis. 

\begin{definition}[Hilbert--Schmidt Norm]\label{def:HS norm}
Let $H$ be a separable Hilbert space with orthonormal basis $(\mathbf{e}_i)_{i \in \mathbb{N}}$. For a bounded linear operator $T \in \mathscr{L}(H)$, the Hilbert--Schmidt norm is defined as
\begin{equation}
    \|T\|_{\mathrm{HS}} := \left( \sum_{i=1}^\infty \|T \mathbf{e}_i\|_H^2 \right)^{1/2}.
\end{equation}
The space of Hilbert--Schmidt operators $\mathscr{L}_2(H) = \{T \in \mathscr{L}(H) : \|T\|_{\mathrm{HS}} < \infty\}$ is a Hilbert space equipped with the inner product
\[
    \langle T, S \rangle_{\mathrm{HS}} = \mathrm{Tr}(S^* T).
\]
\end{definition}

We next recall the relation between the Hilbert--Schmidt norm and the operator norm. The operator norm of $B \in \mathscr{L}(H)$ is defined by
\[
    \|B\|_{\mathrm{op}} := \sup_{\|x\|_H=1} \|Bx\|_H.
\]

Let $Q$ be a positive trace-class operator on $H$. We now introduce the space of $Q$-Hilbert--Schmidt operators,
\[
    \mathscr{L}_Q(H) := \{B \in \mathscr{L}(H) : BQ^{1/2} \in \mathscr{L}_2(H)\},
\]
equipped with the norm
\[
    \|B\|_{\mathscr{L}_Q} := \|BQ^{1/2}\|_{\mathrm{HS}}.
\]
Equivalently, for any orthonormal basis $(\mathbf{e}_i)_{i\in\mathbb{N}}$ of $H$, one has
\[
    \|B\|_{\mathscr{L}_Q}^2 = \sum_{i=1}^\infty \|BQ^{1/2}\mathbf{e}_i\|_H^2.
\]

Using the identity
\[
    \|T\|_{\mathrm{HS}}^2 = \mathrm{Tr}(TT^*), \qquad \text{with } T = BQ^{1/2},
\]
we obtain the equivalent trace representation
\[
    \|B\|_{\mathscr{L}_Q}^2 = \mathrm{Tr}(BQB^*).
\]

We now recall a fundamental estimate relating operator and Hilbert--Schmidt norms
\[
    \|ST\|_{\mathrm{HS}} \le \|S\|_{\mathrm{op}} \, \|T\|_{\mathrm{HS}},
\]
valid for every bounded operator $S$ and every Hilbert--Schmidt operator $T$. 
Applying this inequality with $S = B$ and $T = Q^{1/2}$ yields
\[
    \|BQ^{1/2}\|_{\mathrm{HS}} \le \|B\|_{\mathrm{op}} \, \|Q^{1/2}\|_{\mathrm{HS}}.
\]
Since $\|BQ^{1/2}\|_{\mathrm{HS}} = \|B\|_{\mathscr{L}_Q}$ and, for a positive operator $Q$,
\[
    \|Q^{1/2}\|_{\mathrm{HS}}^2 = \mathrm{Tr}(Q),
\]
we conclude that
\[
    \|B\|_{\mathscr{L}_Q}^2 = \mathrm{Tr}(BQB^*) \le \|B\|_{\mathrm{op}}^2 \, \mathrm{Tr}(Q).
\]

\subsection{The $Q$-Wiener Process}\label{app:noise}

This follows and complement the discussion in \S\ref{subsec:q_wiener}.

Recall that in a finite-dimensional space $\mathbb{R}^d$, a standard Brownian motion can be written as
\[
    W_t = \sum_{i=1}^{d} W_t^{i}\,\mathbf{e}_i,
\]
where $(W_t^{i})_{i=1}^d$ are independent standard Brownian motions and $(\mathbf{e}_i)_{i=1}^d$ is the canonical basis of $\mathbb{R}^d$. In this case, the covariance operator is the identity.

A natural infinite-dimensional analogue would formally correspond to
\[
    W_t = \sum_{i=1}^{\infty} W_t^{i}\,\mathbf{e}_i,
\]
but this expression does not define an $H$-valued random variable. Indeed, by It\^o's isometry,
\[
    \mathbb{E}\|W_t\|_H^2 = t \sum_{i=1}^{\infty} \|\mathbf{e}_i\|_H^2 = \infty.
\]
To obtain an $H$-valued process, one introduces a covariance operator $Q:H\to H$, assumed to be symmetric, positive, and trace-class. Then there exists an orthonormal basis $(\mathbf{e}_i)_{i\in\mathbb{N}}$ of $H$ and eigenvalues $(\lambda_i)_{i\in\mathbb{N}}$ such that
\[
    Q\mathbf{e}_i = \lambda_i \mathbf{e}_i, \qquad \sum_{i=1}^\infty \lambda_i < \infty.
\]
This motivates the definition of a $Q$-Wiener process.

\begin{definition}[See page 165 in \cite{chow2007stochastic}]
A stochastic process $X_t$ in a Hilbert space $H$ is said to be \textit{$H$-valued Gaussian process} if for any $g\in H$, we have $\langle X_t, g\rangle$ is a real-valued Gaussian process. Let $Q$ be a trace-class self-adjoint operator on $K$. 
An $H$-valued stochastic process $\{W_t,\,t\geq 0\}$ is called a Wiener process in $H$ with covariance operator $Q$, or a $Q$-Wiener process, if
\begin{enumerate}
    \item $W_t$ is a continuous process in $H$ for $t\geq 0$ with $W_0 = 0$ a.s.,
    \item $W_t$ has stationary, independent increments, and
    \item $W_t$ is a centered Gaussian process in $K$ with covariance operator $Q$ such that 
    \[
    \mathbb{E}[\langle W_t, g \rangle] = 0, \quad \mathbb{E}[\langle W_t,g \rangle \langle W_s, h \rangle] = (t\wedge s) \langle Qg,h \rangle
    \]
    for any $s,t\in [0,\infty)$, $g,h\in H$
\end{enumerate}
Such $W_t$ is a continuous $\mathscr{F}_t$-adapted $L^2$-martingale in $H$ with local covariation operator $Q_t = t\,Q$ a.s. $\forall\, t\geq 0$. 
\end{definition}

\begin{theorem}(\cite[Theorem 2.5 in Chapter 6, p.165]{chow2007stochastic})
Let $(\mathbf{e}_k)_{k\in\mathbb{N}}$ be an orthonormal basis of eigenvectors of $Q$ with eigenvalues $(\lambda_k)_{k\in\mathbb{N}}$. Then the $Q$-Wiener process admits the representation
\[
    W_{t}^{Q} = \sum_{k=1}^{\infty}\sqrt{\lambda_k}\, \beta_{t}^{k}\,\mathbf{e}_{k},
\]
where $(\beta_t^k)_{k\in\mathbb{N}}$ is a sequence of independent, identically distributed standard Brownian motions in one dimension. Moreover, the series converges uniformly on any finite interval $[0,T]$ with probability one. 
\end{theorem}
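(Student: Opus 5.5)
This is the Karhunen--Lo\`eve representation of a $Q$-Wiener process. I would prove it by building the series and checking the three defining properties of a $Q$-Wiener process against it.

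\textbf{Step 1: $L^2$ convergence for fixed $t$.} Set $W^n_t := \sum_{k=1}^n \sqrt{\lambda_k}\,\beta^k_t\,\mathbf{e}_k$. By orthonormality and independence,
\[
\mathbb{E}\|W^n_t - W^m_t\|_H^2 = t\sum_{k=m+1}^n \lambda_k .
\]
This tends to zero as $m,n\to\infty$ because $\mathrm{Tr}(Q)<\infty$, so the series converges in $L^2(\Omega;H)$ for each fixed $t$.

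\textbf{Step 2: almost sure uniform convergence on $[0,T]$.} The process $(W^n_t - W^m_t)_{t\in[0,T]}$ is a continuous square-integrable $H$-valued martingale, since each $\beta^k$ is a martingale and the sum is finite. Doob's maximal inequality therefore gives
\[
\mathbb{E}\sup_{t\le T}\|W^n_t-W^m_t\|_H^2 \le 4\,\mathbb{E}\|W^n_T-W^m_T\|_H^2 = 4T\sum_{k=m+1}^n\lambda_k .
\]
So $(W^n)$ is Cauchy in $L^2(\Omega;C([0,T];H))$. Choose $n_j$ with $\sum_{k>n_j}\lambda_k\le 4^{-j}$. By Chebyshev and Borel--Cantelli, $\sup_{t\le T}\|W^{n_{j+1}}_t-W^{n_j}_t\|_H\le 2^{-j/2}$ eventually almost surely, so the subsequence converges uniformly almost surely.

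Upgrading this to the full sequence is the main technical point. I would handle it in one of two ways. The first is Lévy's theorem (Itô--Nisio): for sums of independent symmetric $C([0,T];H)$-valued random elements, convergence in probability of the partial sums implies almost sure convergence. The second avoids that theorem: apply Doob to the blocks $\sup_{n_j\le n\le n_{j+1}}\sup_t\|W^n_t-W^{n_j}_t\|_H$. Since $(W^n_t)_n$ is a martingale in $n$ for fixed $t$, the double supremum can be controlled by a Doob/Lévy-type maximal inequality over both indices, and Borel--Cantelli then applies as before. The limit $W^Q$ is a uniform limit of continuous paths, hence continuous, and $W^Q_0=0$.

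\textbf{Step 3: the defining properties.}
\begin{itemize}
\item[] \emph{Gaussianity.} For $g\in H$, $\langle W^Q_t,g\rangle=\sum_k\sqrt{\lambda_k}\,\beta^k_t\langle \mathbf{e}_k,g\rangle$ is an $L^2$ limit of centered jointly Gaussian processes, so it is a centered Gaussian process.
\item[] \emph{Covariance.} By independence of the $\beta^k$,
\[
\mathbb{E}[\langle W^Q_t,g\rangle\langle W^Q_s,h\rangle] = (t\wedge s)\sum_k\lambda_k\langle \mathbf{e}_k,g\rangle\langle \mathbf{e}_k,h\rangle = (t\wedge s)\langle Qg,h\rangle .
\]
\item[] \emph{Increments.} Each partial sum has stationary, independent increments, inherited from the $\beta^k$. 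These properties are preserved under $L^2$ limits, because they are characterized by Gaussian covariances of finitely many increments.
\end{itemize}
Hence the limit is a $Q$-Wiener process.

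\textbf{Step 4: identification with a given $Q$-Wiener process.} If instead one starts from a given $Q$-Wiener process $W$, define $\beta^k_t:=\lambda_k^{-1/2}\langle W_t,\mathbf{e}_k\rangle$ for $\lambda_k>0$. The covariance identity shows these are independent standard Brownian motions, with independence following from joint Gaussianity and zero cross-covariance. The same computation shows $\mathbb{E}\big\|W_t-\sum_{k\le n}\sqrt{\lambda_k}\,\beta^k_t\,\mathbf{e}_k\big\|_H^2 = t\sum_{k>n}\lambda_k\to0$, so $W$ equals the series, which converges uniformly by Step 2.
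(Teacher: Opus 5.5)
\paragraph{Overall.} The paper does not prove this statement; it only cites Chow. Your argument is correct and follows the standard proof of that result: coordinates $\beta^k_t=\lambda_k^{-1/2}\langle W_t,\mathbf{e}_k\rangle$, independence from Gaussianity plus vanishing covariances, and Doob's maximal inequality for the tails. There is one place where you work harder than necessary.

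\paragraph{Subsequence versus full sequence.} The step you single out as the main technical point is passing from almost sure uniform convergence along a subsequence to the full sequence. It needs neither the L\'evy/It\^o--Nisio theorem nor a two-index maximal inequality, because orthonormality makes the partial sums monotone. For $m<n$,
\[
\sup_{t\le T}\|W^n_t-W^m_t\|_H^2=\sup_{t\le T}\sum_{k=m+1}^{n}\lambda_k|\beta^k_t|^2\le\sum_{k>m}\lambda_k\sup_{t\le T}|\beta^k_t|^2 .
\]
By the scalar Doob inequality, the series $\sum_k\lambda_k\sup_{t\le T}|\beta^k_t|^2$ has expectation at most $4T\,\mathrm{Tr}(Q)$. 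So it is finite almost surely, and its tails tend to zero almost surely. This gives almost sure uniform convergence of the whole series in one step. Equivalently, $\sup_{t\le T}\|W_t-W^n_t\|_H$ is nonincreasing in $n$, so convergence in probability already forces almost sure convergence; this is presumably how the cited source argues.

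Your first option (L\'evy's theorem) is valid, since $C([0,T];H)$ is a separable Banach space. In your second option the stated justification is off. The martingale property of $(W^n_t)_n$ for each fixed $t$ does not by itself control $\sup_n\sup_t$. You would need the $C([0,T];H)$-valued martingale $n\mapsto W^n-W^{n_j}$ with respect to $\sigma(\beta^1,\dots,\beta^n)$, whose norm is a submartingale. In any case, monotonicity makes the block supremum equal to $\sup_{t\le T}\|W^{n_{j+1}}_t-W^{n_j}_t\|_H$.

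\paragraph{Two small points in Step 4.}
\begin{itemize}
\item For indices with $\lambda_k=0$, you must adjoin independent Brownian motions (possibly on an enlarged probability space) to obtain a full i.i.d.\ sequence. This is harmless, since those terms vanish.
\item Independence of the $\beta^k$ rests on joint Gaussianity of $\{\langle W_t,\mathbf{e}_k\rangle\}_{k,t}$. This follows from the Gaussianity in the definition combined with independent increments, and is worth stating.
\end{itemize}
The remaining steps check out.
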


Using It\^o's isometry, one obtains
\[
    \mathbb{E}[\,\|W_t^{Q}\|_H^2\,] = t \sum_{i=1}^{\infty} \lambda_i = t\,\mathrm{Tr}(Q),
\]
so that $W_t^Q \in H$ almost surely.

The series representation of the $Q$-Wiener process shows that the noise acts independently along the directions $(\mathbf{e}_k)_{k\in\mathbb{N}}$, with variances $\lambda_k$. Consequently, stochastic integration against $W_t^Q$ requires that the integrand $G(t,x)$ acts in a way that is compatible with this anisotropic covariance structure.

More precisely, if $G(t,x)$ is applied to $W_t^Q$, then formally
\[
    G(t,x)\,W_t^Q = \sum_{k=1}^\infty \sqrt{\lambda_k}\, G(t,x)\,\mathbf{e}_k\, \beta_t^k.
\]
For this series to define an $H$-valued random variable in $L^2(\Omega;H)$, one requires
\[
    \sum_{k=1}^\infty \lambda_k \|G(t,x)\,\mathbf{e}_k\|_H^2 < \infty,
\]
which is equivalent to
\[
    G(t,x)Q^{1/2} \in \mathscr{L}_2(H).
\]
This observation motivates the introduction of the space
\[
    \mathscr{L}_Q(H) := \{B \in \mathscr{L}(H) : BQ^{1/2} \in \mathscr{L}_2(H)\},
\]
which characterizes precisely the class of admissible diffusion operators for stochastic integration with respect to a $Q$-Wiener process.

\subsection{Measurability and Predictability}

All stochastic processes are defined on a filtered probability space $(\Omega,\mathscr{F},(\mathscr{F}_t)_{t\in[0,T]},\mathbb{P})$ satisfying the usual conditions.

A process $X:[0,T]\times\Omega\to H$ is called \emph{adapted} if $X_t$ is $\mathscr{F}_t$-measurable for every $t\in[0,T]$.

A process $X$ is called \emph{predictable} if it is measurable with respect to the predictable $\sigma$-algebra on $[0,T]\times\Omega$, i.e. the $\sigma$-algebra generated by all left-continuous adapted processes.

In the following, all stochastic processes are assumed to be predictable whenever required for the stochastic integration with respect to the $Q$-Wiener process.

\begin{remark}[Localization]\label{rmk:local}
When necessary, we use a standard localization argument. More precisely, we consider an increasing sequence of $(\mathscr{F}_t)$-stopping times $(\tau_n)_{n\in\mathbb N}$ such that $\tau_n \uparrow T$ almost surely, and work with the stopped processes
\[
    X^{\tau_n}_t := X_{t\wedge \tau_n}.
\]
On each interval $[0,\tau_n]$, the process satisfies the required integrability or boundedness properties, and results are then extended to $[0,T]$ by letting $n\to\infty$.
\end{remark}

\subsection{Stochastic Integral}

This subsection relies on \cite[\S 6.3, page 167]{chow2007stochastic}. One can also refer to \cite[Chapter 2]{prevot2007concise} or \cite[Chapter 2]{liu2015stochastic}.

Let $H$ be a real separable Hilbert space and let $(W_t^{Q})_{t\geq 0}$ be a $Q$-Wiener process in $H$ defined in a complete probability space $(\Omega, \mathscr{F}, \mathbb{P})$ with a filtration $(\mathscr{F}_{t})_{t\geq 0}$ of increasing $\sigma$-fields of $\mathscr{F}$. 
The stochastic integral with respect to $W_t^{Q}$ is first defined for elementary predictable processes of the form (see \cite[equ.(6.17), page 168]{chow2007stochastic})
\[
    \Phi(t) = \sum_{k=1}^n \Phi_k \mathbf{1}_{(t_{k-1},t_k]}(t), \qquad \Phi_k \in \mathscr L_Q(H),
\]
by setting
\[
    \int_0^t \Phi(s)\,\dd W_s^Q := \sum_{k=1}^n \Phi_k \bigl(W_{t_k \wedge t}^Q - W_{t_{k-1} \wedge t}^Q\bigr).
\]

For a general predictable\footnote{See \cite[page 170 of \S 6.3]{chow2007stochastic}.} process  $\Phi:[0,T]\times \Omega \to \mathscr L_Q(H)$ satisfying
\[
    \mathbb{E}\left[\int_0^T \|\Phi(s)\|_{\mathscr L_Q}^2 \,\dd s\right] < \infty,
\]
the stochastic integral is defined as the $L^2(\Omega;H)$-limit of such elementary processes (see \cite[Theorem 3.4, page 171]{chow2007stochastic} and \cite[Theorem 3.6, page 173]{chow2007stochastic}).

Moreover, the Itô isometry holds
\begin{equation}\label{eq: ito isom}
    \mathbb{E}\left[\, \left\| \int_0^t \Phi(s)\,\dd W_s^Q \right\|_H^2\,\right]
    =
    \mathbb{E}\left[ \int_0^t \|\Phi(s)\|_{\mathscr L_Q}^2 \,\dd s\right].
\end{equation}

\subsection{Stochastic Differential Equations}\label{subsec: SDE H}

The family of equations we are concerned with takes the form
\[
    \dd X_t = -\lambda\,X_t\,\dd t + F(t,X_t)\,\dd t + G(t,X_t)\,\dd W_{t}^{Q}, \qquad X_0 = \xi
\]
Since the operator $-\lambda \mathrm{I}$ where $\mathrm{I}:H\to H$ is the identity map, is bounded on $H$, it may be absorbed into the drift term. 
Moreover, the distinction between \textit{mild} and \textit{strong} solutions is unnecessary in the present setting, and the stochastic evolution equation may be treated as a standard Hilbert-space-valued stochastic differential equation.

Let $H$ be a separable Hilbert space and let $(W_t^Q)_{t\geq0}$ be an $H$-valued $Q$-Wiener process with covariance operator $Q$. We consider the stochastic differential equation
\[
    \dd X_t = F(t,X_t)\,\dd t + G(t,X_t)\,\dd W_t^Q, \qquad X_0=\xi,
\]
where
\[
    F:[0,T]\times H\to H, \qquad G:[0,T]\times H\to \mathscr L_Q(H).
\]
The space $\mathscr{L}_Q(H)$ ensures that the stochastic integral with respect to the $Q$-Wiener process is well defined as an $H$-valued random variable. This additional regularity assumption on the diffusion coefficient is necessary in the infinite dimensional setting in order for the stochastic integral to be well defined. Indeed, $G(t,x)\in \mathscr L_Q(H)$ is equivalent to
\[
    G(t,x)Q^{1/2}\in \mathscr{L}_2(H), \qquad (t,x)\in [0,T]\times H,
\]
where $\mathscr{L}_2(H)$ denotes the space of Hilbert--Schmidt operators on $H$, see Definition \ref{def:HS norm}. This Hilbert--Schmidt condition ensures that the covariance of the noise remains trace class and guarantees that the stochastic integral
\[
    \int_0^t G(s,X_s)\,\dd W_s^Q
\]
is well defined as an $H$-valued random variable, and where the stochastic integral is defined as above.


\begin{definition}[Strong solution]\label{def:solution in H}
Let $H$ be a separable Hilbert space,  $(W_t^Q)_{t\geq0}$  an $H$-valued $Q$-Wiener process, and let $F:[0,T]\times H \to H,$ and $G:[0,T]\times H \to \mathscr L_Q(H).$ 
An $H$-valued stochastic process $X=(X_t)_{t\in[0,T]}$ is called a strong solution of the stochastic differential equation
\[
    \dd X_t = F(t,X_t)\,\dd t + G(t,X_t)\,\dd W_t^Q, \qquad X_0=\xi,
\]
if the following hold
\begin{itemize}
\item $X$ is adapted (equivalently predictable) and has continuous paths $\mathbb{P}$-a.s.,
\item $X$ satisfies the integral equation $\mathbb{P}$-a.s. for all $t\in[0,T]$:
\[
    X_t = \xi + \int_0^t F(s,X_s)\,\dd s + \int_0^t G(s,X_s)\,\dd W_s^Q,
\]
\item $X \in L^2(\Omega;C([0,T];H))$, i.e.
\[
    \mathbb{E}\left[\sup_{t\in[0,T]} \|X_t\|_H^2\right] < \infty.
\]
\end{itemize}
\end{definition}

Note that in the present setting, instead of ``\textit{strong solution}'', one could simply say a unique adapted process $X \in L^2(\Omega;C([0,T];H))$.

\begin{remark}
In the general framework of stochastic evolution equations (see \cite[\S 6.7, page 197]{chow2007stochastic}), the notion of strong solution is formulated via an energy equation involving duality pairings. See \cite[equ.(6.118), page 198]{chow2007stochastic}. \\
In the present setting, since all operators take values in $H$ and no unbounded generator is present, the energy formulation is equivalent to the standard integral formulation. 
Indeed, assume that $X$ satisfies the energy equation
\[
    \langle X_t, \varphi \rangle = \langle \xi, \varphi \rangle + \int_0^t \langle F(s,X_s), \varphi \rangle\,\dd s + \int_0^t \langle G(s,X_s)\,\dd W_s^Q, \varphi \rangle, \quad \forall \varphi \in H.
\]
Define
\[
    Y_t := X_t - \xi - \int_0^t F(s,X_s)\,\dd s - \int_0^t G(s,X_s)\,\dd W_s^Q.
\]
Then $\langle Y_t,\varphi\rangle = 0$ for all $\varphi \in H$, which implies $Y_t=0$ in $H$. Hence $X$ satisfies the integral formulation
\[
    X_t = \xi + \int_0^t F(s,X_s)\,\dd s + \int_0^t G(s,X_s)\,\dd W_s^Q.
\]
Therefore, we adopt the latter as the definition of strong solution.
\end{remark}

\begin{remark}\label{rmk:mild}
In the general theory of stochastic evolution equations, mild solutions are defined via the semigroup generated by an (possibly unbounded) operator $A$. \\
In the present setting, no unbounded operator is present, so that the mild formulation reduces to
\[
    X_t = \xi + \int_0^t F(s,X_s)\,\dd s + \int_0^t G(s,X_s)\,\dd W_s^Q,
\]
which coincides with the strong (integral) formulation introduced above. In this sense, the notions of mild and strong solutions are equivalent in our framework. 
For a thorough discussion on the several notions of solutions, we refer to \cite[Item 3 in Remark 4.2.2, page 90]{liu2015stochastic} or \cite[Item 3 in Remark 4.2.2, page 74]{prevot2007concise}. See also \cite[\S1.1]{liu2015stochastic}.
\end{remark}


The following well-posedness result is \cite[Theorem 7.4, page 205]{chow2007stochastic} where we only need assumptions \cite[(C1)-(C2), page 205]{chow2007stochastic}. Indeed, the assumptions \cite[(B1)-(B3), page 198]{chow2007stochastic} concern the operator $A$ which we have absorbed in $F$ (see \S \ref{subsec: SDE H}) as it is essentially the identity map. Additionally, the quadratic bound in the following theorem is a consequence of \cite[Theorem 6.4, page 186]{chow2007stochastic} which concerns \textit{mild solutions} (but these are equivalent to our \textit{strong solutions} as discussed in Remark \ref{rmk:mild}). Furthermore, the initial discussion being random is the object of the remark in \cite[page 191]{chow2007stochastic}.

\begin{theorem}[Existence and uniqueness of strong solutions]
\label{thm:solution}
Let $H$ be a separable Hilbert space and let $(W_t^Q)_{t\ge0}$ be an $H$-valued $Q$-Wiener process. Consider the stochastic differential equation
\[
    \dd X_t = F(t,X_t)\,\dd t + G(t,X_t)\,\dd W_t^Q, \qquad X_0=\xi,
\]
where $F:[0,T]\times H\to H$, and $G:[0,T]\times H\to \mathscr L_Q(H).$ 
Assume that there exists a constant $L>0$ such that, for all
$t\in[0,T]$ and $x,y\in H$,
\begin{align*}
    \|F(t,x)-F(t,y)\|_H &\le L\|x-y\|_H, \\
    \|G(t,x)-G(t,y)\|_{\mathscr L_Q} &\le L\|x-y\|_H,
\end{align*}
and
\[
    \|F(t,x)\|_H^2 + \|G(t,x)\|_{\mathscr L_Q}^2 \le L(1+\|x\|_H^2).
\]
Let the initial condition $\xi$ be $\mathscr{F}_0$-measurable and satisfy $\mathbb{E}[\|\xi\|_H^2] < \infty.$ \\
Then there exists a unique predictable process $X\in L^2(\Omega;C([0,T];H))$  with continuous sample paths almost surely such that
\[
    X_t = \xi + \int_0^t F(s,X_s)\,\dd s + \int_0^t G(s,X_s)\,\dd W_s^Q, \qquad \mathbb{P}\text{-a.s.}, \quad \text{ for all } t\in[0,T].
\]
Moreover, there exists a constant $C>0$, depending only on $T$ and $L$, such that
\[
    \mathbb{E}\Big[ \sup_{t\in[0,T]} \|X_t\|_H^2 \Big] \le C\Big( 1+\mathbb{E}[\|\xi\|_H^2] \Big).
\]
\end{theorem}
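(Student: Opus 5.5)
The statement is the classical Lipschitz well-posedness result for Hilbert-space SDEs. I would prove it by a Picard iteration, equivalently a Banach fixed point, on the Banach space
\[
\mathcal{S}_T := \Big\{ Y \text{ predictable with continuous paths} : \mathbb{E}\Big[\sup_{t\in[0,T]}\|Y_t\|_H^2\Big]<\infty \Big\}.
\]
On this space I would define the map
\[
\Gamma(Y)_t := \xi + \int_0^t F(s,Y_s)\,\dd s + \int_0^t G(s,Y_s)\,\dd W_s^Q .
\]

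The first step is to check that $\Gamma$ maps $\mathcal{S}_T$ into itself.
\begin{itemize}
\item The linear growth bound gives $\mathbb{E}\int_0^T \|G(s,Y_s)\|_{\mathscr L_Q}^2\,\dd s<\infty$, so the stochastic integral is well defined as an $L^2(\Omega;H)$-limit of elementary processes.
\item The stochastic integral is a continuous square-integrable $H$-valued martingale.
\item For the drift, Cauchy--Schwarz in time gives $\mathbb{E}\sup_t\|\int_0^t F\,\dd s\|^2\le T\,\mathbb{E}\int_0^T\|F(s,Y_s)\|^2\,\dd s$.
\item For the martingale term, Doob's maximal inequality for $H$-valued martingales, applied to the submartingale $\|M_t\|_H$, combined with the It\^o isometry \eqref{eq: ito isom}, gives $\mathbb{E}\sup_t\|\int_0^t G\,\dd W^Q\|^2\le 4\,\mathbb{E}\int_0^T\|G(s,Y_s)\|_{\mathscr L_Q}^2\,\dd s$.
\end{itemize}
Predictability and path continuity of $\Gamma(Y)$ follow from the construction of the integrals.

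Next comes the contraction. For $Y,Z\in\mathcal{S}_T$ I would apply the same two estimates to the differences, using the Lipschitz hypotheses:
\[
\mathbb{E}\sup_{s\le t}\|\Gamma(Y)_s-\Gamma(Z)_s\|^2\le 2L^2(T+4)\int_0^t \mathbb{E}\sup_{r\le s}\|Y_r-Z_r\|^2\,\dd s .
\]
Iterating this bound gives $\|\Gamma^n(Y)-\Gamma^n(Z)\|^2\le \frac{(CT)^n}{n!}\|Y-Z\|^2$, so some power of $\Gamma$ is a strict contraction. Alternatively, one can use the weighted norm $\mathbb{E}\sup_t e^{-\beta t}\|\cdot\|^2$ with $\beta$ large. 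Banach's theorem then yields a unique fixed point, which is the strong solution.

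Uniqueness among all strong solutions in $L^2(\Omega;C([0,T];H))$ follows from the same difference estimate together with Gr\"onwall's lemma. For solutions merely known to be continuous, I would localize with the stopping times $\tau_n=\inf\{t:\|X_t\|\vee\|Y_t\|\ge n\}$ as in Remark \ref{rmk:local}.

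The moment bound comes from applying the estimate of the first step to $X=\Gamma(X)$:
\[
\mathbb{E}\sup_{s\le t}\|X_s\|^2\le 3\,\mathbb{E}\|\xi\|^2+3L(T+4)\int_0^t\big(1+\mathbb{E}\sup_{r\le s}\|X_r\|^2\big)\,\dd s .
\]
Gr\"onwall's lemma then gives $C(1+\mathbb{E}\|\xi\|^2)$ with $C$ depending only on $T$ and $L$.

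I expect the main obstacle to be the infinite-dimensional technical points rather than the estimates. These are the Doob/BDG maximal inequality for the $Q$-stochastic integral and its pathwise continuity, and the measurability (predictability) of $s\mapsto G(s,Y_s)$ as an $\mathscr L_Q$-valued process. I would handle these by first establishing them for elementary integrands, where the series representation of $W^Q$ reduces everything to finitely many real Brownian motions, and then passing to the $L^2$-limit. Uniform-in-time $L^2$ convergence preserves both continuity and the maximal inequality.
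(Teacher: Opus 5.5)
Your argument is correct, but it is not the paper's route: the paper gives no proof of this theorem. It imports existence and uniqueness from Chow's strong-solution theorem (Theorem 7.4 there). That theorem is formulated for evolution equations with a possibly unbounded generator, via an energy equation with duality pairings; the paper absorbs the bounded operator $-\lambda I$ into $F$. The moment bound comes from Chow's mild-solution theorem (Theorem 6.4). The paper then relies on Remark~\ref{rmk:mild} and the preceding remark on the energy formulation to identify mild, variational and integral solutions when no unbounded operator is present.

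Your Picard/Banach fixed-point argument on $\mathcal{S}_T$ is a self-contained specialization of that theory to bounded coefficients, and it buys the following:
\begin{itemize}
\item it needs only the It\^o isometry \eqref{eq: ito isom} and Doob's $L^2$ inequality for the real submartingale $\|M_t\|_H$;
\item it gives uniqueness directly in the class $L^2(\Omega;C([0,T];H))$ asserted in the statement;
\item it makes the dependence of $C$ on $(T,L)$ explicit, and your constants $2L^2(T+4)$ and $3L(T+4)$ are right.
\end{itemize}
The cited route offers generality the paper never uses, at the cost of the bookkeeping between solution concepts.

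One correction. You cannot obtain predictability of $s\mapsto F(s,Y_s)$ and $s\mapsto G(s,Y_s)$ by approximating with elementary integrands. It requires $F$ and $G$ to be measurable in $t$; Lipschitz continuity in $x$ then gives joint measurability. This hypothesis is implicit in the statement, as in Chow, and it holds in every use in the paper. For example, $G(t,x)=\sigma\mathsf{D}(x-\mathfrak{m}_\alpha(\mu_t))$ with $t\mapsto\mathfrak{m}_\alpha(\mu_t)$ continuous. You should list it as an assumption rather than a step to prove.
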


\begin{remark}[Pathwise uniqueness]\label{rmk:uniq}
Uniqueness in Theorem \ref{thm:solution} is understood in the pathwise sense: if $X$ and $Y$ are two solutions defined on the same filtered probability space, driven by the same $Q$-Wiener process and with the same initial condition $\xi$, then
\[
    \mathbb P\big(X_t = Y_t \text{ for all } t\in[0,T]\big)=1.
\]
\end{remark}

\begin{remark}
The linear growth assumption may be replaced by the weaker condition
\[
    \int_0^T \Big( \|F(t,0)\|_H^2 + \|G(t,0)\|_{\mathscr L_Q}^2 \Big)\,\dd t <\infty.
\]
Indeed, under the global Lipschitz assumption,
\[
    \|F(t,x)\|_H \le \|F(t,x)-F(t,0)\|_H + \|F(t,0)\|_H \le L\|x\|_H + \|F(t,0)\|_H,
\]
and similarly for $G$. Consequently,
\[
    \|F(t,x)\|_H^2 + \|G(t,x)\|_{\mathscr L_Q}^2 \le C\Big( \|x\|_H^2 + \|F(t,0)\|_H^2 + \|G(t,0)\|_{\mathscr L_Q}^2 \Big),
\]
for some constant $C>0$. This weaker formulation is commonly used in the literature. See \cite[page 205]{chow2007stochastic}.
\end{remark}


\subsection{It\^{o} Formula}

Let us begin by recalling first and second order Fr\'echet derivatives. More details can be found for example in \cite[\S 2.2]{bonnans2013perturbation}

\begin{definition}[First Fr\'echet derivative]
Let $H$ be a Hilbert space and let $\Phi:H\to\mathbb{R}$. We say that $\Phi$ is Fr\'echet differentiable at $x\in H$ if there exists a bounded linear functional
\[
D\Phi(x)\in \mathscr L(H,\mathbb{R})
\]
such that
\[
    \lim_{\|h\|_H\to0} \frac{|\Phi(x+h)-\Phi(x)-D\Phi(x)[h]|}{\|h\|_H} = 0.
\]
Since $\mathscr L(H,\mathbb{R}) \simeq H^*$, 
the Riesz representation theorem allows one to identify $D\Phi(x)$ with an element of $H$, still denoted by $D\Phi(x)$. Hence, for every $h\in H$,
\[
    D\Phi(x)[h] = \langle D\Phi(x),h\rangle_H.
\]
Equivalently,
\[
    D\Phi(x)[h] = \lim_{\varepsilon\to0} \frac{\Phi(x+\varepsilon h)-\Phi(x)}{\varepsilon}  = \left. \frac{\dd}{\dd\varepsilon} \Phi(x+\varepsilon h) \right|_{\varepsilon=0},
\]
which corresponds to the directional derivative of $\Phi$ at $x$ in the direction $h$.
\end{definition}

\begin{definition}[Second Fr\'echet derivative]
Let $H$ be a Hilbert space and let $\Phi:H\to\mathbb{R}$ be Fr\'echet differentiable. We say that $\Phi$ is twice Fr\'echet differentiable at $x\in H$ if the map
\[
    D\Phi:H\to \mathscr L(H,\mathbb{R})
\]
is itself Fr\'echet differentiable at $x$. The second derivative of $\Phi$ at $x$, denoted by $D^2\Phi(x)$, 
belongs to $\mathscr L(H,\mathscr L(H,\mathbb{R})).$
Using the canonical identification
\[
    \mathscr L(H,\mathscr L(H,\mathbb{R})) \simeq \mathscr B(H\times H;\mathbb{R}),
\]
the space of bounded bilinear forms\footnote{Equipped with the norm $\|B\| := \sup_{\|h\|_H\le 1,\ \|k\|_H\le 1} |B(h,k)|$.}, the second derivative may equivalently be viewed as a bounded bilinear form on $H\times H$. Thus, for $h,k\in H$,
\[
    D^2\Phi(x)(h,k) = \left. \frac{\partial^2}{\partial s\,\partial t} \Phi(x+sh+tk) \right|_{s=t=0}.
\]

Moreover, there exists a constant $C>0$ such that
\[
    |D^2\Phi(x)(h,k)| \le C\,\|h\|_H\,\|k\|_H, \qquad h,k\in H.
\]
\end{definition}

We shall now define It\^{o} formula in Hilbert spaces adapted to our setting. A more general treatment can be found in \cite[\S 6.4, page 176]{chow2007stochastic}, in particular \cite[Theorem 4.2, page 177]{chow2007stochastic}. See also \cite[Theorem 6.1.1, page 180]{liu2015stochastic}.

\begin{theorem}[It\^{o} formula in Hilbert spaces]
\label{thm:Ito in H}
Let $X$ be the strong solution of
\[
    \dd X_t = F(t,X_t)\,\dd t + G(t,X_t)\,\dd W_t^Q, \qquad X_0=\xi,
\]
and let $\Phi:H\to\mathbb{R}$ be twice Fr\'echet differentiable with bounded derivatives up to order two, i.e. $\Phi \in C_b^2(H)$. 
Then, for all $t\in[0,T]$, the following identity holds $\mathbb{P}$-almost surely:
\begin{align*}
    \Phi(X_t)
    & = \Phi(\xi)
    + \int_0^t D\Phi(X_s)\big[F(s,X_s)\big]\,\dd s \\
    &\qquad + \frac{1}{2} \int_0^t \mathrm{Tr}\Big(
    G(s,X_s)\,Q\,G(s,X_s)^* \, D^2\Phi(X_s)
    \Big)\,\dd s \\
    &\qquad \qquad + \int_0^t D\Phi(X_s)\big[G(s,X_s)\,\dd W_s^Q\big].
\end{align*}
\end{theorem}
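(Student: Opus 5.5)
The statement to prove is the It\^o formula (Theorem \ref{thm:Ito in H}) for $\Phi\in C_b^2(H)$ along the strong solution $X_t=\xi+\int_0^t F\,\dd s+\int_0^t G\,\dd W^Q_s$. The plan is to reduce to the finite-dimensional It\^o formula by Galerkin projection, and then pass to the limit. Fix the eigenbasis $(\mathbf{e}_k)$ of $Q$ and let $\Pi_n$ be the orthogonal projection onto $\mathrm{span}\{\mathbf{e}_1,\dots,\mathbf{e}_n\}$. Set $X^n_t:=\Pi_n X_t$. In coordinates, $X^n_t$ is an $\mathbb{R}^n$-valued It\^o process. Its drift is $\Pi_n F(s,X_s)$, and its martingale part is
\[
\sum_{k\ge1}\sqrt{\lambda_k}\,\Pi_n G(s,X_s)\mathbf{e}_k\,\dd\beta^k_s .
\]
This is an It\^o process driven by countably many real Brownian motions, and it is square integrable because $\sum_k \lambda_k\|G\mathbf{e}_k\|^2=\|G\|_{\mathscr L_Q}^2$. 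I would write the truncated sum over $k\le m$ first, apply the classical multidimensional It\^o formula to $\Phi_n:=\Phi|_{\Pi_nH}$, and let $m\to\infty$ using the It\^o isometry \eqref{eq: ito isom}.

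Next I compute the terms of the finite-dimensional formula for $\Phi(X^n_t)$. The first-order terms are $D\Phi(X^n_s)[\Pi_nF]$ and $D\Phi(X^n_s)[\Pi_n G\,\dd W^Q_s]$. The quadratic-variation term is
\[
\tfrac12\sum_k\lambda_k\, D^2\Phi(X^n_s)\big(\Pi_nG\mathbf{e}_k,\Pi_nG\mathbf{e}_k\big)=\tfrac12\mathrm{Tr}\big(\Pi_nGQG^*\Pi_n\,D^2\Phi(X^n_s)\big).
\]
This rewriting uses that $GQ^{1/2}$ is Hilbert--Schmidt, so the trace is well defined even though $D^2\Phi$ is only bounded.

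Then I pass to the limit $n\to\infty$ term by term, for fixed $t$, in $L^1(\Omega)$ or in probability.
\begin{itemize}
\item $X^n_s\to X_s$ for every $s$, and $\sup_s\|X^n_s-X_s\|\to0$ almost surely by Dini/compactness of continuous paths. Hence $\Phi(X^n_t)\to\Phi(X_t)$ by continuity.
\item For the drift, $D\Phi$ is bounded and continuous and $\Pi_nF\to F$ pointwise. Dominated convergence applies, with dominating function $\|D\Phi\|_\infty\|F(s,X_s)\|$, which is integrable by linear growth and $X\in L^2(\Omega;C([0,T];H))$.
\item For the stochastic integral, the It\^o isometry gives
\[
\mathbb E\int_0^t\big\|(\Pi_nG^*\Pi_nD\Phi(X^n_s)-G^*D\Phi(X_s))\big\|_{Q}^2\,\dd s\to0.
\]
This follows from dominated convergence, with bound $4\|D\Phi\|_\infty^2\|G\|_{\mathscr L_Q}^2$.
\end{itemize}

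The main obstacle is the trace term, because $D^2\Phi$ is merely bounded and continuous, not trace class. I would handle it by writing the trace as the series $\sum_k\lambda_k D^2\Phi(\cdot)(G_n\mathbf{e}_k,G_n\mathbf{e}_k)$ with $G_n=\Pi_nG$. Each summand converges, since $D^2\Phi(X^n_s)\to D^2\Phi(X_s)$ in operator norm by continuity of $D^2\Phi$ and $G_n\mathbf{e}_k\to G\mathbf{e}_k$. Each summand is dominated by $\|D^2\Phi\|_\infty\lambda_k\|G\mathbf{e}_k\|^2$, which is summable with sum $\|D^2\Phi\|_\infty\|G\|_{\mathscr L_Q}^2$. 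Dominated convergence in $k$, and then in $(s,\omega)$, gives convergence to
\[
\tfrac12\int_0^t\mathrm{Tr}\big(GQG^*D^2\Phi(X_s)\big)\,\dd s.
\]
Both sides of the identity are continuous in $t$, and they agree almost surely for each rational $t$. Therefore they agree for all $t$ almost surely, which proves the claim.
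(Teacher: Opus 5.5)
Your argument is correct, but it cannot match the paper's proof, because the paper gives none. The theorem is stated in Appendix~\ref{app:sde in H} as a recalled result, and its proof is delegated to \cite[Theorem 4.2]{chow2007stochastic} (see also \cite{liu2015stochastic}).

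Compared with that citation, your Galerkin reduction is self-contained. It needs only the classical finite-dimensional It\^o formula for the continuous $\Pi_nH$-valued semimartingale $\Pi_nX_t$, together with three ingredients specific to this setting:
\begin{itemize}
\item Dini's theorem on the compact path image, which gives $\sup_{s\le T}\|(I-\Pi_n)X_s\|_H\to0$ almost surely.
\item The identity $\sum_k\lambda_k\|G\mathbf{e}_k\|_H^2=\|G\|_{\mathscr{L}_Q}^2$. It provides the summable majorant for the trace series and shows why boundedness and norm-continuity of $D^2\Phi$ suffice, with no trace-class property of $D^2\Phi$ needed.
\item The bound $\mathbb{E}\int_0^T\|G(s,X_s)\|_{\mathscr{L}_Q}^2\,\dd s<\infty$, coming from linear growth and $X\in L^2(\Omega;C([0,T];H))$. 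It provides the majorant in $(s,\omega)$.
\end{itemize}
What your route buys is transparency: it shows exactly where the $Q$-Hilbert--Schmidt condition and the $C_b^2$ hypothesis enter. What the citation buys is brevity and generality, since it covers general $H$-valued It\^o processes and a broader class of test functionals.

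There is one small slip. Once you let $m\to\infty$ the noise is no longer truncated, so the stochastic integrand of the projected formula is $h\mapsto\langle G^*\Pi_nD\Phi(X^n_s),h\rangle_H$. The extra $\Pi_n$ you placed on the left should not be there. The estimate $\|Q^{1/2}G^*v\|_H\le\|G\|_{\mathscr{L}_Q}\|v\|_H$ still gives the majorant $4\|D\Phi\|_\infty^2\|G\|_{\mathscr{L}_Q}^2$, so the conclusion is unaffected.
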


\begin{remark}
The second-order term in the infinite-dimensional It\^o formula involves the covariance structure of the noise together with the second Fr\'echet derivative of the test functional. Since $GQ^{1/2}\in\mathscr L_2(H)$, 
the operator $GQG^*=(GQ^{1/2})(GQ^{1/2})^*$ 
is trace class. Consequently, the trace term $\mathrm{Tr}\big(GQG^*D^2\Phi(x)\big)$ is well defined. 
More explicitly, if $(\mathbf{e}_k)_{k\in\mathbb N}$ is an orthonormal basis of $H$, then
\[
    \mathrm{Tr}\big(GQG^*D^2\Phi(x)\big) = \sum_{k=1}^\infty D^2\Phi(x)\big( GQ^{1/2}\mathbf{e}_k,\, GQ^{1/2}\mathbf{e}_k \big).
\]

This is the infinite-dimensional analogue of the finite-dimensional contraction
\[
    \sum_{i,j} a_{ij}\,\partial_{ij}\Phi,
\]
where the covariance matrix $a$ is replaced by the covariance operator $GQG^*$ and the Hessian matrix by the second Fr\'echet derivative $D^2\Phi$.

The convergence of the series follows from the boundedness of $D^2\Phi(x)$ together with the Hilbert--Schmidt property $GQ^{1/2}\in\mathscr L_2(H)$.
\end{remark}

\begin{ex}\label{ex:Ito in H}
As an illustration of It\^{o} formula, let us consider the functional
\[
    \Phi:H\to\mathbb{R}, \qquad \Phi(x)=\|x\|_H^2.
\]
Its first Fr\'echet derivative is given by
\[
    D\Phi(x)[h] = 2\langle x,h\rangle_H, \qquad h\in H,
\]
while the second Fr\'echet derivative is
\[
    D^2\Phi(x)(h,k) = 2\langle h,k\rangle_H, \qquad h,k\in H.
\]
Applying the It\^o formula to a strong solution of $\dd X_t = F(t,X_t)\,\dd t + G(t,X_t)\,\dd W_t^Q,$ 
yields
\begin{align*}
\|X_t\|_H^2
    &= \|\xi\|_H^2 + 2\int_0^t \langle X_s,F(s,X_s)\rangle_H\,\dd s + \int_0^t \mathrm{Tr}\big(G(s,X_s)\,Q\,G(s,X_s)^*\big)\,\dd s \\
    &\qquad \qquad \qquad \qquad  + 2\int_0^t \langle X_s,G(s,X_s)\,\dd W_s^Q\rangle_H.
\end{align*}
Indeed, since $D^2\Phi(x)=2I$,  the trace term becomes
\[
    \frac{1}{2} \mathrm{Tr}\big( GQG^*D^2\Phi(x) \big) = \mathrm{Tr}(GQG^*).
\]
This identity is the infinite-dimensional analogue of the classical energy estimate for finite-dimensional stochastic differential equations.
Moreover, using the identity
\[
    \mathrm{Tr}(GQG^*) = \|GQ^{1/2}\|_{\mathrm{HS}}^2 = \|G\|_{\mathscr L_Q}^2,
\]
the previous formula may equivalently be written as
\begin{align*}
    \|X_t\|_H^2
    &= \|\xi\|_H^2 + 2\int_0^t \langle X_s,F(s,X_s)\rangle_H\,\dd s + \int_0^t \|G(s,X_s)\|_{\mathscr L_Q}^2\,\dd s +2\int_0^t\langle X_s,G(s,X_s)\,\dd W_s^Q\rangle_H.
\end{align*}
\end{ex}

\begin{remark}[Kolmogorov and Fokker--Planck formulation]
Let $\mu_t := \mathcal L(X_t)$ denote the law of the solution to the Hilbert-space valued SDE. The It\^o formula implies that for every sufficiently regular test function $\Phi:H\to\mathbb{R}$,
\[
    \frac{\dd}{\dd t}\mathbb{E}[\Phi(X_t)] = \mathbb{E}[\mathcal L \Phi(X_t)],
\]
where $\mathcal L$ denotes the Kolmogorov operator
\[
    \mathcal L \Phi(x) = D\Phi(x)[F(x)] + \frac{1}{2} \mathrm{Tr}\big(G(x)QG(x)^* D^2\Phi(x)\big).
\]
A more general treatment can be found in \cite[Chapter 9, page 271]{chow2007stochastic}.

This identity characterizes the evolution of the law $\mu_t$ in a weak (dual) sense, namely
\[
    \frac{\dd}{\dd t}\int_H \Phi(x)\,\mu_t(\dd x) = \int_H \mathcal L \Phi(x)\,\mu_t(\dd x),
\]
for all admissible test functions $\Phi$. Equivalently, this can be interpreted as the formal Fokker--Planck equation
\[
    \partial_t \mu_t = \mathcal L^* \mu_t,
\]
understood as an evolution equation on probability measures on $H$, dual to the Kolmogorov equation acting on observables. It follows directly from It\^o's formula applied to a (strong) solution of the SDE, provided that the solution possesses sufficient integrability to justify taking expectations, that is
\[
    \mathbb{E}\Big[\sup_{t\in[0,T]}\|X_t\|_H^2\Big] + \mathbb{E}\int_0^T \|F(t,X_t)\|_H\,\dd t + \mathbb{E}\int_0^T \|G(t,X_t)\|_{\mathscr L_Q}^2\,\dd t < \infty.
\]
We refer to \cite[Chapter 10, page 403]{bogachev2022fokker} and references therein for a detailed analysis. 
\end{remark}

\subsection{Comparison with the Finite-Dimensional Setting}

The transition from the finite-dimensional It\^o formula in $\mathbb{R}^d$ to its infinite-dimensional counterpart on a separable Hilbert space $H$ may be understood as a structural extension in which finite sums over coordinates are replaced by traces of operators. The following comparison highlights the correspondence between the classical Euclidean framework and the Hilbert-space setting considered in this work.

Let $(W_t)_{t\ge0}$ be a standard Brownian motion in $\mathbb{R}^d$, and let $(W_t^Q)_{t\ge0}$ be a $Q$-Wiener process on $H$. For a sufficiently smooth functional $\Phi$, the following analogies hold:

\begin{table}[H]
\centering
\begin{tabular}{@{}lll@{}}
\toprule
\textbf{Feature} & \textbf{Finite Dimensional ($\mathbb{R}^d$)} & \textbf{Infinite Dimensional ($H$)} \\ \midrule
State equation &
$\dd X_t = b_t\,\dd t + \sigma_t\,\dd W_t$
&
$\dd X_t = A_t\,\dd t + B_t\,\dd W_t^Q$
\\[0.3em]

First derivative &
$\nabla \Phi(x)\in\mathbb{R}^d$
&
$D\Phi(x)\in H$
\\[0.3em]

Second derivative &
$D^2\Phi(x)\in\mathbb{R}^{d\times d}$
&
$D^2\Phi(x)\in\mathscr B(H\times H;\mathbb R)$
\\[0.3em]

Diffusion coefficient \; &
Matrix $\sigma_t$
&
Operator $B_t \in \mathscr{L}_Q(H)$
\\[0.3em]

Noise covariance &
Identity matrix $I$
&
Trace-class operator $Q$
\\ \bottomrule
\end{tabular}
\end{table}

The corresponding It\^o formulas take the form

\begin{itemize}
\item \textbf{Finite-dimensional setting $(\mathbb{R}^d)$:}
\[
    \dd \Phi(X_t) = \langle \nabla \Phi(X_t), b_t\rangle\,\dd t + \langle \nabla \Phi(X_t), \sigma_t\,\dd W_t\rangle + \frac{1}{2} \sum_{i,j=1}^d (\sigma_{t} \sigma_{t}^\top)_{ij} \,\partial_{ij}\Phi(X_t)\,\dd t.
\]

\item \textbf{Infinite-dimensional setting $(H)$:}
\[
    \dd \Phi(X_t) = D\Phi(X_t)[A_t]\,\dd t + D\Phi(X_t)[B_t\,\dd W_t^Q] + \frac{1}{2} \mathrm{Tr} \big( B_tQB_t^*D^2\Phi(X_t) \big)\,\dd t.
\]
\end{itemize}

The key structural difference lies in the second-order correction term. In finite dimensions, the Hessian contribution is expressed through a finite double sum over coordinates, whereas in Hilbert spaces it is represented through a trace pairing between the covariance operator and the second Fr\'echet derivative.

Indeed, if $(\mathbf{e}_k)_{k\in\mathbb N}$ is an orthonormal basis of $H$, then
\[
    \mathrm{Tr}\big(BQB^*D^2\Phi(x)\big) = \sum_{k=1}^\infty D^2\Phi(x)\big( BQ^{1/2}\mathbf{e}_k,\, BQ^{1/2}\mathbf{e}_k \big),
\]
which is the infinite-dimensional analogue of the finite-dimensional contraction
\[
    \sum_{i,j=1}^d (\sigma\sigma^\top)_{ij}\, \partial_{ij}\Phi.
\]

In the Euclidean setting, the covariance operator is typically the identity, corresponding to isotropic white noise. In infinite dimensions, however, the identity operator is no longer trace class, and one must instead introduce a positive trace-class covariance operator $Q$. The resulting $Q$-Wiener process therefore represents a spatially correlated (colored) noise whose covariance structure ensures that the stochastic process remains $H$-valued.

This illustrates the difference between \textit{white} and \textit{colored} noise in infinite dimensions:
\begin{itemize}
    \item In finite dimensions (or formally with $Q = I$), all directions have the same variance and the noise is isotropic (white noise).
    \item In infinite dimensions, one must introduce a trace-class covariance operator $Q$, which assigns different weights $\lambda_i$ to different directions, leading to anisotropic (colored) noise and ensuring that the process is $H$-valued.
\end{itemize}

This correspondence becomes particularly transparent for the functional $\Phi(x)=\|x\|_H^2$, for which $D^2\Phi(x)=2I$.  
In finite dimensions, the It\^o correction term becomes
\[
    \mathrm{Tr}(\sigma\sigma^\top)\,\dd t = \|\sigma\|_F^2\,\dd t,
\]
where $\|\cdot\|_F$ is the Frobenius norm. In the Hilbert-space setting, this generalizes to
\[
    \mathrm{Tr}(BQB^*)\,\dd t = \|BQ^{1/2}\|_{\mathrm{HS}}^2\,\dd t,
\]
which is  the Hilbert--Schmidt norm associated with the covariance operator $Q$.

Consequently, the finite-dimensional It\^o formula may be viewed as a particular case of the Hilbert-space framework in which the underlying basis is finite and the covariance operator reduces to the identity.

%
%

\section{Elements from nonsmooth analysis}\label{app:nonsmooth}

In this section, we recall some definitions and results from nonsmooth analysis which are relevant to the present work, and more generally to the study of functions defined on infinite dimensional Hilbert spaces. 
These are borrowed from \cite[Chapter 2]{clarke1990optimization}.

Let us consider $X$ a Banach space whose elements are denoted by $x$ with a norm $\|x\|$. A function $f:X\to \mathbb{R}$ is said to be Lipschitz if there exists some nonnegative scalar $K$ such that 
\begin{equation*}
    |f(x) - f(y)| \leq K\, \|x-y\| \quad \forall\, x,y\in X.
\end{equation*}
We say that $f$ is Lipschitz (of rank $K$) \textit{near} $x$ if, for some $\varepsilon>0$, $f$ is Lipschitz within an $\varepsilon$-neighbourhood of $x$, i.e. on the set $x+\varepsilon\,B$ where $B$ is the open unit ball in $X$.

\begin{definition}
Let $f$ be Lipschitz near a given point $x$, and let $v$ be any other element in $X$. The generalized directional derivative of $f$ at $x$ in the direction $v$, denoted $f^{\circ}(x;v)$ is defined as follows
\begin{equation*}
    f^{\circ}(x;v) = \limsup\limits_{y\to x,\, t\downarrow 0} \frac{f(y + t\,v) - f(y)}{t}
\end{equation*}
where $y$ is an element in $X$ and $t$ is a positive scalar
\end{definition}

\begin{proposition}
Let $f$ be Lipschitz of rank $K$ near $x$. Then the function $v\mapsto f^{\circ}(x;v)$ is finite, positively homogeneous,  subadditive on $X$, and satisfies
\begin{equation*}
    |f^{\circ}(x;v)| \leq K\,\|v\|.
\end{equation*}
It is moreover Lipschitz of rank $K$ on $X$.
\end{proposition}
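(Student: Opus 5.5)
The proof reduces to a Lipschitz estimate for the sublinear function $v\mapsto f^{\circ}(x;v)$; the other properties follow directly from the definition. Fix $\varepsilon>0$ such that $f$ is Lipschitz of rank $K$ on $x+\varepsilon B$.

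\emph{Finiteness and the bound.} For $y$ close to $x$ and $t>0$ small, both $y$ and $y+tv$ lie in $x+\varepsilon B$. Hence
\[
\big|f(y+tv)-f(y)\big|/t\le K\|v\|.
\]
Taking the $\limsup$ as $y\to x$ and $t\downarrow0$ gives $|f^{\circ}(x;v)|\le K\|v\|$, and in particular $f^{\circ}(x;v)$ is finite.

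\emph{Positive homogeneity.} Take $\lambda>0$. The substitution $s=\lambda t$ gives
\[
\frac{f(y+t\lambda v)-f(y)}{t}=\lambda\,\frac{f(y+s v)-f(y)}{s},
\]
and $s\downarrow0$ exactly when $t\downarrow0$. Therefore $f^{\circ}(x;\lambda v)=\lambda f^{\circ}(x;v)$.

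\emph{Subadditivity.} Split the difference quotient as
\[
\frac{f(y+tv+tw)-f(y)}{t}=\frac{f\big((y+tw)+tv\big)-f(y+tw)}{t}+\frac{f(y+tw)-f(y)}{t}.
\]
As $y\to x$ and $t\downarrow0$, the point $y':=y+tw$ also tends to $x$. The $\limsup$ of a sum is at most the sum of the $\limsups$, and the first term is a difference quotient based at $y'$. This yields
\[
f^{\circ}(x;v+w)\le f^{\circ}(x;v)+f^{\circ}(x;w).
\]
This is the only step that needs care: one must confirm that the $\limsup$ over the shifted base point $y'$ is still dominated by $f^{\circ}(x;v)$, which holds because $y'\to x$.

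\emph{Lipschitz of rank $K$ in $v$.} For $v,w\in X$, apply the Lipschitz property to the points $y+tv$ and $y+tw$, both of which lie in $x+\varepsilon B$ for $y$ near $x$ and $t$ small:
\[
f(y+tv)-f(y)\le f(y+tw)-f(y)+K t\|v-w\|.
\]
Dividing by $t$ and taking the $\limsup$ gives $f^{\circ}(x;v)\le f^{\circ}(x;w)+K\|v-w\|$. Exchanging the roles of $v$ and $w$ gives the reverse inequality, so $v\mapsto f^{\circ}(x;v)$ is Lipschitz of rank $K$ on $X$.
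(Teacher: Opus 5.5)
Your proof is correct. The paper does not prove this proposition itself but quotes it from Clarke's \emph{Optimization and Nonsmooth Analysis}, Chapter 2, and your argument is exactly the standard proof given there: a uniform bound on the difference quotient, the rescaling $s=\lambda t$, treating $y+tw$ as a dummy base point tending to $x$, and the comparison $f(y+tv)-f(y)\le f(y+tw)-f(y)+Kt\|v-w\|$.
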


\begin{definition}\label{def: general grad}
Let $f$ be Lipschitz near a given point $x$. The generalized gradient of $f$ at $x$, denoted by $\partial f(x)$, is a subset of $X^{*}$ (the dual space of $X$) of continuous linear functionals on $X$, given by
\begin{equation*}
    \partial f(x) = \{\xi\in X^{*}\,:\, f^{\circ}(x;v) \geq \langle \xi,v \rangle \, \text{ for all } x\in X\}
\end{equation*}
where $\langle\,\cdot,\cdot\,\rangle$ is the duality product on $X$. We denote by $\|\xi\|_{*}$ the norm in $X^{*}$ given by
\begin{equation*}
    \|\xi\|_{*} := \sup\{\langle \xi, x\rangle\,:\, v\in X, \|v\|\leq 1\}.
\end{equation*}
\end{definition}

\begin{proposition}
Let $f$ be Lipschitz of rank $K$ near $x$. Then
\begin{enumerate}
    \item $\partial f(x)$ is a nonempty, convex, weak-$*$ compact subset of $X^{*}$, and $\|\xi\|_{*}\leq K$ for every $\xi \in \partial f(x)$.
    \item For every $v\in X$, one has
    \begin{equation*}
        f^{\circ}(x;v) = \max\{\langle \xi,v\rangle\,:\, \xi \in \partial f(x)\}.
    \end{equation*}
    \item $\xi \in \partial f(x)$ if and only if $f^{\circ}(x;v)\geq \langle \xi,v\rangle$ for all $v\in X$.
    \item Let $x_{i}$ and $\xi_i$ be sequences in $X$ and $X^{*}$ such that $\xi_i \in \partial f(x_i)$. Suppose that $x_i$ converges to $x$, and that $\xi$ is a cluster point of $\xi_i$ in the weak-$*$ topology. Then one has $\xi\in \partial  f(x)$. That is, the multifunction $\partial f$ is weak-$*$ closed.
\end{enumerate}
\end{proposition}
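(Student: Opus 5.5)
The plan is to derive all four items from the previous proposition (finiteness, positive homogeneity, subadditivity of $v\mapsto f^{\circ}(x;v)$, and $|f^{\circ}(x;v)|\le K\|v\|$), combined with Hahn--Banach and Banach--Alaoglu. Item (3) is just a restatement of Definition \ref{def: general grad}, so I record it first and use it freely.

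\textbf{Item (1), part one.} Since $f^{\circ}(x;\cdot)$ is sublinear and finite on $X$, the analytic Hahn--Banach theorem applies. For a fixed $v_0\in X$, extend the linear functional $t v_0\mapsto t\, f^{\circ}(x;v_0)$ from $\mathrm{span}\{v_0\}$; it is dominated by $f^{\circ}(x;\cdot)$ there, by positive homogeneity for $t\ge 0$ and by $0=f^{\circ}(x;0)\le f^{\circ}(x;v_0)+f^{\circ}(x;-v_0)$ for $t<0$. This gives a linear $\xi$ with $\xi\le f^{\circ}(x;\cdot)$ and $\xi(v_0)=f^{\circ}(x;v_0)$. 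Continuity follows from the two bounds $\xi(v)\le K\|v\|$ and $-\xi(v)=\xi(-v)\le K\|v\|$. Hence $\xi\in\partial f(x)$, so $\partial f(x)\neq\emptyset$ and $\|\xi\|_*\le K$; the same bound holds for every element of $\partial f(x)$ by the identical argument.

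\textbf{Item (2).} The Hahn--Banach step shows the supremum $\sup\{\langle \xi,v_0\rangle : \xi\in\partial f(x)\}$ is attained and equals $f^{\circ}(x;v_0)$, which is exactly item (2).

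\textbf{Item (1), part two.} Convexity and weak-$*$ closedness are immediate, because $\partial f(x)=\bigcap_{v\in X}\{\xi:\langle\xi,v\rangle\le f^{\circ}(x;v)\}$ is an intersection of weak-$*$ closed half-spaces. Being contained in the ball of radius $K$, it is weak-$*$ compact by Banach--Alaoglu.

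\textbf{Item (4): upper semicontinuity.} The key step, which I expect to be the only real obstacle, is
\[
\limsup_{i\to\infty} f^{\circ}(x_i;v)\le f^{\circ}(x;v)\qquad\text{whenever } x_i\to x.
\]
For $i$ large, $x_i$ lies in the neighbourhood where $f$ is Lipschitz, so $f^{\circ}(x_i;v)$ is finite. By the definition of $f^{\circ}$ as a limsup, choose $y_i\in X$ and $t_i>0$ with $\|y_i-x_i\|+t_i<1/i$ and
\[
\frac{f(y_i+t_iv)-f(y_i)}{t_i}\ge f^{\circ}(x_i;v)-\frac1i .
\]
Then $y_i\to x$ and $t_i\downarrow 0$, so the limsup of the left-hand side is at most $f^{\circ}(x;v)$. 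This is a diagonal argument.

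\textbf{Item (4): conclusion.} Fix $v$ and pass to a subnet with $\xi_{i_\beta}\to\xi$ weak-$*$. From $\langle\xi_{i_\beta},v\rangle\le f^{\circ}(x_{i_\beta};v)$ and the upper semicontinuity just established (which holds along subnets as well), we get $\langle\xi,v\rangle\le f^{\circ}(x;v)$. Since $v$ was arbitrary, item (3) gives $\xi\in\partial f(x)$.
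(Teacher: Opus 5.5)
Your proof is correct. The paper gives no proof of its own here (it recalls the result from \cite[Chapter 2]{clarke1990optimization}), and your argument is essentially the standard one given there. It applies Hahn--Banach to the finite sublinear functional $f^{\circ}(x;\cdot)$, bounded by $K\|\cdot\|$, to get nonemptiness, the norm bound and the max formula. It uses Alaoglu for weak-$*$ compactness, and the diagonal choice of $(y_i,t_i)$ to get the upper semicontinuity of $f^{\circ}(\cdot\,;v)$ that yields closedness of $\partial f$.

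The one cosmetic difference is in item (4). Instead of a subnet, Clarke fixes $v$ first and extracts a subsequence with $\langle \xi_i,v\rangle\to\langle\xi,v\rangle$. This is possible because $\{\eta : |\langle \eta-\xi,v\rangle|<\varepsilon\}$ is a weak-$*$ neighbourhood of $\xi$.
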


\begin{theorem}[Lebourg Mean-Value Theorem] \label{thm:MV}
Let $x$ and $y$ be points in $X$, and suppose that $f$ is Lipschitz on an open set containing the line segment $[x,y]$. Then there exists a point $u$ in $(x,y)$ such that
\begin{equation*}
    f(y) - f(x) \in \langle \partial f(u), y-x \rangle.
\end{equation*}
\end{theorem}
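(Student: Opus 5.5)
The plan is to reduce the statement to the one-dimensional case through the line segment $[x,y]$. This is Clarke's classical proof of the Lebourg theorem, and it uses only the properties of $f^{\circ}$ and $\partial f$ recalled just above.

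Fix $x,y\in X$. Define the auxiliary scalar function
\begin{equation*}
    \theta(t) := f\big(x + t(y-x)\big) + t\,\big(f(x) - f(y)\big), \qquad t\in[0,1].
\end{equation*}
Since $f$ is Lipschitz on an open set containing $[x,y]$, $\theta$ is continuous on $[0,1]$. By construction $\theta(0)=\theta(1)=f(x)$. Hence $\theta$ attains either a maximum or a minimum at some interior point $t_0\in(0,1)$; if $\theta$ is constant, any interior point works. Set $u := x + t_0(y-x)\in(x,y)$.

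The key step is a one-dimensional Fermat rule together with a chain-rule inclusion. First, at an interior local extremum of a Lipschitz function of one variable one has $0\in\partial\theta(t_0)$. To see this in the minimum case, note that $\theta^{\circ}(t_0;s)\geq \liminf_{\tau\downarrow0}(\theta(t_0+\tau s)-\theta(t_0))/\tau \geq 0$ for $s=\pm1$, and then use the characterization (3) of the generalized gradient. The maximum case reduces to the minimum case via $\partial(-\theta)=-\partial\theta$, which follows from $(-\theta)^{\circ}(t;s)=\theta^{\circ}(t;-s)$.

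Second, I would show the inclusion
\begin{equation*}
    \partial\theta(t_0) \subset \langle \partial f(u), y-x\rangle + \big(f(x)-f(y)\big).
\end{equation*}
The smooth linear term contributes exactly $f(x)-f(y)$, since the generalized directional derivative is additive with respect to a strictly differentiable summand. For the term $g(t)=f(x+t(y-x))$, the definition of the $\limsup$ gives, for $s\in\mathbb{R}$,
\begin{equation*}
    g^{\circ}(t_0;s) \leq f^{\circ}\big(u; s(y-x)\big) = \max\{\langle\xi, s(y-x)\rangle : \xi\in\partial f(u)\},
\end{equation*}
where the equality is item (2) of the preceding proposition. The right-hand side is the support function of the set $\{\langle\xi,y-x\rangle:\xi\in\partial f(u)\}\subset\mathbb{R}$. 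This set is compact and convex, because $\partial f(u)$ is weak-$*$ compact and convex and $\xi\mapsto\langle\xi,y-x\rangle$ is weak-$*$ continuous and linear. Comparing support functions of convex compact subsets of $\mathbb{R}$ then yields $\partial g(t_0)\subset\langle\partial f(u),y-x\rangle$.

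Combining the two steps gives $0\in\langle\partial f(u),y-x\rangle + f(x)-f(y)$, which is exactly $f(y)-f(x)\in\langle\partial f(u),y-x\rangle$.

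The main obstacle I expect is the chain-rule inclusion, specifically the sum rule for $\partial\theta$ and the passage from the inequality between support functions to the set inclusion. This requires compactness and convexity of the image set. I will justify that carefully using item (1) of the preceding proposition.
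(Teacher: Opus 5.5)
Your argument is correct. It is exactly Clarke's classical proof: the auxiliary function $\theta$, the Fermat rule $0\in\partial\theta(t_0)$, and the inclusion $\partial g(t_0)\subset\langle\partial f(u),y-x\rangle$ obtained by comparing $g^{\circ}(t_0;s)$ with $f^{\circ}(u;s(y-x))$ for $s=\pm1$. The paper does not prove this statement itself; it only quotes the result from \cite{clarke1990optimization}. The steps you flag as delicate all go through as you describe: the exact splitting of $\theta^{\circ}$ off the linear summand, the closed-interval structure of $\langle\partial f(u),y-x\rangle$ from weak-$*$ compactness and convexity, and the reduction of the maximum case via $\partial(-\theta)=-\partial\theta$.
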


\begin{proposition}
Let $U$ be an open convex subset of $X$. 
When $f$ is convex on $U$ and Lipschitz near $x$, then $\partial f(x)$ coincides with the subdifferential at $x$ in the sense of convex analysis, and $f^{\circ}(x;v)$ coincides with the directional derivative $f'(x;v)$ for each $v$.
\end{proposition}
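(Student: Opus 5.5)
Throughout, $f$ is convex on the open convex set $U$ and Lipschitz of rank $K$ near $x \in U$. The plan is to prove first that $f^{\circ}(x;v) = f'(x;v)$ for every $v \in X$. The identification of $\partial f(x)$ with the convex subdifferential then follows from the definitions.

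\textbf{Step 1: existence of $f'(x;v)$ and the easy inequality.} By convexity, the difference quotient $t \mapsto \frac{f(x+tv)-f(x)}{t}$ is nondecreasing in $t>0$. It is bounded below by $-K\|v\|$ thanks to the Lipschitz bound near $x$. Hence the one-sided limit $f'(x;v)$ exists and equals the infimum over $t>0$. Taking $y = x$ in the $\limsup$ defining $f^{\circ}$ gives $f'(x;v) \le f^{\circ}(x;v)$ immediately.

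\textbf{Step 2: the reverse inequality.} This is the main point. Fix $\delta>0$. By Step 1 we may choose $\varepsilon>0$ such that
\begin{equation*}
\frac{f(x+\varepsilon v)-f(x)}{\varepsilon} \le f'(x;v)+\delta .
\end{equation*}
Now take $y$ with $\|y-x\|<\varepsilon\delta$ and $0<t<\varepsilon$. Monotonicity of convex difference quotients at the base point $y$ gives
\begin{equation*}
\frac{f(y+tv)-f(y)}{t} \le \frac{f(y+\varepsilon v)-f(y)}{\varepsilon}.
\end{equation*}
Shrink $\varepsilon$ if needed so that $y$, $x+\varepsilon v$ and $y+\varepsilon v$ all lie in the region where $f$ is $K$-Lipschitz. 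Then the Lipschitz property lets us replace $y$ by $x$ in both terms of the right-hand side, at a cost of at most $2K\|y-x\|/\varepsilon \le 2K\delta$. This yields
\begin{equation*}
\frac{f(y+tv)-f(y)}{t} \le f'(x;v)+\delta+2K\delta .
\end{equation*}
Taking the $\limsup$ as $y\to x$, $t\downarrow 0$ gives $f^{\circ}(x;v)\le f'(x;v)+(1+2K)\delta$. Since $\delta>0$ is arbitrary, $f^{\circ}(x;v)\le f'(x;v)$.

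\textbf{Step 3: the subdifferentials.} Recall the characterization of the convex subdifferential: $\xi$ belongs to it if and only if $\langle \xi, v\rangle \le f'(x;v)$ for all $v$.
\begin{itemize}
\item One direction follows from $f(x+tv)-f(x)\ge t\langle\xi,v\rangle$ after dividing by $t$ and letting $t\downarrow 0$.
\item For the other direction, use $f(x+v)-f(x) \ge \frac{f(x+tv)-f(x)}{t}$ for $t\in(0,1]$, which is again monotonicity of the quotient. Letting $t \downarrow 0$ gives $f(x+v)-f(x)\ge f'(x;v)\ge\langle\xi,v\rangle$ for all $v$ with $x+v\in U$. Convexity on $U$ makes this the subgradient inequality there.
\end{itemize}
By item (3) of the preceding proposition, $\xi\in\partial f(x)$ if and only if $\langle\xi,v\rangle\le f^{\circ}(x;v)=f'(x;v)$ for all $v$. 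Combining this with Step 2 identifies the two sets.

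The main obstacle is the uniformity argument in Step 2. The choice of $\varepsilon$ must be made before $y$ is allowed to vary, and one must check that all the perturbed points stay inside the Lipschitz neighbourhood of $x$.
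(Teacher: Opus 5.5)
The paper gives no proof of this proposition; it quotes it from Clarke's Chapter 2. Your argument is correct and is essentially Clarke's standard proof: monotone convex difference quotients, base points $y$ restricted to a ball of radius $\varepsilon\delta$, and the $2K\delta$ Lipschitz correction. Re-shrinking $\varepsilon$ is harmless because $\varepsilon\mapsto \frac{f(x+\varepsilon v)-f(x)}{\varepsilon}$ is nondecreasing.

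The only divergence is in identifying the two sets. Clarke invokes the convex-analysis fact that $f'(x;\cdot)$ is the support function of the convex subdifferential and compares support functions. You instead check directly that both sets equal $\{\xi : \langle \xi, v\rangle \le f'(x;v)\ \forall v\}$, which is more elementary and avoids any Hahn--Banach step.
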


Let us now consider $F$ that maps $X$ to another Banach space $Y$. The usual one-sided directional derivative of $F$ at $x$ in the direction $v$ is 
\begin{equation*}
    F'x;v) := \lim\limits_{t\downarrow 0} \frac{F(x+tv) - F(x)}{t}
\end{equation*}
when this limits exists. 

\begin{definition}
$F$ is said to admit a Gateaux derivative at $x$, an element in the space $\mathscr{L}(X,Y)$ of continuous linear functionals from $X$ to $Y$, denoted $DF(x)$, provided that for every $v$ in $X$, $F'(x;v)$ exists and equals $\langle DF(x),v\rangle$. Equivalently, the difference quotient converges for each $v$
\begin{equation*}
    \lim\limits_{t\downarrow 0} \frac{F(x+tv) - F(x)}{t} = \langle DF(x), v\rangle,
\end{equation*}
and the convergence is uniform with respect to $v$ in finite sets. 
\end{definition}

\begin{definition}\label{def: cont diff}
We say that $F$ is continuously differentiable at $x$ when, on a neighborhood of $x$, the Gateaux derivative $Df(x)$ exists and is continuous as a mapping from $X$ to $\mathscr{L}(X,Y)$ (with its operator norm topology).
\end{definition}

\begin{definition}\label{def: stric deriv}
We say that $F$ admits a strict derivative at $x$, an element of $\mathscr{L}(X,Y)$ denoted by $D_s F(x)$, provided that for each $v$, the following holds
\begin{equation*}
    \lim\limits_{x'\to x,\, t\downarrow 0} \frac{F(x'+tv) - F(x')}{t} = \langle D_s F(x),v\rangle
\end{equation*}
and provided the convergence is uniform for $v$ in compact sets. (This last condition is automatic if $F$ is Lipschitz near $x$.) In this case, we also say that $F$ is strictly differentiable at $x$.
\end{definition}

\begin{proposition}\label{prop:diff}
If $F$ is continuously differentiable at $x$, then $F$ is strictly differentiable at $x$ and hence is Lipschitz near $x$.
\end{proposition}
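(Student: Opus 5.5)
The statement is Proposition \ref{prop:diff}: if $F:X\to Y$ is continuously differentiable at $x$, then $F$ is strictly differentiable at $x$ with $D_sF(x)=DF(x)$, and $F$ is Lipschitz near $x$. The plan is to reduce everything to a mean-value estimate along line segments, exploiting that the Gateaux derivative $DF$ exists on a neighborhood $x+\varepsilon B$ and is norm-continuous there.

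\textbf{Step 1 (mean-value inequality).} For $x'$ near $x$ and small $t>0$, the segment $[x',x'+tv]$ lies in $x+\varepsilon B$. I will show that for $u,w$ in this ball,
\begin{equation*}
\|F(w)-F(u)-DF(x)(w-u)\|_Y\le \sup_{z\in[u,w]}\|DF(z)-DF(x)\|_{\mathscr{L}(X,Y)}\,\|w-u\|.
\end{equation*}
To avoid vector-valued integration, I will scalarize. Fix $y^*\in Y^*$ with $\|y^*\|\le 1$ and set
\begin{equation*}
g(s)=\langle y^*,\,F(u+s(w-u))-s\,DF(x)(w-u)\rangle, \qquad s\in[0,1].
\end{equation*}
The Gateaux differentiability of $F$ makes $g$ differentiable on $[0,1]$, with
\begin{equation*}
g'(s)=\langle y^*,(DF(u+s(w-u))-DF(x))(w-u)\rangle.
\end{equation*}
The scalar mean-value theorem then bounds $|g(1)-g(0)|$, and taking the supremum over $y^*$ via Hahn--Banach yields the inequality.

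\textbf{Step 2 (Lipschitz property).} Continuity of $DF$ at $x$ in operator norm implies $\|DF(z)\|\le K$ on some smaller ball around $x$. The same scalarized argument with $DF(x)$ replaced by $0$ gives $\|F(w)-F(u)\|\le K\|w-u\|$ on that ball, so $F$ is Lipschitz near $x$.

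\textbf{Step 3 (strict derivative).} Apply Step 1 with $u=x'$ and $w=x'+tv$, then divide by $t$:
\begin{equation*}
\Big\|\tfrac{F(x'+tv)-F(x')}{t}-DF(x)v\Big\|\le \sup_{z\in[x',x'+tv]}\|DF(z)-DF(x)\|\,\|v\|.
\end{equation*}
As $x'\to x$ and $t\downarrow 0$, the whole segment shrinks to $x$, uniformly for $\|v\|\le M$. By continuity of $DF$, the right-hand side tends to zero uniformly on bounded sets of $v$, hence uniformly on compact sets. This is exactly the definition of $D_sF(x)=DF(x)$.

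\textbf{Main obstacle.} The delicate point is Step 1: justifying differentiability of $g$ from mere Gateaux differentiability, which only gives one-sided directional limits. Since the directional derivative $\langle DF(z),h\rangle$ is linear in $h$, the left derivative in direction $w-u$ equals the right derivative, because the direction $-(w-u)$ gives the negated value. This makes $g$ genuinely differentiable, so the scalar mean-value theorem applies.
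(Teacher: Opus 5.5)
Your proof is correct. The paper gives no argument for this statement: it is one of the facts recalled without proof from Clarke's book in Appendix~\ref{app:nonsmooth}, so there is no internal proof to match.

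Your scalarized mean-value inequality $\|F(w)-F(u)-DF(x)(w-u)\|\le \sup_{z\in[u,w]}\|DF(z)-DF(x)\|\,\|w-u\|$ is the standard tool. You also treat the one delicate point properly. With the paper's definition, the Gateaux derivative only controls one-sided limits $F'(z;v)$. It is the linearity of $DF(z)$, via $F'(z;-d)=-DF(z)d$, that makes $g$ differentiable from both sides. Because the ball is open, the segment can be slightly extended inside it, which gives continuity of $g$ at the endpoints. Note that you only use existence of $DF$ near $x$ and its continuity at $x$.

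The real difference from the classical presentation is the order in which the two conclusions are obtained. The wording ``and hence is Lipschitz near $x$'' reflects the route where local Lipschitzness is deduced from strict differentiability alone. The paper quotes that implication in the next proposition, for real-valued $f$. It is proved by contradiction: if $F$ were not Lipschitz near $x$, one constructs $x_i'\to x$, $t_i\downarrow 0$, $v_i\to 0$ with $\|F(x_i'+t_iv_i)-F(x_i')\|/t_i\to\infty$. This contradicts uniform convergence of the difference quotients on the compact set $\{v_i\}\cup\{0\}$.

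You instead obtain the Lipschitz bound directly from local boundedness of $DF$ and the same mean-value inequality, with an explicit constant such as $\|DF(x)\|+\varepsilon$. Your route is more elementary and self-contained. The classical route gives the more general fact that any strictly differentiable map is Lipschitz near the point, with no derivative required away from $x$.
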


\begin{proposition}
If $f$ is strictly differentiable at $x$, then $f$ is Lipschitz near $x$ and $\partial f(x) = \{D_s f(x)\}$. Conversely, if $f$ is Lipschitz near $x$ and $\partial f(x)$ reduces to a singleton $\xi$, then $D_s f(x) = \xi$.
\end{proposition}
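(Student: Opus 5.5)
The statement has two directions. In the forward direction the main task is to extract local Lipschitz continuity from strict differentiability; identifying $\partial f(x)$ is then formal. In the converse direction the task is to show that the difference quotients actually converge, not merely have a bounded $\limsup$.

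\textbf{Forward direction, Lipschitz near $x$.} I would argue by contradiction. If $f$ is not Lipschitz near $x$, then for every $i$ there are points $x_i, x_i'$ with $\|x_i-x\|,\|x_i'-x\|<1/i$, $x_i\neq x_i'$, and
\begin{equation*}
|f(x_i')-f(x_i)| > i\,\|x_i'-x_i\|.
\end{equation*}
A quotient with a fixed direction is not available, so I would use the rescaling trick. Set $t_i := \sqrt{i}\,\|x_i'-x_i\|$ and $v_i := (x_i'-x_i)/t_i$. Then $t_i\downarrow 0$, $x_i\to x$, and $\|v_i\| = 1/\sqrt{i}\to 0$. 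Hence $\{v_i\}\cup\{0\}$ is a compact set. The uniformity on compact sets in Definition \ref{def: stric deriv} then gives
\begin{equation*}
\frac{f(x_i+t_iv_i)-f(x_i)}{t_i} - \langle D_sf(x),v_i\rangle \to 0,
\end{equation*}
and since $\langle D_sf(x),v_i\rangle\to 0$, the quotient itself tends to $0$. Its absolute value, however, exceeds $i\|x_i'-x_i\|/t_i=\sqrt{i}\to\infty$, which is a contradiction. I expect this step to be the main obstacle, since in infinite dimensions the unit sphere is not compact. The choice of a vanishing direction sequence is exactly what lets the compact-uniformity hypothesis be used.

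\textbf{Forward direction, identification of $\partial f(x)$.} Once $f$ is Lipschitz near $x$, the $\limsup$ defining $f^\circ(x;v)$ is a genuine limit, so $f^\circ(x;v)=\langle D_sf(x),v\rangle$. By Definition \ref{def: general grad}, $\xi\in\partial f(x)$ if and only if $\langle\xi,v\rangle\le\langle D_sf(x),v\rangle$ for all $v$. Replacing $v$ by $-v$ forces equality, so $\partial f(x)=\{D_sf(x)\}$.

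\textbf{Converse direction.} Assume $\partial f(x)=\{\xi\}$. By the max formula for $f^\circ$ (item (2) of the proposition above), $f^\circ(x;v)=\langle\xi,v\rangle$ for every $v$. For the $\liminf$, I would substitute $y'=y+tv$, which still tends to $x$. This gives
\begin{equation*}
\liminf_{y\to x,\,t\downarrow0}\frac{f(y+tv)-f(y)}{t} = -f^\circ(x;-v) = \langle\xi,v\rangle.
\end{equation*}
So the limit exists and equals $\langle\xi,v\rangle$ for each fixed $v$. To upgrade this to uniformity on a compact set $K$, I would use that the quotients $v\mapsto (f(y+tv)-f(y))/t$ are $L$-Lipschitz in $v$, uniformly in $y$ near $x$ and small $t$, where $L$ is the local Lipschitz rank. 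Covering $K$ by a finite $\varepsilon$-net, the pointwise convergence at the net points combined with this equicontinuity yields uniform convergence on $K$. Therefore $D_sf(x)=\xi$.
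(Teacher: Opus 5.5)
Your proof is correct. The paper gives no argument of its own here; it only quotes this proposition from Clarke's Chapter~2, which is the source of all of Appendix~\ref{app:nonsmooth}. Your route is essentially the standard argument from that source: the $\sqrt{i}$-rescaling with shrinking directions $v_i\to 0$, so that the compact-uniformity clause applies on $\{v_i\}\cup\{0\}$; the change of variables $y'=y+tv$, which gives $\liminf=-f^\circ(x;-v)=\langle \xi,v\rangle$; and the equi-Lipschitz finite-net upgrade to uniform convergence on compacta. One small wording correction: in the forward identification, the equality $f^\circ(x;v)=\langle D_sf(x),v\rangle$ follows because the limit exists by strict differentiability itself. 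The Lipschitz property does not produce that limit; it is only what makes $f^\circ$ and $\partial f(x)$ defined in the first place.
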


\begin{theorem}[Chain rule]\label{thm:Chain rule}
Let $f=g\circ h$, where $h: X \to \mathbb{R}$ and $g:\mathbb{R}\to \mathbb{R}$. Suppose that $h$ is Lipschitz near $x$, and that $g$ is Lipschitz near $h(x)$.  Suppose also that $g$ is strictly differentiable at $h(x)$. Then we have
\begin{equation*}
    \partial f(x) = 
    \{D_s g(h(x))\,\xi \,: \, \; \xi \in \partial h(x)\}.
\end{equation*}
\end{theorem}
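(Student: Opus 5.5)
The statement is the Chain rule (Theorem \ref{thm:Chain rule}): for $f=g\circ h$ with $h$ Lipschitz near $x$ and $g$ strictly differentiable at $h(x)$, we should have $\partial f(x)=\{D_s g(h(x))\,\xi : \xi\in\partial h(x)\}$. I will work entirely at the level of generalized directional derivatives. Both sides are nonempty, convex, weak-$*$ compact subsets of $X^*$: the right-hand side is a scalar multiple of $\partial h(x)$. Such sets are determined by their support functions. So it suffices to show that for every $v\in X$,
\[
f^{\circ}(x;v)=\max\{\langle D_s g(h(x))\,\xi, v\rangle : \xi\in\partial h(x)\}.
\]
Here I use item (2) of the proposition on properties of $\partial f$ for the left side. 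For the right side, write $s:=D_s g(h(x))\in\mathbb{R}$; its support function equals $s\,h^{\circ}(x;v)$ if $s\ge 0$ and $|s|\,h^{\circ}(x;-v)$ if $s<0$.

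\textbf{Step 1.} Note that $f$ is Lipschitz near $x$, as the composition of Lipschitz maps, so $f^{\circ}$ and $\partial f(x)$ are defined. Then write the difference quotient
\[
\frac{f(y+tv)-f(y)}{t}=\frac{g(h(y+tv))-g(h(y))}{h(y+tv)-h(y)}\cdot\frac{h(y+tv)-h(y)}{t},
\]
interpreting the first factor as $s$ whenever $h(y+tv)=h(y)$. As $y\to x$ and $t\downarrow 0$, the points $h(y)$ and $h(y+tv)$ both tend to $h(x)$, since $h$ is continuous near $x$. Strict differentiability of $g$ at $h(x)$ then forces the first factor to tend to $s$. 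Here I use the one-dimensional form of the definition: $(g(a')-g(a))/(a'-a)\to s$ as $a,a'\to h(x)$ with $a\ne a'$, which follows from Definition \ref{def: stric deriv} with directions $\pm1$ and uniformity. The second factor stays bounded by $K\|v\|$, where $K$ is the Lipschitz rank of $h$.

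\textbf{Step 2.} Write the first factor as $s+\varepsilon(y,t)$ with $\varepsilon(y,t)\to 0$. Since the second factor is bounded, the error term $\varepsilon(y,t)\cdot\frac{h(y+tv)-h(y)}{t}$ vanishes in the limsup. Hence $f^{\circ}(x;v)=\limsup s\,\frac{h(y+tv)-h(y)}{t}$.

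If $s\ge0$, this limsup is $s\,h^{\circ}(x;v)$. If $s<0$, it equals $|s|\limsup\frac{h(y)-h(y+tv)}{t}$. Substituting $y'=y+tv\to x$ rewrites this as $|s|\limsup\frac{h(y'-tv)-h(y')}{t}=|s|\,h^{\circ}(x;-v)$. In both cases the result matches the support function of the right-hand side.

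\textbf{Step 3.} Conclude by the Hahn--Banach separation argument in the weak-$*$ topology. If $\zeta$ belonged to one convex weak-$*$ closed set but not the other, there would be some $v\in X$ separating them, which would contradict equality of the support functions.

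The main obstacle is Step 1: controlling the first factor uniformly as both arguments of $g$ move. In particular, one must handle the degenerate case $h(y+tv)=h(y)$ cleanly; it is handled by the convention above, since then both sides of the factored identity vanish. One must also extract the two-point limit from the definition of the strict derivative.
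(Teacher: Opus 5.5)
The paper does not prove this statement; it quotes it without proof from Clarke's monograph \cite[Chapter 2]{clarke1990optimization}, where it is the strictly differentiable case of the general chain rule. Your argument is correct and is essentially the standard one:

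\begin{itemize}
\item Strict differentiability of $g$ at $h(x)$, used in two-point form, gives the key identity. Direction $+1$ alone suffices here, since the difference quotient is symmetric in $a,a'$.
\item Combined with the bound $|h(y+tv)-h(y)|/t\le K\|v\|$, this yields $f^{\circ}(x;\cdot)=(s\,h)^{\circ}(x;\cdot)$ with $s=D_s g(h(x))$.
\item The sign split via $h^{\circ}(x;-v)=(-h)^{\circ}(x;v)$ then gives $\partial(s\,h)(x)=s\,\partial h(x)$.
\end{itemize}

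The Hahn--Banach step in Step 3 can be dropped. The generalized gradient is defined as the set of all $\zeta$ with $\langle\zeta,v\rangle\le f^{\circ}(x;v)$ for every $v$. Your explicit formula for $f^{\circ}(x;\cdot)$ therefore identifies $\partial f(x)$ with $s\,\partial h(x)$ directly from this definition, in each of the cases $s>0$, $s=0$, $s<0$.
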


\begin{proposition}[Product]\label{prop:product}
Let $f_1, f_2$ be Lipschitz near $x$. Then $f_{1}f_{2}$ is Lipschitz near $x$, and one has
\begin{equation*}
    \partial (f_{1}f_{2})(x) \subset f_{2}(x)\,\partial f_{1}(x) + f_{1}(x)\,\partial f_{2}(x).
\end{equation*}
\end{proposition}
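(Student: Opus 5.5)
The plan is to follow Clarke's standard argument. First we establish the local Lipschitz property, then bound the generalized directional derivative of the product by a support function, and finally convert that bound into a set inclusion by separation.

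\textbf{Step 1 (Lipschitz near $x$).} Let $f_1,f_2$ be Lipschitz of ranks $K_1,K_2$ on $x+\varepsilon B$. Both functions are bounded there, say $|f_i|\le M_i$. For $y,z$ in that neighbourhood we write
\[
f_1f_2(y)-f_1f_2(z)=f_1(y)\big(f_2(y)-f_2(z)\big)+f_2(z)\big(f_1(y)-f_1(z)\big).
\]
This gives rank $M_1K_2+M_2K_1$, so $\partial(f_1f_2)(x)$ is well defined.

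\textbf{Step 2 (estimate of $(f_1f_2)^\circ$).} For $y\to x$, $t\downarrow 0$ we split the difference quotient in the same way:
\[
\frac{f_1f_2(y+tv)-f_1f_2(y)}{t}=f_1(y+tv)\,\frac{f_2(y+tv)-f_2(y)}{t}+f_2(y)\,\frac{f_1(y+tv)-f_1(y)}{t}.
\]
Each quotient is bounded by $K_i\|v\|$, and $f_1(y+tv)\to f_1(x)$, $f_2(y)\to f_2(x)$. Hence we may replace the prefactors by $f_1(x)$ and $f_2(x)$ at the cost of a vanishing error. Subadditivity of $\limsup$ then gives
\[
(f_1f_2)^\circ(x;v)\le \big(f_1(x)f_2\big)^\circ(x;v)+\big(f_2(x)f_1\big)^\circ(x;v).
\]
Next we use the scalar-multiple rule $(sg)^\circ(x;v)=\max\{\langle \zeta,v\rangle:\zeta\in s\,\partial g(x)\}$. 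We check it separately for $s\ge0$, where it is immediate by positive homogeneity, and for $s<0$, where $(sg)^\circ(x;v)=|s|g^\circ(x;-v)$. In both cases we combine it with item (2) of the proposition on $\partial f$, namely $g^\circ(x;v)=\max_{\partial g(x)}\langle\cdot,v\rangle$. The result is $(f_1f_2)^\circ(x;v)\le \sigma_C(v)$, where $\sigma_C$ is the support function of
\[
C:=f_2(x)\,\partial f_1(x)+f_1(x)\,\partial f_2(x).
\]

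\textbf{Step 3 (inclusion).} Let $\xi\in\partial(f_1f_2)(x)$. Then $\langle\xi,v\rangle\le (f_1f_2)^\circ(x;v)\le\sigma_C(v)$ for all $v\in X$. The set $C$ is convex and weak-$*$ compact, since it is a sum of scalar multiples of nonempty convex weak-$*$ compact sets by item (1). If $\xi\notin C$, the Hahn--Banach separation theorem in $(X^*,\text{weak-}*)$ applies: its dual is $X$, so there is $v\in X$ with $\langle\xi,v\rangle>\sigma_C(v)$, which is a contradiction. Hence $\xi\in C$.

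The main obstacle I expect is this final separation step, in particular making sure the separating functional can be taken in $X$ itself, which uses weak-$*$ compactness of $C$. Handling negative scalars $f_i(x)<0$ in Step 2 also needs care; it is resolved by working with the support function of $s\,\partial g(x)$ rather than with $g^\circ$ directly.
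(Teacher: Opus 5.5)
Your proof is correct, and it follows a different route from the source. The paper does not prove this proposition itself; it is quoted from Clarke's monograph (Chapter~2). There the product rule is a special case of the chain rule for $g\circ h$ with $h=(f_1,f_2):X\to\mathbb{R}^2$ and $g(u_1,u_2)=u_1u_2$. Since $g$ is $C^1$, $\partial g(h(x))=\{(f_2(x),f_1(x))\}$, and the inclusion follows at once.

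\textbf{What the chain-rule route buys.} It is a one-liner once the $\mathbb{R}^n$-valued chain rule is available. Its regularity clause also gives equality, and regularity of $f_1f_2$, when $f_1(x),f_2(x)\ge 0$ and both $f_i$ are regular at $x$. The same template handles quotients and other smooth combinations.

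\textbf{What your route buys.} You work directly with $f^\circ$. The exact algebraic splitting of the difference quotient replaces the mean-value step inside the chain-rule proof. Beyond that you use only:
\begin{itemize}
\item the max formula;
\item weak-$*$ compactness of $\partial f_i(x)$;
\item the identity $(-g)^\circ(x;v)=g^\circ(x;-v)$;
\item separation in $(X^*,\sigma(X^*,X))$.
\end{itemize}
In this paper that is a real advantage. The appendix records the chain rule only for scalar inner maps $h:X\to\mathbb{R}$, from which the product rule cannot be read off. Your argument is therefore self-contained relative to the tools the paper actually states.

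The step you flagged as the main obstacle is sound. Weak-$*$ closedness of $C$, which follows from compactness, is exactly what makes the separating functional $\sigma(X^*,X)$-continuous. Such a functional is represented by some $v\in X$, so the contradiction goes through.
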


\section{Examples of Separable Hilbert Spaces}\label{app:examples of H}

A Hilbert space $H$ is called \textit{separable} if it admits a countable dense subset. Equivalently, $H$ possesses a countable orthonormal basis (see, e.g.,  \cite[Problem 17]{halmos2012hilbert}).

The following are standard examples of separable Hilbert spaces arising in analysis.

\subsection{The sequence space \texorpdfstring{$\ell^2$}{l2}}

Recall the space of square-summable sequences 
\[
    \ell^2(\mathbb{N}) = \left\{ x=(x_n)_{n\in\mathbb{N}} : \sum_{n=1}^\infty |x_n|^2 < \infty \right\}.
\]
The inner product is $\langle x,y\rangle = \sum_{n=1}^\infty x_n \overline{y_n}$.  The induced norm is $\|x\|_{\ell^2} = \left( \sum_{n=1}^\infty |x_n|^2 \right)^{1/2}$. The canonical orthonormal basis is $\mathbf{e}_n=(0,\dots,0,1,0,\dots)$, where the $1$ occurs in the $n$-th position. Hence $\ell^2(\mathbb{N})$ is separable. 

Sequence spaces arise naturally in Fourier coefficients, wavelet expansions, and discrete signal processing.

A fundamental theorem (see, e.g., \cite[Theorem 5.4.8, p.130]{hong2004real}) states the following. 
\begin{theorem}
Every infinite-dimensional separable Hilbert space is isomorphic to $\ell^2(\mathbb{N})$.
\end{theorem}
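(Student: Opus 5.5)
The statement is the classical isomorphism theorem: every infinite-dimensional separable Hilbert space $H$ is isometrically isomorphic to $\ell^2(\mathbb{N})$. I would prove it by constructing an explicit unitary map from a countable orthonormal basis.

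\textbf{Step 1: a countable orthonormal basis.} Separability gives a countable dense set $\{y_n\}_{n\in\mathbb{N}}$. I would discard every $y_n$ that lies in the span of its predecessors. This leaves a linearly independent sequence with the same (dense) linear span. Since $H$ is infinite-dimensional, this sequence is infinite.

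\textbf{Step 2: Gram--Schmidt.} Orthonormalizing the sequence produces an orthonormal family $(\mathbf{e}_n)_{n\in\mathbb{N}}$ whose span equals that of the $y_n$, hence is dense. Density implies completeness of the family: if $\langle x,\mathbf{e}_n\rangle=0$ for all $n$, then $x$ is orthogonal to a dense set, so $x=0$.

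\textbf{Step 3: the coefficient map.} I would define
\[
T:H\to\ell^2(\mathbb{N}),\qquad Tx=(\langle x,\mathbf{e}_n\rangle)_{n\in\mathbb{N}}.
\]
Bessel's inequality gives $\sum_n|\langle x,\mathbf{e}_n\rangle|^2\le\|x\|^2$, so $T$ is well defined, linear and bounded. To upgrade this to Parseval's identity, set $s_N=\sum_{n\le N}\langle x,\mathbf{e}_n\rangle\mathbf{e}_n$.
\begin{itemize}
\item The sequence $(s_N)$ is Cauchy in $H$, because $\|s_M-s_N\|^2$ is a tail of a convergent series.
\item By completeness of $H$, $s_N\to s$ for some $s\in H$.
\item By continuity of the inner product, $\langle x-s,\mathbf{e}_n\rangle=0$ for every $n$, so Step 2 gives $x=s$.
\end{itemize}
Hence $\|x\|^2=\sum_n|\langle x,\mathbf{e}_n\rangle|^2$, so $T$ is an isometry, and by polarization it preserves inner products.

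\textbf{Step 4: surjectivity.} Given $(c_n)\in\ell^2(\mathbb{N})$, the partial sums $\sum_{n\le N}c_n\mathbf{e}_n$ are Cauchy in $H$ by orthonormality. They converge to some $x\in H$, and continuity of the inner product gives $Tx=(c_n)$. So $T$ is a unitary isomorphism, as claimed.

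\textbf{Main obstacle.} Nothing here is genuinely hard. The point needing the most care is Step 3: it is where the completeness of $H$ and the maximality (totality) of the basis from Step 2 are both used. It is also where the inequality in Bessel's bound is upgraded to equality, which is exactly what makes $T$ isometric rather than merely bounded.
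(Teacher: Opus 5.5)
Your proof is correct: a countable orthonormal basis via Gram--Schmidt, then the coefficient map $x\mapsto(\langle x,\mathbf{e}_n\rangle)_n$ is unitary by Parseval and Riesz--Fischer; the paper does not prove the statement itself but cites a textbook, and your argument is the standard proof such references give. The one step that deserves an extra line is the claim in Step 1 that the extracted sequence is infinite, which uses that a finite-dimensional subspace is closed, so a dense finite-dimensional span would have to equal $H$.
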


Thus all infinite-dimensional separable Hilbert spaces share the same Hilbert space structure, even though their concrete realizations and interpretations may differ substantially.

\subsection{The space \texorpdfstring{$L^2(\Omega)$}{L2(Omega)}}

Let $(\Omega,\mathcal F,\mu)$ be a $\sigma$-finite measure space. Define
\[
    L^2(\Omega) = \left\{ f:\Omega\to\mathbb{C} \, \text{ measurable} \, : \, \int_\Omega |f(x)|^2\,\dd\mu(x)<\infty \right\},
\]
where functions are identified almost everywhere. The inner product is $\langle f,g\rangle_{L^2} = \int_\Omega f(x)\overline{g(x)}\,\dd\mu(x)$.  Some examples are as follows.  
If $\Omega\subset\mathbb{R}^n$ is measurable and $\mu$ is Lebesgue measure, then $L^2(\Omega)$ is separable. For $\Omega=[0,2\pi]$, the Fourier basis $\left\{ (2\pi)^{-1/2}\,e^{inx} \right\}_{n\in\mathbb Z}$ 
is an orthonormal basis of $L^2([0,2\pi])$. 
Another example is  $L^2(\mathbb{R}^n)$ which plays a central role in Fourier analysis, probability theory, quantum mechanics, signal processing, and PDE theory. In dimension $n=1$, the Hermite functions form a complete orthonormal basis.

\subsection{Sobolev spaces \texorpdfstring{$H^k(\Omega)$}{Hk(Omega)}}

Let $\Omega\subset\mathbb{R}^n$ be an open set and let $k\in\mathbb{N}$. 
The Sobolev space $H^k(\Omega) = W^{k,2}(\Omega)$ is defined by
\[
    H^k(\Omega) = \left\{ f\in L^2(\Omega) \,:\, D^\alpha f\in L^2(\Omega) \text{ for all multi-indices } \alpha \text{ with } |\alpha|\le k \right\},
\]
where $D^\alpha f$ denotes the weak derivative. 
The inner product is
\[
    \langle f,g\rangle_{H^k} = \sum_{|\alpha|\le k} \int_\Omega D^\alpha f(x)\, \overline{D^\alpha g(x)} \,\dd x.
\]
The associated norm is $\|f\|_{H^k}^2 = \sum_{|\alpha|\le k} \|D^\alpha f\|_{L^2(\Omega)}^2$. 
If $\Omega\subset\mathbb{R}^n$ is open, then $H^k(\Omega)$ is a separable Hilbert space. 

Sobolev spaces play a central role for example in partial differential equations, variational methods, and functional analysis.  They provide the natural energy spaces for weak solutions of partial differential equations.

\subsection{The Hardy space \texorpdfstring{$H^2(\mathbb{D})$}{H2(D)}}

Let $\mathbb{D} = \{z\in\mathbb{C}:|z|<1\}$ denote the open unit disk. The Hardy space $H^2(\mathbb{D})$ consists of all holomorphic functions $f:\mathbb{D}\to\mathbb{C}$ such that
\[
    \sup_{0<r<1} \frac{1}{2\pi}\int_0^{2\pi} |f(re^{it})|^2\,\dd t < \infty.
\]
Every $f\in H^2(\mathbb{D})$ admits a power series expansion $f(z) = \sum_{n=0}^\infty a_n z^n$  with $\sum_{n=0}^\infty |a_n|^2<\infty$. The inner product is $\langle f,g\rangle = \sum_{n=0}^\infty a_n\overline{b_n}$,  where $g(z)=\sum_{n=0}^\infty b_n z^n$. The monomials $\{z^n\}_{n\ge0}$ form an orthogonal basis, hence $H^2(\mathbb{D})$ is separable.

Hardy spaces play important roles in complex analysis, operator theory, and control theory, where transfer functions of causal and stable systems naturally belong to Hardy spaces.

\subsection{The Bergman space \texorpdfstring{$A^2(\mathbb{D})$}{A2(D)}}

Let $\mathbb{D}\subset\mathbb{C}$ be the open unit disk. The Bergman space is
\[
    A^2(\mathbb{D}) = \left\{ f:\mathbb{D}\to\mathbb{C} \text{ holomorphic} : \int_{\mathbb{D}}|f(z)|^2\,\dd A(z)<\infty \right\},
\]
where $\dd A(z)=\dd x\,\dd y$ denotes planar Lebesgue measure on $\mathbb{C}\simeq\mathbb{R}^2$. The inner product is $\langle f,g\rangle = \int_{\mathbb{D}} f(z)\overline{g(z)} \,\dd A(z)$.  The normalized monomials $\left\{ \sqrt{(n+1)/{\pi}}\,z^n \right\}_{n\ge0}$ form an orthonormal basis. Therefore $A^2(\mathbb{D})$ is separable.

Bergman spaces arise in complex analysis, potential theory, and quantization.

\subsection{Reproducing kernel Hilbert spaces}

The previous examples arise naturally in classical analysis. Modern applications, particularly in approximation theory, statistics, and machine learning, frequently involve reproducing kernel Hilbert spaces.

Let $X$ be a set and let $K:X\times X\to\mathbb{C}$ be a positive-definite kernel. The associated reproducing kernel Hilbert space (RKHS) $\mathcal{H}_K$ is the unique Hilbert space of functions on $X$ satisfying
\begin{enumerate}
    \item For every $x\in X$, the function $K_x(\cdot):=K(\cdot,x)$ belongs to $\mathcal{H}_K$.
    
    \item (\emph{Reproducing property})
    For all $f\in\mathcal{H}_K$ and all $x\in X$, $f(x) = \langle f,K_x\rangle_{\mathcal{H}_K}$. 
\end{enumerate}
Many RKHSs arising in analysis and machine learning are separable whenever $X$ is a separable topological space and the kernel $K$ is continuous. 

General background on reproducing kernel Hilbert spaces can be found in \cite{aronszajn1950theory,berlinet2011reproducing}.  For Sobolev kernels and approximation-theoretic aspects, see \cite{wendland2004scattered}, while spline methods and their RKHS interpretation are discussed in \cite{wahba1990spline}. Gaussian and Mat\'ern kernels, together with their applications in statistical learning and Gaussian processes, are discussed in \cite{williams2006gaussian}.  Background on Sobolev spaces and Fourier analytic techniques can be found in \cite{stein1970singular}.  A discussion on the separability of RKHS can be found in \cite{owhadi2017separability}, and a comparison with the setting of Banach spaces is the object of \cite{fukumizu2011learning}.

We now present some standard examples of RKHS.

\subsubsection{Gaussian kernel RKHS}

Let $X=\mathbb{R}^d$,  and fix a parameter $\sigma>0$. 
The Gaussian kernel is
\[
    K(x,y) = \exp\left( -\frac{\|x-y\|^2}{2\sigma^2} \right), \qquad x,y\in\mathbb{R}^d.
\]
This kernel is continuous, symmetric, and positive definite. The associated RKHS $\mathcal{H}_K$ consists of very smooth functions on $\mathbb{R}^d$.  
Since $\mathbb{R}^d$ is separable and the kernel is continuous, $\mathcal{H}_K$ is separable. 
Moreover, for translation-invariant kernels such spaces admit a Fourier characterization through Bochner's theorem. In particular, the RKHS norm can often be expressed as a weighted Fourier norm of the form
\[
    \|f\|_{\mathcal H_K}^2 = \int_{\mathbb{R}^d} w(\xi) |\widehat{f}(\xi)|^2\,\dd\xi,
\]
where the weight $w$ depends on the Fourier transform of the kernel.
Gaussian RKHSs are fundamental for example in kernel methods in machine learning, Gaussian process regression, and approximation theory.

\subsubsection{Sobolev RKHSs}

Sobolev spaces can often be realized as RKHSs.  Let $\Omega\subset\mathbb{R}^d$ be an open set and let $s>d/2$. The Sobolev space $H^s(\Omega)$ embeds continuously into the space of continuous functions $H^s(\Omega)\hookrightarrow C^0(\Omega)$.  Consequently, point evaluation $f\mapsto f(x)$ is a bounded linear functional for every $x\in\Omega$. 
By the Riesz representation theorem, there exists a kernel $K(x,y)$ such that $f(x) = \langle f,K_x\rangle_{H^s(\Omega)}$. Hence $H^s(\Omega)$ becomes an RKHS. For $\Omega=\mathbb{R}^d$, one common Sobolev inner product is
\[
    \langle f,g\rangle_{H^s} =  \int_{\mathbb{R}^d} (1+\|\xi\|^2)^s \widehat{f}(\xi) \overline{\widehat{g}(\xi)} \,\dd\xi
\]
where $\widehat{f}$ denotes the Fourier transform of $f$. 
The corresponding reproducing kernel is translation invariant such as $K(x,y)=\Phi(x-y)$, where $\Phi$ is the inverse Fourier transform of $(1+\|\xi\|^2)^{-s}$. 

Sobolev RKHSs are central for example in elliptic PDE theory, scattered data interpolation, approximation theory, and numerical analysis. 

Since open subsets of $\mathbb{R}^d$ are separable metric spaces, $H^s(\Omega)$ is separable. 
Hence, under suitable regularity assumptions on $\Omega$, the space $H^s(\Omega)$ can be realized as an RKHS.

\subsubsection{Mat\'ern kernel RKHSs}

The Mat\'ern class provides an important family of kernels interpolating between rough and very smooth function spaces.

Let $X=\mathbb{R}^d$. Let $\nu>0$ denote the smoothness parameter, and $\ell>0$ denote the length scale. 
The Mat\'ern kernel is defined by
\[
    K_\nu(x,y) = \frac{2^{1-\nu}}{\Gamma(\nu)} \left( \frac{\sqrt{2\nu}\,\|x-y\|}{\ell} \right)^\nu K_\nu^{(B)} \left( \frac{\sqrt{2\nu}\,\|x-y\|}{\ell} \right),
\]
where $\Gamma$ is the Gamma function, and  $K_\nu^{(B)}$ denotes the modified Bessel function of the second kind. Such a kernel is symmetric, positive definite, and  translation invariant. Hence it defines a reproducing kernel Hilbert space $\mathcal{H}_{K_\nu}$. 

Its smoothness properties depends on the parameter $\nu$: small $\nu$ produces rough functions, while large $\nu$ produces smoother functions. Indeed, as $\nu\to\infty$, the Mat\'ern kernel converges to the Gaussian kernel. Special cases include 
\[
    \nu=\frac{1}{2}\qquad \text{ and } \qquad K(x,y)=e^{-\|x-y\|/\ell},
\]
which gives the exponential kernel, and $\nu=\frac{3}{2}$ or $\nu=\frac{5}{2}$, which yield commonly used kernels with simple closed forms. 

It can be shown that the RKHS norm is equivalent to a Sobolev norm
\[
    \|f\|_{\mathcal{H}_{K_\nu}}^2 \sim \int_{\mathbb{R}^d} |\widehat{f}(\xi)|^2 \left( 1+\|\xi\|^2 \right)^{\nu+d/2} \,\dd\xi.
\]
Thus the RKHS norm is equivalent to a Sobolev norm of order $\nu+d/2$, so Mat\'ern RKHSs exhibit Sobolev-type smoothness properties.

Mat\'ern kernels are fundamental for example in Gaussian process regression, spatial statistics, uncertainty quantification, and Bayesian inverse problems. 
Because $\mathbb{R}^d$ is separable and the kernel is continuous, Mat\'ern RKHSs are separable Hilbert spaces.

\subsubsection{Spline spaces}

Spline spaces provide another important class of RKHSs.

Let $a=x_0<x_1<\cdots<x_n=b$ be a partition of an interval $[a,b]\subset\mathbb{R}$. 
A spline of degree $m$ is a polynomial function of degree at most $m$ on each interval $[x_i,x_{i+1}]$, and is moreover globally of class $C^{m-1}$.  A classical example is the cubic spline space corresponding to $m=3$. Many spline spaces arise as RKHSs associated with variational problems. For example, consider the Sobolev space $H^2([a,b])$.  Define the seminorm
\[
    J(f) = \int_a^b |f''(x)|^2\,\dd x.
\]
The minimizers of $J(f)$ under interpolation constraints are cubic splines. 
The associated reproducing kernel generates a spline RKHS. 
Explicit reproducing kernels can be written for many spline spaces, although their precise form depends on the imposed boundary conditions and normalization.

Spline RKHSs are widely used for example in interpolation theory, finite element methods, and  statistical regression. 
These spaces are typically separable because they are subspaces of separable Sobolev spaces.

\bibliography{bibliography}
\end{document}